\newtheorem{theorem}{Theorem}[section]
\newaliascnt{lemma}{theorem}
\newtheorem{lemma}[lemma]{Lemma}
\newaliascnt{proposition}{theorem}
\newtheorem{proposition}[proposition]{Proposition}
\newaliascnt{corollary}{theorem}
\newtheorem{corollary}[corollary]{Corollary}
\newaliascnt{claim}{theorem}
\newtheorem*{theorem*}{Theorem}
\theoremstyle{definition}
\newaliascnt{definition}{theorem}
\newtheorem{definition}[definition]{Definition}
\newtheorem*{definition*}{Definition}
\newaliascnt{notation}{theorem}
\newaliascnt{remark}{theorem}
\newtheorem{remark}[remark]{Remark}
\newaliascnt{remarks}{theorem}
\newtheorem{remarks}[remarks]{Remarks}
\newaliascnt{example}{theorem}
\newaliascnt{examples}{theorem}
\newtheorem*{corollary*}{Corollary}
\newtheorem*{proposition*}{Proposition}
\newtheorem*{remark*}{Remark}
\numberwithin{equation}{section}
\newcommand{\sbt}{\,\begin{picture}(-1,1)(-1,-2.5)\circle*{3}\end{picture}\,\,\, }
\begin{document}
\def\C{\mathbb{C}}
\def\N{\mathbb{N}}
\def\M{\mathrm{M}}
\def\id{\mathrm{id}}
\def\Cstar{\mathrm{C}^*}
\def\CPCstar{\mathrm{CPC}^*}
\def\Lim{\overset{\tikz \draw [arrows = {->[harpoon]}] (-1,0) -- (0,0);}{(F_n,\rho_n)}_n}

\title{Nuclearity and $\CPCstar$-Systems}
 \author[K.\ Courtney]{Kristin Courtney}
     \address{Department of Mathematics and Computer Science,
University of Southern Denmark, Campusvej 55,
DK-5230 Odense M,
Denmark}
\email{kcourtney@imada.sdu.dk}
      \author[W.\ Winter]{Wilhelm Winter}
     \address{Mathematical Institute, University of M\"unster, Einsteinstr.\ 62, 48149 M\"unster, Germany}
     \email{wwinter@uni-muenster.de}
     \subjclass[2020]{46L05} 
 \keywords{Completely positive approximation, nuclearity, inductive limits, order zero maps, completely positive maps, NF systems, NF algebras}
 \thanks{
This research was supported by the Deutsche Forschungsgemeinschaft (DFG, German Research Foundation) under Germany's Excellence Strategy -- EXC 2044 -- 390685587, Mathematics M\"unster -- Dynamics -- Geometry -- Structure, the Deutsche Forschungsgemeinschaft (DFG, German Research Foundation) -- Project-ID 427320536 -- SFB 1442, and ERC Advanced Grant 834267 -- AMAREC
}
\date{\today}

\begin{abstract}
We write arbitrary separable nuclear $\Cstar$-algebras as
limits of inductive systems of finite-dimensional $\Cstar$-algebras with completely positive connecting maps. The characteristic feature of such $\CPCstar$-systems is that the maps become more and more orthogonality preserving. This condition makes it possible to equip the limit, a priori only an operator space, with a multiplication turning it into a $\Cstar$-algebra. Our concept generalizes the NF systems of Blackadar and Kirchberg beyond the quasidiagonal case.
\end{abstract}

\maketitle

\section*{Introduction}
\renewcommand*{\thetheorem}{\Alph{theorem}}
\renewcommand*{\thedefinition}{\Alph{definition}}
\renewcommand*{\theproposition}{\Alph{theorem}}

\noindent 
Nuclear $\Cstar$-algebras are those for which the identity map can be
approximated in point norm topology by completely positive contractions
through finite-dimensional $\Cstar$-algebras. More formally, a $\Cstar$-algebra $A$ has the completely positive approximation property if there are a net $(F_\lambda)_{\lambda\in \Lambda}$ of finite-dimensional $\Cstar$-algebras and completely positive contractive (c.p.c.)\ maps \[A\xlongrightarrow{\psi_\lambda}F_\lambda \xlongrightarrow{\varphi_\lambda} A\] such that $\varphi_\lambda \circ \psi_\lambda$ converges pointwise in norm to the identity map $\id_A$. For separable $\Cstar$-algebras
(the case we are mostly interested in in this paper) one can always 
restrict to systems of approximations with countable index sets.
Upon composing such approximations one obtains a system of the
form 
{\footnotesize
\[
\begin{tikzcd}[cramped]
\hdots &A\arrow{rr}{\id_A}\arrow{dr}{\psi_{n-1}} & & A\arrow{rr}{\id_A}\arrow{dr}{\psi_n} & & A\arrow{rr}{\id_A}\arrow{dr}{\psi_{n+1}} && A 
& 
\hdots 
\\
\hdots && F_{n-1}\arrow{ur}{\varphi_{n-1}}\arrow{rr}{\rho_{n,n-1}}&& F_n\arrow{ur}{\varphi_n}\arrow{rr}{\rho_{n+1,n}} && F_{n+1}\arrow{ur}{\varphi_{n+1}} && \hdots
\end{tikzcd}
\]
}
with commuting lower triangles, $\rho_{n,n-1}\coloneqq \psi_{n} \circ \varphi_{n-1}$, and approximately commuting upper triangles, i.e., $\varphi_n \circ \psi_n \longrightarrow \id_A$ in point-norm topology.
The sequence of upper triangles $(A, \varphi_n \circ \psi_n)_n$ may be regarded as an inductive system in the category of operator spaces with c.p.c.\ maps as morphisms,
and with the inductive limit again being an operator space, more precisely a self-adjoint 
subspace of the sequence algebra
$A_\infty = \prod_n A / \bigoplus_n A$, the quotient of bounded sequences
modulo null sequences in $A$. At least after passing to a subsystem (so that
the approximation errors become summable on larger and larger subsets), this inductive 
limit will agree with the embedding of $A$ into $A_\infty$ as constant sequences, and in particular it will indeed be a $\Cstar$-algebra. 
In this situation it is then reasonable to expect that the lower row 
\[
\begin{tikzcd}
\hdots & F_{n-1}\arrow{r}{\rho_{n,n-1}}& F_n\arrow{r}{\rho_{n+1,n}} & F_{n+1} & \hdots
\end{tikzcd}
\]
also encodes all pertinent information about $A$. This is a tempting 
point of view, since it would allow one to write any separable nuclear $\Cstar$-algebra 
as an inductive limit of finite-dimensional ones in a suitable category:
Such a limit, say $X$,  will be contained in the 
sequence algebra $F_\infty = \prod_n F_n / \bigoplus_n F_n$. It can be described as (the norm closure of) the union $\bigcup_n \rho_n(F_n)$, where $\rho_n\colon F_n \longrightarrow F_\infty$ denotes the limit map for each $n$, and there will be an induced c.p.c.\ isometry $\Psi\colon A \longrightarrow X$. The space $X$ will certainly reflect the linear structure of $A$ and also the order (even after passing to matrix amplifications), since the maps involved are completely positive. It will, however, in general not be a sub-$\Cstar$-algebra of $F_\infty$, and the map $\Psi$ will not be multiplicative. 

\bigskip

Studying operator algebras as inductive limits has a long history, spanning from Murray and von Neumann's work on 
hyperfiniteness in \cite{MvN} to semidiscreteness and Connes' celebrated classification 
of injective $\mathrm{II}_1$ factors (\cite{Con76}) on the von Neumann algebra side, and featuring 
the classification of UHF, AF, AT, and AH algebras on the $\Cstar$-algebra side (see \cite{Ell, Ell93, Gli60}). Inductive limits have been used to construct fundamental $\Cstar$-algebras with prescribed structural, $\mathrm{K}$-theoretic, and tracial data, such as the Jiang--Su Algebra $\mathcal{Z}$ \cite{JS99,RW10} and the Razak--Jacelon Algebra $\mathcal{W}$ \cite{Jac13}. More recently, inductive limits of Cartan pairs as described in \cite{BL20} were used 
in \cite{Li20} to show that all classifiable stably-finite $\Cstar$-algebras admit groupoid models. 
The inductive systems appearing in these results all have $^*$-homomorphisms as connecting maps. This entails that, at least on the $\Cstar$-algebra side, one has to admit more complicated building block algebras than just finite-dimensional ones in order to reach a reasonable bandwidth of examples. (Often one uses algebras built from bundles of finite-dimensional $\Cstar$-algebras over locally compact spaces, which poses limitations on the existence of sufficiently many connecting maps.)

\bigskip

In this paper we explore how to relax conditions on the connecting maps while keeping the building blocks finite-dimensional, hence as easy as possible. Our Ansatz is it to write an arbitrary
separable nuclear $\Cstar$-algebra as an inductive limit of finite-dimensional  $\Cstar$-algebras \emph{in the category 
of operator spaces}. In order to be conceptually 
satisfactory and to give access to applications, such an approach would
have to meet some minimal requirements: One needs to be able to recover
the $\Cstar$-algebra structure from the operator system $X$ (or from the 
inductive system, for that matter) in a practical way, and one 
needs to be able to concisely describe the limit operator spaces
arising in this manner. There are, however, two obstacles to overcome.

First, the connecting maps $\rho_{n,n-1}$ can in general not 
be arranged to be multiplicative, not even approximately: requiring 
multiplicativity would only grant access to AF algebras,
a small albeit interesting subclass of nuclear C*-algebras. In \cite{BK97},
Blackadar and Kirchberg suggested the notion of NF systems, 
which incorporates a certain type of asymptotic
multiplicativity and which covers the substantially larger class of NF algebras. 
While much broader than AF algebras, this class is still far 
from covering all nuclear $\Cstar$-algebras. 
Therefore, we are looking for a notion of inductive systems
of finite-dimensional $\Cstar$-algebras which is sufficiently flexible
in the sense that every separable nuclear $\Cstar$-algebra can be
written as the inductive limit of such a system, while at the same 
time it is rigid enough to encode not only 
the linear and matrix order structure, but also the multiplicative structure 
of an initially given $\Cstar$-algebra. 
It will be important that the algebraic
structure is not only \emph{encoded} in the system, but that it can actually 
be \emph{accessed} and extracted in a meaningful and sufficiently concrete way.

Second, assuming one has a notion of inductive systems as above, 
we need to distinguish them from arbitrary inductive systems of finite-dimensional $\Cstar$-algebras in the category of 
operator spaces. Ideally we will want to derive an \emph{intrinsic} 
characterization of the respective limits. (Theoretically one 
could compare the limit to \emph{every} $\Cstar$-algebra and check for 
the existence of an isomorphism in a suitable sense, but such an 
\emph{extrinsic} characterization would be unsatisfactory both conceptually
and practically, since it would largely obscure any information 
related to the finite-dimensional approximations.) 

The aforementioned notion of NF algebras as introduced by Blackadar and Kirchberg addresses the first obstacle,
and reveals a subtle facet of the second.

\begin{definition*}[\cite{BK97}]
 A system $(F_n,\rho_{n+1,n})_n$ consisting of a sequence $(F_n)_n$ of finite-dimensional $\Cstar$-algebras together with c.p.c.\ connecting maps $\rho_{n+1,n}\colon F_n\longrightarrow F_{n+1}$ is called an \emph{NF system}  if it is asymptotically multiplicative in the sense that for any $\varepsilon>0$, $k\geq 0$, and $x,y\in F_k$, there exists an $M>k$ so that for all $m>n>M$ we have 
    \begin{align}\label{defn*}
    \|\rho_{m,n}\big(\rho_{n,k}(x)\rho_{n,k}(y)\big)-\rho_{m,n}\big(\rho_{n,k}(x)\big)\rho_{m,n}\big(\rho_{n,k}(y)\big)\|<\varepsilon. \tag{a}
    \end{align}
 \end{definition*}

The limit of such a system is indeed a sub-$\Cstar$-algebra of $F_\infty$, 
with product given by
\begin{align}\label{defn**}
\rho_k(x)\rho_k(y)=\lim_n \rho_n\big(\rho_{n,k}(x)\rho_{n,k}(y)\big)\tag{b}
\end{align} 
for any $k\geq 0$ and $x,y\in F_k$. 
Therefore asymptotic multiplicativity of the involved connecting maps 
does encode the multiplicative structure of the limit, thus addressing 
the first obstacle above.

Blackadar and Kirchberg then characterize limits of NF systems,
so called NF algebras, as nuclear and quasidiagonal $\Cstar$-algebras.
While this is a large and highly relevant class of $\Cstar$-algebras, it is
not closed under taking quotients or extensions, which restricts the 
range of applications of NF systems considerably. This is subtly related to the fact that quasidiagonality is an \emph{external} approximation property, characterized extrinsically, in terms of comparison maps 
into other $\Cstar$-algebras.
 
\bigskip

We propose here a generalization of NF systems which 
permits to encode the multiplicative structure of the limit in
terms of the connecting maps between its finite dimensional 
approximands, yet renders access to all separable nuclear 
$\Cstar$-algebras, and therefore allows for an intrinsic 
characterization. As the key feature of our main definition we
ask the connecting maps to become more and more \emph{order zero}, thus replacing the asymptotic multiplicativity of
NF systems. We recall from \cite{WZ09} that, for a c.p.c.\ map, `order zero'  just means `orthogonality preserving', and that order zero c.p.c.\ maps remember a lot of the multiplicative structure through formulae of the type 
$\theta(x) \theta(yz) = \theta(xy)\theta(z)$. In case the domain is unital, one can characterize order zero maps via 
\begin{align}\label{dager}
\theta(1) \theta(yz) = \theta(y)\theta(z).\tag{c}
\end{align}

\begin{definition}[\cref{def: cpcstar}]
A system $(F_n,\rho_{n+1,n})_n$ consisting of a sequence $(F_n)_n$ of finite-dimensional $\Cstar$-algebras along with c.p.c.\ connecting maps $\rho_{n+1,n}\colon F_n\longrightarrow F_{n+1}$ is called a $\CPCstar$-\emph{system} if it is asymptotically order zero in the sense that for any $\varepsilon>0$, $k\geq 0$, and $x,y\in F_k$, there exists an $M>k$ so that for all $m>n,l>M$ we have 
    \begin{align}\label{2dager}
    \|\rho_{m,l}(1_{F_l})\rho_{m,n}\big(\rho_{n,k}(x)\rho_{n,k}(y)\big)-\rho_{m,n}\big(\rho_{n,k}(x)\big)\rho_{m,n}\big(\rho_{n,k}(y)\big)\|<\varepsilon.\tag{d}
    \end{align}
\end{definition}

Note that in view of \eqref{dager} the approximate identity \eqref{2dager} may indeed be viewed as an order zero version of \eqref{defn*}. For inductive limits of such systems, one may still describe a product as in \eqref{defn**}.

\begin{proposition}[\cref{thm: main 1}]
\label{propB}
Let $X\subset F_\infty$ be the limit of a  $\CPCstar$-system $(F_n,\rho_{n+1,n})_n$. 
Then there exists an associative bilinear map $\sbt\colon  X \times X \longrightarrow X$ satisfying
\begin{align*}
    \rho_k(x)\sbt \rho_k(y) & = \lim_n \rho_n(\rho_{n,k}(x) \rho_{n,k}(y))
\end{align*}
for all $k\geq 0$ and $x,y\in F_k$, so that $(X, \sbt)$ is a $\Cstar$-algebra with the involution and norm inherited as a subspace of $F_\infty$. 
\end{proposition}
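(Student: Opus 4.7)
The plan is to define $\sbt$ first on the dense union $\bigcup_{n \geq 0} \rho_n(F_n) \subset X$ via the displayed formula, verify it is well-defined and contractive there, extend it continuously to all of $X \times X$, and finally check the $\Cstar$-axioms. The main technical challenge is establishing that the limit in the formula exists in $F_\infty$.

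Fix $k$ and $x, y \in F_k$, and set $a_n := \rho_n(\rho_{n,k}(x)\rho_{n,k}(y)) \in X$ for $n \geq k$; the bound $\|a_n\|_{F_\infty} \leq \|x\|\|y\|$ is immediate from contractivity of the connecting maps. To show $(a_n)_n$ is Cauchy in $F_\infty$, apply the $\CPCstar$ axiom to $(x,y)$ with error $\varepsilon$, obtaining a threshold $M$. For $n_1, n_2, l > M$ and $m > n_1, n_2, l$, two applications of \eqref{2dager} (one for $n_1$, one for $n_2$) followed by subtraction give
\[
\bigl\|\rho_{m,l}(1_{F_l})\bigl[\rho_{m,n_1}(\rho_{n_1,k}(x)\rho_{n_1,k}(y)) - \rho_{m,n_2}(\rho_{n_2,k}(x)\rho_{n_2,k}(y))\bigr]\bigr\| < 2\varepsilon,
\]
i.e., the $F_\infty$-product $\rho_l(1_{F_l})(a_{n_1} - a_{n_2})$ has norm at most $2\varepsilon$. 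The main obstacle is to remove the prefactor $\rho_l(1_{F_l})$ so as to control $\|a_{n_1} - a_{n_2}\|_{F_\infty}$ itself. I would overcome this by showing that, for $l$ sufficiently large, the positive contraction $\rho_{m,l}(1_{F_l}) \in F_m$ asymptotically acts as a left identity on the range of $\rho_{m,n_i}$ as $m \to \infty$, using standard c.p.c.\ inequalities together with the $\CPCstar$ axiom applied to auxiliary pairs involving the units $1_{F_k}$. Granted this, $(a_n)_n$ is Cauchy in $F_\infty$ and converges to an element of the closed subspace $X$.

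Once the limit exists, well-definedness under a change of representative $(k, x, y)$ follows from $\rho_n \circ \rho_{n,k} = \rho_k$ together with the estimate just established, and bilinearity is inherited from bilinearity of multiplication on each $F_n$. The bound $\|a_n\|_{F_\infty} \leq \|x\|\|y\|$ passes to the limit to give submultiplicativity $\|\rho_k(x) \sbt \rho_k(y)\| \leq \|\rho_k(x)\|\cdot\|\rho_k(y)\|$, permitting a unique continuous extension of $\sbt$ to $X \times X$. Associativity and the involution identity $(a \sbt b)^* = b^* \sbt a^*$ descend from the corresponding relations in each $F_n$ via the limit formula and continuity. The $\Cstar$-identity $\|a \sbt a^*\| = \|a\|^2$ is verified on the dense subspace by applying the formula to $xx^*$ and using $\|\rho_{n,k}(x)\rho_{n,k}(x)^*\| = \|\rho_{n,k}(x)\|^2$ together with the $\CPCstar$ estimate, then propagated to $X$ by continuity.
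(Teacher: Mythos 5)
Your reduction of the existence of $\rho_k(x)\sbt\rho_k(y)$ to showing that $a_n\coloneqq\rho_n(\rho_{n,k}(x)\rho_{n,k}(y))$ is Cauchy is the right start, and your observation that the $\CPCstar$-axiom controls $\|e\,(a_{n_1}-a_{n_2})\|$, where $e=[(\rho_{n+1,n}(1_{F_n}))_n]$, is exactly how the paper begins. But the step you propose for removing the prefactor --- that $\rho_{m,l}(1_{F_l})$ asymptotically acts as a \emph{left identity} on the range of $\rho_{m,n}$ --- is false in general and cannot be repaired. If $e$ acted (even asymptotically) as an identity on the limit $X$, then for $\bar x,\bar y\in X$ one would get $\bar x\bar y=e(\bar x\sbt\bar y)=\bar x\sbt\bar y\in X$, so $X$ would be a sub-$\Cstar$-algebra of $F_\infty$ and the connecting maps would be asymptotically multiplicative; that is precisely the NF situation, whose limits are nuclear and quasidiagonal. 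Since $\CPCstar$-limits realize \emph{all} separable nuclear $\Cstar$-algebras, including non-quasidiagonal ones, $e$ cannot behave as an identity on $X$ in general, and no amount of "standard c.p.c.\ inequalities plus the axiom applied to pairs of units" will produce it. What is actually needed, and what the paper proves as the technical heart of the argument (\cref{lem: uniform order unit}\ref{lemit: isometric}), is the \emph{nondegeneracy} statement: multiplication by $e$ is isometric on $X$, i.e.\ $\|e\bar z\|=\|\bar z\|$ for all $\bar z\in X$. Since $a_{n_1}-a_{n_2}\in X$, this converts your bound on $\|e(a_{n_1}-a_{n_2})\|$ directly into a bound on $\|a_{n_1}-a_{n_2}\|$, without pretending $e$ is a unit. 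Establishing this isometry is genuine work: it uses the asymptotic commutation of $e$ with $X$ (\cref{lem: uniform order unit}\ref{lemit: asym comm}), the order unit property $\|\bar x\|e\geq\bar x$ (\cref{lem: uniform order unit}\ref{lemit: uniform order unit}), and a functional calculus argument inside the commutative $\Cstar$-algebra generated by $e$ and a self-adjoint element of $X$; the non-self-adjoint case in turn uses the $\CPCstar$-axiom once more.

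The same missing fact silently underlies your later steps. Your submultiplicativity claim has a gap: $\|a_n\|\leq\|x\|\,\|y\|$ only yields $\|\rho_k(x)\sbt\rho_k(y)\|\leq\|x\|\,\|y\|$, not $\leq\|\rho_k(x)\|\,\|\rho_k(y)\|$, because $\rho_k$ is merely contractive, not isometric on $F_k$. Likewise the $\Cstar$-identity cannot be read off from $\|\rho_{n,k}(x)\rho_{n,k}(x)^*\|=\|\rho_{n,k}(x)\|^2$ alone, since passing through $\rho_n$ may shrink norms. The paper resolves both at once via the identity $\|\bar x\sbt\bar y\|=\|e(\bar x\sbt\bar y)\|=\|\bar x\bar y\|$, again a consequence of the nondegeneracy, which also supplies the uniqueness ($e\bar z=\bar x\bar y$ determines $\bar z$) used to verify well-definedness, associativity, and compatibility with the involution. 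So the architecture of your proposal is reasonable, but without the isometry lemma it has a hole at its central step, and the specific substitute you suggest would fail for every non-quasidiagonal example the theorem is designed to cover.
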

We denote the $\Cstar$-algebra above by $\Cstar_{\sbt}(X)$ and call it the $\CPCstar$-limit of the system $(F_n,\rho_{n+1,n})_n$.

With a $\Cstar$-algebra structure established, the next question is: When is the limit nuclear? Thanks to Ozawa and Sato's ``one-way-CPAP" from \cite[Theorem 5.1]{Sato2019} (an approximation property which only involves the upwards maps $\varphi_n$), the answer is ``always". Remarkably, while $\CPCstar$-limits readily satisfy the one-way-CPAP, in order to arrive at nuclearity the proof of \cite[Theorem 5.1]{Sato2019} first confirms that $\Cstar_{\sbt}(X)^{**}$ is injective, which by Connes' theorem implies that $\Cstar_{\sbt}(X)$ is nuclear. Hence this result is non-constructive, meaning one does not build a completely positive approximation directly from the system $(F_n,\rho_{n+1,n})_n$. This is in line with Blackadar and Kirchberg's characterization of NF algebras as the nuclear and quasidiagonal $\Cstar$-algebras, which involves the fact that nuclearity passes to quotients, and hence also relies on Connes' theorem and is non-constructive in a similar way.

As a converse, we show that any system of c.p.c.\ approximations of a separable nuclear $\Cstar$-algebra $A$ with \emph{approximately order zero downwards maps} has a subsystem which gives rise to a $\CPCstar$-system with $\CPCstar$-limit isomorphic to $A$. It follows implicitly from \cite{BK97,WZ09} and explicitly from \cite{BCW} that every nuclear $\Cstar$-algebra \emph{does} admit such a system of approximations.

Combining these results, we can now describe all separable nuclear $\Cstar$-algebras as inductive limits of finite-dimensional $\Cstar$-algebras:
\begin{theorem}
\label{theoremC}
For a separable $\Cstar$-algebra $A$ the following are equivalent:
\begin{enumerate}[label=\textnormal{(\roman*)}]
    \item $A$ is nuclear.
    \item $A$ is $^*$-isomorphic to a $\CPCstar$-limit. 
\end{enumerate}
\end{theorem}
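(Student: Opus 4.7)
The plan is to treat the two directions separately; (i)$\Rightarrow$(ii) will carry the bulk of the work.

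For (ii)$\Rightarrow$(i), the strategy is to extract from a $\CPCstar$-system the data needed for the one-way CPAP of \cite[Theorem 5.1]{Sato2019}. Given a $\CPCstar$-limit $\Cstar_{\sbt}(X)$ with structural maps $\rho_n\colon F_n \to \Cstar_{\sbt}(X)$, pick a countable dense subset of $\bigcup_k \rho_k(F_k)$. For each such element $\rho_k(\tilde a)$, the choice $b_n^{a}\coloneqq \rho_{n,k}(\tilde a)\in F_n$ for $n\geq k$ satisfies $\rho_n(b_n^a) = \rho_k(\tilde a)$ exactly, and by the product formula of \cref{thm: main 1} also $\rho_n(b_n^a b_n^{a'}) \to \rho_k(\tilde a)\sbt \rho_k(\tilde a')$. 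A diagonal selection over a countable dense set then delivers precisely the hypothesis of \cite[Theorem 5.1]{Sato2019}, yielding nuclearity.

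For (i)$\Rightarrow$(ii), I would begin with c.p.c.\ approximations $A \xlongrightarrow{\psi_n} F_n \xlongrightarrow{\varphi_n} A$ with $\varphi_n\circ\psi_n \to \id_A$ pointwise and $\psi_n$ asymptotically order zero, provided by \cite{BCW} (extracting what is implicit in \cite{BK97,WZ09}). Set $\rho_{n+1,n}\coloneqq \psi_{n+1}\circ\varphi_n$ and pass to a sufficiently fast subsystem so that all relevant approximation defects are summable along exhausting finite subsets. Verification of the $\CPCstar$-condition \eqref{2dager} proceeds by first replacing each $\rho_{n,k}(x)$ with $\psi_n(\varphi_k(x))$ modulo a controllable error (iterating $\varphi_j\psi_j\approx\id$ for $k\leq j<n$), then applying the approximate order-zero identity $\psi_n(a)\psi_n(b) \approx \psi_n(1)\psi_n(ab)$ in the spirit of \eqref{dager} (in the unitization if $A$ is nonunital), and finally absorbing the factor $\psi_n(1)$ into the term $\rho_{m,l}(1_{F_l})$ on the left side of \eqref{2dager}.

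To identify the resulting $\CPCstar$-limit with $A$, I would define $\Psi\colon A\to \Cstar_{\sbt}(X)$ by sending $a$ to the class of $(\psi_n(a))_n$ in $F_\infty$. Since $\varphi_m\psi_m\to\id_A$, the sequence $(\rho_{m,k}\psi_k(a)-\psi_m(a))_{m\geq k}$ is norm-null for each fixed $k$, so $\rho_k(\psi_k(a)) = \Psi(a)$ in $F_\infty$, placing $\Psi(a)$ in $X$. Isometry follows from sandwiching $\limsup_n\|\psi_n(a)\|$ between $\|a\|\geq \|\psi_n(a)\|$ and $\|a\|=\lim_n\|\varphi_n\psi_n(a)\|$. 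Surjectivity comes from $\Psi(\varphi_k(x)) = \rho_k(x)$ for $x\in F_k$ (since $\rho_{m,k}(x) - \psi_m\varphi_k(x)\to 0$ as $m\to\infty$), so the range of $\Psi$ contains the dense set $\bigcup_k\rho_k(F_k)$ and is closed by isometry.

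The main obstacle is multiplicativity, $\Psi(ab) = \Psi(a)\sbt\Psi(b)$. Using $\Psi(a) = \rho_n(\psi_n(a))$ together with \cref{thm: main 1}, one reduces the problem to showing $\rho_n(\psi_n(a)\psi_n(b)) \to \Psi(ab)$ in $F_\infty$, and since $\rho_{m,n}(\psi_n(a)\psi_n(b))\approx \psi_m\varphi_n(\psi_n(a)\psi_n(b))$ for $m$ large along the subsystem, it suffices to establish $\varphi_n(\psi_n(a)\psi_n(b))\to ab$ in $A$. This is the technical crux: the approximate order-zero identity gives $\psi_n(a)\psi_n(b)\approx \psi_n(1)\psi_n(ab)$, and a Cauchy--Schwarz estimate $\|\varphi_n((1_{F_n}-\psi_n(1))\psi_n(ab))\|\leq \|\varphi_n(1_{F_n}-\psi_n(1))\|^{1/2}\|\psi_n(ab)\|$ — where $\varphi_n(1_{F_n}-\psi_n(1))\to 0$ follows from $\varphi_n(1_{F_n})\to 1$ and $\varphi_n\psi_n(1)\to 1$ — upgrades this to $\varphi_n(\psi_n(a)\psi_n(b))\approx \varphi_n\psi_n(ab)\to ab$. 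Propagating all these asymptotic defects simultaneously along the exhausting finite subsets used to pass to the subsystem is the most delicate step of the proof.
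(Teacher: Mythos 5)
Your direction (ii)$\Rightarrow$(i) is essentially the paper's own argument: it feeds the maps $\Theta^{-1}\circ\rho_n$ into Ozawa--Sato's one-way CPAP, and the details you sketch (lifting elements of the limit and a diagonal argument) are routine, granted \cref{thm: order zero coi} so that these maps are c.p.c.\ into $\Cstar_{\sbt}(X)$. The problems are in (i)$\Rightarrow$(ii). First, several of your ``exact'' identities are false as stated and the error bookkeeping built on them does not close. For fixed $k$, the sequence $\big(\rho_{m,k}(\psi_k(a))-\psi_m(a)\big)_{m}$ is \emph{not} norm-null: $\rho_{m,k}\psi_k(a)\approx\psi_m(\varphi_k\psi_k(a))$, and $\|\varphi_k\psi_k(a)-a\|$ is a fixed error that does not decay as $m\to\infty$; likewise $\Psi(\varphi_k(x))\neq\rho_k(x)$ in general. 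The same defect undermines your verification of \eqref{eq: asym perp}: replacing $\rho_{n,k}(x)$ by $\psi_n(\varphi_k(x))$ ``by iterating $\varphi_j\psi_j\approx\id$'' leaves an error of order $\sum_{j>k}\varepsilon_j$ with $k$ \emph{fixed}, and the $\CPCstar$-condition demands an error below an arbitrary $\varepsilon>0$ for that fixed $k$, which this scheme cannot deliver. The paper's device is different: it sets $a=\lim_m\varphi_m(\rho_{m,k}(x))$ (not $\varphi_k(x)$) and uses that $\rho_{n,k}(x)=\psi_n(\varphi_{n-1}\rho_{n-1,k}(x))$, so $\|\rho_{n,k}(x)-\psi_n(a)\|\to 0$ exactly; your argument needs this substitution (or an equivalent) to work.

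Second, and more seriously, your pivotal identity $\psi_n(a)\psi_n(b)\approx\psi_n(1)\psi_n(ab)$ has no meaning when $A$ is nonunital, and the parenthetical ``in the unitization'' does not repair it: the canonical unital extension of an (approximately) order zero map sending $1$ to $1_{F_n}$ is not approximately order zero -- by \cref{cor: order zero for unital}(i) the correct unit image is the structure element $h$, which lives in $\mathcal{M}(\Cstar(\Psi(A)))\subset \Cstar(\Psi(A))^{**}$, not in $F_\infty$. Producing a sequence $(h_n)_n\in\prod F_n$ that implements $h$ against $\Psi(A)$ is exactly the content of \cref{prop: vN quotient} (lifting $h$ through the idealizer of $\Cstar(\Psi(A))$ in $F_\infty$ via multiplier-algebra surjectivity), and the subsequent Claims~1--4 of \cref{thm: main 2} are what let one trade $\rho_{m,l}(1_{F_l})$ for $h_m$ when ``absorbing the factor $\psi_n(1)$.'' Your sketch contains no substitute for this step, so as written it only covers the unital case (where $h=\Psi(1_A)$ and no lifting is needed). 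Finally, note that your direct verification of multiplicativity of $\Psi$ is avoidable: the paper identifies $A$ with $\Cstar_{\sbt}(X)$ by showing $\Theta^{-1}\circ\Psi$ is a surjective complete order isomorphism and invoking the fact that such maps between $\Cstar$-algebras are automatically $^*$-isomorphisms (\cref{rmk: folklore 1}(i), \cref{prop: CPC*-algebras'}); this sidesteps the Cauchy--Schwarz manipulation with $\psi_n(1)$, which again presupposes a unit.
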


\cref{propB} shows how to extract the multiplicative structure from a $\CPCstar$-limit, thus addressing the first of the two issues raised above. As for the second, our description of $\CPCstar$-systems and the characterization of their limits is indeed an intrinsic one, not requiring any comparison with $\Cstar$-algebras outside. In upcoming work, we will explore how to apply this notion practically, in particular how to describe permanence properties of nuclearity, and how to access K-theory in terms of $\CPCstar$-systems.

\bigskip

The article is arranged as follows.
\cref{sect: preliminaries} establishes results on c.p.c.\ and order zero maps that will be used throughout. Some results in this section are well-known and some are new. In \cref{sect: cpc systems} we define $\CPCstar$-systems and prove (ii)$\Longrightarrow$(i) of \cref{theoremC}. \cref{sect: cpap} 
is dedicated to proving the reverse implication, and \cref{sect: NF} relates $\CPCstar$-systems to NF systems.

\bigskip

\noindent \text{\bf Acknowledgement:} We are grateful to Jamie Gabe for helpful comments on a draft of this paper and to the referee for their careful reading and helpful comments.


\renewcommand*{\thetheorem}{\roman{theorem}}
\numberwithin{theorem}{section}
\renewcommand*{\thedefinition}{\roman{definition}}
\numberwithin{definition}{section}
\renewcommand*{\theproposition}{\roman{proposition}}
\numberwithin{proposition}{section}
\section{Order zero maps and order embeddings}\label{sect: preliminaries}

\noindent
In this section we highlight some properties of completely positive maps, with a particular focus on order zero maps and on complete order embeddings. 

We write $A_+$ for the cone of positive elements of a $\Cstar$-algebra $A$, $A^1$ and $A^1_+$ for the respective closed unit balls, $X' \cap A\subset M$ for the relative commutant in $A$ of a set $X$ (with $A$ and $X$ both contained in a larger $\Cstar$-algebra $M$), $CA = C_0((0,1],A)$ for the cone, $\M_r(A)$ for $r \times r$ matrices over $A$, and $\mathcal{M}(A)$ for the multiplier algebra of $A$ (regarded as a subalgebra of the double dual $A^{**}$).

\begin{definition}
Let $A$ and $B$ be $\Cstar$-algebras with self-adjoint subspaces $X\subset A$, $Y\subset B$, and $\theta \colon X\longrightarrow Y$ a linear map. We say $\theta$ is \emph{positive} if $\theta(x)\in Y\cap B_+$ for all $x\in X\cap A_+$. We say it is \emph{completely positive} (c.p.) if this holds for all matrix amplifications $\theta^{(r)}\colon \M_r(X)\longrightarrow \M_r(Y)$. 
We say $\theta$ is \emph{completely contractive} if 
$\sup_{r\geq 1} \|\theta^{(r)}\|\leq 1$.
A map which is completely positive and completely contractive is referred to as completely positive contractive (c.p.c.). 
\end{definition}

\begin{remark}\label{rmk: cpc is cc}
When $1_A\in X$ is a \emph{unital} self-adjoint subspace of a unital $\Cstar$-algebra $A$ (what is usually called an operator (sub)system), then for any c.p.\ map $\theta \colon X\longrightarrow Y$, we have 
\begin{align*}\label{eq: norm of cp map}
    \|\theta\|\leq  \sup_{r\geq 1}\|\theta^{(r)}\|\leq \|\theta(1_A)\|.
\end{align*}
 It follows that any completely positive and contractive map from a unital self-adjoint subspace of a $\Cstar$-algebra is automatically completely contractive. The respective statement holds when $A$ is a (not necessarily unital) $\Cstar$-algebra. 
\end{remark}

A particularly special class of completely positive maps are the orthogonality preserving or \emph{order zero} maps. At the end of this section, we will highlight several properties that order zero maps share with $^*$-homomorphisms and that may fail for general c.p.c.\ maps.

\begin{definition}
A c.p.\ map $\theta\colon A\longrightarrow B$ between $\Cstar$-algebras is \emph{order zero} if it maps orthogonal elements to orthogonal elements, or equivalently if for any $a,b\in A_+$, 
\[ab=0\ \Longrightarrow\ \theta(a)\theta(b)=0.\]
\end{definition}
Examples of c.p.\ order zero maps include $^*$-homomorphisms, but there exist c.p.\ order zero maps which are not multiplicative, such as the canonical embedding $\iota\colon A\longrightarrow CA$ of a $\Cstar$-algebra into its cone.   
Nonetheless, order zero maps are remarkably close to $^*$-homomorphisms, as is made evident by the structure theorem for c.p.\ order zero maps \cite[Theorem 3.3]{WZ09}. The following rendition combines elements of \cite[Theorem 3.3]{WZ09} and \cite[Proposition 3.2]{WZ09}.

\begin{theorem}[Structure theorem for order zero maps \cite{WZ09}]\label{thm: structure thm}
Let $A$ and $B$ be $\Cstar$-algebras, $\theta\colon A\longrightarrow B$ a c.p.\ order zero map, $C=\Cstar(\theta(A))$, $\{u_\lambda\}_\lambda$ an increasing approximate identity for $A$, and $h = \emph{s.o.\text{-}}\lim_\lambda \theta(u_\lambda)\\ \in C^{**}$. Then  $0\leq h\in \mathcal{M}(C)\cap C'\subset C^{**}$ 
with $\|h\|=\|\theta\|$, and there exists a $^*$-homomorphism $\pi_\theta\colon A\longrightarrow \mathcal{M}(C)$ so that for all $a\in A$, 
\[\theta(a)=h\pi_\theta(a)=h^{1/2}\pi_\theta(a)h^{1/2}.\]
\end{theorem}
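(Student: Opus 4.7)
The natural strategy is to reduce the theorem to the universal property of the cone $CA = C_0((0,1], A)$: every c.p.\ order zero map $\theta \colon A \longrightarrow B$ lifts uniquely through the canonical order zero embedding
\[\iota_A \colon A \longrightarrow CA, \quad a \mapsto (t \mapsto ta)\]
to a $^*$-homomorphism $\rho \colon CA \longrightarrow B$ with $\rho \circ \iota_A = \theta$. This correspondence (the Wolff/Winter--Zacharias dictionary) is really the heart of the matter; its proof uses the order zero hypothesis to verify that the assignment $g^n \otimes a \mapsto h^{n-1} \theta(a)$ (with $g(t) = t$ and $h$ the candidate limit of $\theta(u_\lambda)$) extends multiplicatively from $C_0((0,1]) \odot A$ to $CA$.

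Once $\rho$ is in hand, its image generates $C = \Cstar(\theta(A))$, and $\rho$ is nondegenerate since $\rho(g \otimes u_\lambda) = \theta(u_\lambda)$ and such elements span a dense $^*$-subalgebra of $C$. Hence $\rho$ extends uniquely to a strictly continuous, unital $^*$-homomorphism $\bar\rho \colon \mathcal{M}(CA) \longrightarrow \mathcal{M}(C) \subset C^{**}$. In $\mathcal{M}(CA)$ the multipliers $g \otimes 1$ and $1 \otimes a$ commute pairwise, so the definitions
\[h := \bar\rho(g \otimes 1), \quad \pi_\theta(a) := \bar\rho(1 \otimes a)\]
yield a $^*$-homomorphism $\pi_\theta \colon A \longrightarrow \mathcal{M}(C)$ with $[h, \pi_\theta(a)] = 0$ for every $a$, and
\[\theta(a) = \rho(g \otimes a) = \bar\rho\big((g \otimes 1)(1 \otimes a)\big) = h \pi_\theta(a) = h^{1/2} \pi_\theta(a) h^{1/2}\]
follows immediately. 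Since $h$ commutes with each generator $\theta(a)$ of $C$, we obtain $h \in \mathcal{M}(C) \cap C'$.

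Finally I would identify $h$ with $\textrm{s.o.-}\lim_\lambda \theta(u_\lambda)$: $\theta(u_\lambda) = \bar\rho(g \otimes u_\lambda)$ and $g \otimes u_\lambda \longrightarrow g \otimes 1$ strictly in $\mathcal{M}(CA)$, which passes under $\bar\rho$ to strong operator convergence in $C^{**}$. The norm equality $\|h\| = \|\theta\|$ then follows from the factorisation $\theta(a) = h \pi_\theta(a)$ together with $\|\pi_\theta\| \leq 1$ on one hand, and from $\|h\| = \sup_\lambda \|\theta(u_\lambda)\| \leq \|\theta\|$ on the other.

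The main obstacle is the universal property of the cone, i.e., promoting orthogonality-preservation of $\theta$ to genuine multiplicativity of $\rho$ on $CA$; once this is secured, the remainder of the proof is routine bookkeeping inside $\mathcal{M}(CA)$. A possible alternative route avoiding multipliers is to extend $\rho$ directly to a normal $^*$-homomorphism $\rho^{**}\colon (CA)^{**} \longrightarrow C^{**}$ and perform the same calculations there, but the strictly continuous extension to multipliers is cleaner and gives the $\mathcal{M}(C)$-valued statement for free.
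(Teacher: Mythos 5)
The paper does not prove this statement at all --- it is recalled verbatim from \cite{WZ09} --- so your attempt has to stand on its own, and as written it does not: it is circular at the decisive point. You reduce everything to the ``universal property of the cone'', i.e.\ to the claim that a c.p.\ order zero map $\theta\colon A\longrightarrow B$ induces a $^*$-homomorphism $\rho\colon CA\longrightarrow B$ with $\rho(g\otimes a)=\theta(a)$, where $g=\mathrm{id}_{(0,1]}$. But in \cite{WZ09} that correspondence (their Corollary 4.1, which the present paper itself invokes in the proof of \cref{prop: isom perp is coe}) is a \emph{consequence} of the structure theorem, not an independent input. Your sketch of how to obtain it --- check that $g^n\otimes a\mapsto h^{n-1}\theta(a)$ extends multiplicatively from $C_0((0,1])\odot A$ to $CA$ --- already presupposes exactly what the theorem asserts: that $h=\mathrm{s.o.\text{-}}\lim_\lambda\theta(u_\lambda)$ is a well-defined positive element of $\mathcal{M}(C)\cap C'$, and that the identities $\theta(a)\theta(b)=h\,\theta(ab)$, more generally $\theta(a_1)\cdots\theta(a_n)=h^{n-1}\theta(a_1\cdots a_n)$, hold. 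Deriving these from mere orthogonality preservation is the entire analytic content of the result; in \cite{WZ09} it is done by a genuine argument in $C^{**}$ (commutation of $h$ with $\theta(A)$, the support projection of $h$ and functional calculus, building on Wolff's theorem on disjointness preserving maps), and nothing in your proposal replaces that step. You flag it yourself as ``the main obstacle'', but flagging it is not overcoming it.

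The surrounding bookkeeping is correct and would finish the proof once the cone correspondence is actually established: $\rho(CA)=\mathrm{C}^*(\theta(A))=C$, so $\rho$ is nondegenerate and extends strictly to $\bar\rho\colon\mathcal{M}(CA)\longrightarrow\mathcal{M}(C)$; setting $h:=\bar\rho(g\otimes 1)$ and $\pi_\theta(a):=\bar\rho(1\otimes a)$ gives commuting elements with $\theta(a)=h\pi_\theta(a)=h^{1/2}\pi_\theta(a)h^{1/2}$ and $h\in\mathcal{M}(C)\cap C'$; strict convergence $g\otimes u_\lambda\to g\otimes 1$ on the bounded net identifies $h$ with $\mathrm{s.o.\text{-}}\lim_\lambda\theta(u_\lambda)$; and the two inequalities giving $\|h\|=\|\theta\|$ are fine. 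So the route is viable in principle, but only if you supply a direct proof, from the order zero hypothesis alone, that the assignment on $C_0((0,1])\odot A$ is well defined and multiplicative --- and that is the theorem itself in disguise.
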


\begin{remarks}\label{cor: order zero for unital}
(i) (\cite[Proposition 3.2]{WZ09}) When $A$ is unital, we have $h=\theta(1_A)$, and in general the map $\theta^{\sim}\colon A^{\sim}\longrightarrow \mathcal{M}(C)$ given by $\theta^{\sim}(a+\lambda 1_{A^\sim})=\theta(a)+\lambda h$ is the unique c.p.\ order zero extension of $\theta$. (By $A^{\sim}$ we denote the minimal unitization of $A$, so that $A^{\sim}=A$ when $A$ is unital.) In any case, $h\geq \theta(a)$ for all $a\in A_+^1$. Note that for $\theta^{\sim}$ to be order zero, one can replace $h$ with $1_{\mathcal{M}(C)}$ only if $\theta$ is in fact multiplicative, since u.c.p.\ order zero maps are automatically $^*$-homomorphisms. 

(ii) The structure theorem for order zero maps gives a useful characterization of when a c.p.\ map with unital domain is order zero: 
    Assume $A$ and $B$ are $\Cstar$-algebras with $A$ unital. Then a c.p.\ map $\theta\colon A\longrightarrow B$ is order zero if and only if $\theta(a)\theta(b)=\theta(1_A)\theta(ab)$ for all $a,b\in A$.

\end{remarks}

\begin{definition}
Let $A$ and $B$ be $\Cstar$-algebras and $X\subset A$, $Y\subset B$ two self-adjoint subspaces. We say a linear map $\theta \colon X\longrightarrow Y$ is a \emph{complete order embedding} if $\theta$ is c.p.\ and completely isometric 
with c.p.\ inverse $\theta^{-1}\colon \theta(X)\longrightarrow A$, i.e., for all $r\geq 1$, $\theta^{(r)}\colon \M_r(X)\longrightarrow \M_r(Y)$ is isometric and $x\in \M_r(X)\cap \M_r(A)_+ \Longleftrightarrow \theta^{(r)}(x)\in \M_r(Y)\cap \M_r(B)_+$ for all $x\in \M_r(X)$. 
A surjective complete order embedding is called a \emph{complete order isomorphism}. 
\end{definition}

\begin{remarks}\label{rmk: folklore 1}
(i) A complete order isomorphism between $\Cstar$-algebras is automatically 
a $^*$-isomorphism (cf.\ \cite[Theorem II.6.9.17]{Bla06}).

(ii) If $X=A$ and $\theta\colon A\longrightarrow Y\subset B$ is c.p.\ and completely isometric, then it is automatically a complete order embedding: 
If $\theta(a)\geq 0$ for some $0\neq a\in A^1$, then 
$a=a^*$ since $\theta$ is injective and $^*$-linear. If $a_-\neq 0$, then for 
$b\coloneqq\frac{\|a\|}{\|a_-\|}a_-+a_+$ 
we have
\begin{align*}
    \|a\|+\|a_-\|&=\|b-a\|\\
    &=\|\theta(b)-\theta(a)\|\\
    &\leq \| \|\theta(b)\|1_{B^\sim}-\theta(a)\|\\
    &=\|\|\theta(a)\|1_{B^\sim}-\theta(a)\|\\
    &\leq \|\theta(a)\|\\
    &=\|a\|,
\end{align*}
a contradiction, so $a$ was positive. The respective argument applies when $a$ lives in some matrix algebra over $A$, whence indeed the inverse of $\theta$ is completely positive. 

(iii) An isometric c.p.\ map $\theta\colon A\longrightarrow B$ between $\Cstar$-algebras which is \emph{not} surjective and \emph{not} unital may fail to be a complete order embedding -- even if it has a c.p.\ inverse $\theta^{-1}\colon \theta(A)\longrightarrow A$. 
We will work out such an example in forthcoming work.
The proposition below shows that for all order zero maps, this issue does not occur. We will use this in \cref{thm: order zero coi}.
\end{remarks}

\begin{proposition}\label{prop: isom perp is coe}
Suppose $\theta\colon A\longrightarrow B$ is a c.p.\ order zero map between $\Cstar$-algebras $A$ and $B$. Then $\theta$ is isometric if and only if it is a complete order embedding. 
\end{proposition}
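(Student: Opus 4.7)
The backward direction is trivial since any complete order embedding is in particular isometric. The substance is the forward direction: assuming $\theta$ is c.p.\ order zero and isometric, one needs to show that it is a complete order embedding. By \cref{rmk: folklore 1}(ii), this reduces to showing that $\theta$ is completely isometric, i.e., that each matrix amplification $\theta^{(r)}\colon \M_r(A)\longrightarrow\M_r(B)$ is isometric.

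The plan is to apply the structure theorem (\cref{thm: structure thm}) to $\theta$ in order to obtain a $^*$-homomorphism $\pi\colon A\longrightarrow\mathcal{M}(C)$ and a positive $h\in\mathcal{M}(C)\cap\pi(A)'$ of norm one with $\theta(a)=h\pi(a)=\pi(a)h$. Amplifying, one has $\theta^{(r)}(x)=(h\otimes 1_r)\pi^{(r)}(x)$; since $h\otimes 1_r$ commutes with the image of $\pi^{(r)}$, a short calculation will yield the crucial identity
\[\theta^{(r)}(x)^*\theta^{(r)}(x)=(h\otimes 1_r)^2\pi^{(r)}(x^*x)=\theta^{(r)}\big((x^*x)^{1/2}\big)^2.\]
This identity reduces the task to proving isometry of $\theta^{(r)}$ on $\M_r(A)_+$, because then $\|\theta^{(r)}(x)\|^2=\|\theta^{(r)}((x^*x)^{1/2})\|^2=\|(x^*x)^{1/2}\|^2=\|x\|^2$.

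To handle positive $y\in\M_r(A)_+$, I will exploit the elementary observation that for any unit vector $v\in\C^r$, linearity of $\theta$ gives $v^*\theta^{(r)}(y)v=\theta(v^*yv)$, whence $\|v^*\theta^{(r)}(y)v\|=\|v^*yv\|$ by the isometry of $\theta$. Invoking the slice-map characterization of the norm on the minimal tensor product $\M_r(A)\cong A\otimes_{\min}\M_r$, which for positive elements reduces to $\|z\|=\sup_{\|v\|=1}\|v^*zv\|$, taking the supremum over $v$ will yield $\|\theta^{(r)}(y)\|=\|y\|$.

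The main obstacle is precisely the jump from positive to arbitrary elements: because $\theta$ is not multiplicative, $\theta^{(r)}(x)^*\theta^{(r)}(x)$ is not $\theta^{(r)}(x^*x)$. The order zero structure theorem saves the day by absorbing the unwanted extra factor of $h\otimes 1_r$ into a square, which is the trick that reduces the general case back to the positive case already handled by the slice argument.
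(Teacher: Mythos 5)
The reduction to positive matrices via the structure theorem is sound: since $h$ commutes with $\pi_\theta(A)$, your identity $\theta^{(r)}(x)^*\theta^{(r)}(x)=\theta^{(r)}\big((x^*x)^{1/2}\big)^2$ does hold, and it correctly reduces complete isometry to isometry of each $\theta^{(r)}$ on $\M_r(A)_+$. The gap is in the positive case: the formula $\|z\|=\sup_{\|v\|=1}\|v^*zv\|$ for $z\in\M_r(A)_+$, with $v$ a unit vector in $\C^r$ and $v^*zv\in A$, is \emph{not} a valid description of the norm on $\M_r(A)$ unless $A$ is commutative (or $r=1$). Concretely, take $A=\M_2$, $r=2$, and let $z\in\M_2(\M_2)$ be the rank-one projection onto the maximally entangled unit vector $\tfrac{1}{\sqrt2}(e_1\otimes e_1+e_2\otimes e_2)$; then $\|z\|=1$, while for every unit vector $v\in\C^2$ one computes $v^*zv=\tfrac12\,ww^*$ for some unit vector $w\in\C^2$, so $\sup_{\|v\|=1}\|v^*zv\|=\tfrac12$. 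Scalar compressions (equivalently, slice maps $\id_A\otimes\omega$ by states of $\M_r$) only give lower bounds for the norm and can be uniformly far from it. Since knowing $\|\theta(a)\|=\|a\|$ for $a\in A$ says nothing a priori about positive matrices over $A$, the positive case is exactly where the difficulty of the proposition sits, and your argument does not establish it.

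For comparison, the paper avoids this by a different mechanism: assuming $B=\Cstar(\theta(A))$, it writes $\theta=\hat\theta\circ\iota$ with $\hat\theta\colon CA\longrightarrow B$ a surjective $^*$-homomorphism from the cone, passes to the quotient by $\hat\theta(SA)$, and uses a pure state attaining $\|\theta(a)\|$ (whose restriction to $C_0((0,1])\odot A$ has the form $\mathrm{ev}_t\otimes\rho_A$, with isometry forcing $t=1$) to show that the induced $^*$-homomorphism $\varphi\colon A\longrightarrow B/\hat\theta(SA)$ is injective; then $\varphi^{-1}\circ\pi\circ\theta=\id_A$ exhibits $\theta$ as completely isometric, because $\varphi^{-1}$ and $\pi$ are completely contractive. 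To salvage your route you would need an argument showing $\|(h\otimes 1_r)\pi_\theta^{(r)}(y)\|\geq\|y\|$ for all $y\in\M_r(A)_+$, which is essentially the full strength of the statement rather than a consequence of isometry on $A$ alone.
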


\begin{proof}
A complete order embedding is by definition isometric, so we only need to show that an isometric c.p.\ order zero map $\theta\colon A\to B$ is a complete order embedding; by Remark \ref{rmk: folklore 1}(ii) this reduces to showing that $\theta$ is completely isometric. 
Without loss of generality, we assume $B=\Cstar(\theta(A))$. Let $\iota\colon A\longrightarrow CA$ denote the canonical embedding $a\mapsto \id_{(0,1])}\otimes a$ of $A$ into its cone. Then by \cite[Corollary 4.1]{WZ09}, $\theta$ induces a surjective $^*$-homomorphism $\hat{\theta}:CA\longrightarrow B$ so that $\hat{\theta} \circ\iota =\theta$. 

Set $\pi\colon B\longrightarrow B/\hat{\theta}(SA)$ and $\hat{\pi}\coloneqq \pi\circ\hat{\theta}:CA\longrightarrow B/\hat{\theta}(SA)$ where $SA\coloneqq C_0((0,1))\otimes A \cong C_0((0,1),A)$ denotes the suspension of $A$. 
Then there exists a surjective $^*$-homomorphism $\varphi\colon A\longrightarrow B/\hat{\theta}(SA)$ so that $\hat{\pi}=\varphi\circ \text{ev}_1$. 
Altogether, we have the commutative diagram
\[
\begin{tikzcd}
A\arrow[swap]{d}{\iota}\arrow{r}{\theta} & B \arrow[two heads]{d}{\pi} \\
CA\arrow[two heads]{ur}{\hat{\theta}}\arrow[two heads]{r}{\hat{\pi}} \arrow[two heads, swap]{d}{\text{ev}_1} & B/\hat{\theta}(SA)\\
A\arrow[two heads]{ur}{\varphi}[swap]{^*\text{-hom}} &
\end{tikzcd}.
\]
We claim that $\varphi$ is injective. 
If so, then we have $\varphi^{-1}\circ \pi \circ \theta =\id_A$, and hence $(\varphi^{-1})^{(r)}\circ \pi^{(r)} \circ \theta^{(r)} =\id_A^{(r)}$ for each $r\geq 1$. Since $\varphi^{-1}$ and $\pi$ are in this case completely contractive, it will then follow that $\theta$ must be completely isometric. 

It suffices to check injectivity on $A_+$, and so we fix $a\in A_+$ with $\|a\|=1$. Let $\rho\in S(B)$ be a pure state on $B$ with $\|\theta(a)\|=\rho(\theta(a))$. Then $\rho\circ\hat{\theta}$ is a pure state on $CA$ and hence its restriction to the algebraic tensor product $C_0((0,1])\odot A$ is of the form $\text{ev}_t\otimes \rho$ for some $t\in (0,1]$ and some (pure) state $\rho_A$ on $A$. Since 
\[1=\|a\|=\|\theta(a)\|=\rho\circ\hat{\theta}(\id_{(0,1]}\otimes a)=t\rho_A(a)\leq \rho_A(a)\leq \|a\|,\]
we must have $t=1$. In particular, $SA\subset \text{ker}(\rho\circ\hat{\theta})$, and hence $\hat{\theta}(SA)\subset \text{ker}(\rho)$. Since $\rho(\theta(a))=1$, we conclude that $\theta(a)\notin \hat{\theta}(SA)$, and so $\varphi(a)\neq 0$. 
\end{proof}


\section{CPC*-systems and their limits}\label{sect: cpc systems}
\noindent Recall from Remark \ref{cor: order zero for unital}(ii) that for $\Cstar$-algebras $A$ and $B$ with $A$ unital a c.p.c.\ map $\theta\colon A\longrightarrow B$ is order zero precisely when  $\theta(a)\theta(b)=\theta(1_A)\theta(ab)$ for all $a,b\in A$. The following definition gives an asymptotic version of this characterization.

\begin{definition}\label{def: limit}
Given a sequence of $\Cstar$-algebras $(F_n)_n$ we let 
\[F_\infty\coloneqq \textstyle{\prod F_n/\bigoplus F_n}\] be the quotient $\Cstar$-algebra of norm bounded sequences modulo null  sequences, denoting elements of the quotient in the form $[(x_n)_n]$.

If in addition we have c.p.c.\ connecting maps $\rho_{n+1,n}\colon F_n\longrightarrow F_{n+1}$ for all $n\geq 0$, we set $\rho_{m,n}\coloneqq \rho_{m,m-1}\circ\hdots\circ\rho_{n+1,n}$ and $\rho_{n,n}=\id_{F_n}$ for all $m>n\geq 0$, and define induced maps $\rho_n\colon F_n\longrightarrow F_\infty$ by  
\begin{align*}
       \rho_n(x)=[(\rho_{m,n}(x))_{m>n}]
\end{align*}
for all $x\in F_n$ and define the inductive limit of the system $(F_n, \rho_{n+1,n})_n$ to be the closed self-adjoint subspace 
\begin{align*}
    \Lim &\coloneqq \overline{\textstyle{\bigcup_n}\rho_n(F_n)}\subset F_\infty. 
\end{align*}
\end{definition}

In general such an inductive limit exists in the category of operator spaces; in order to equip it with the structure of a $\Cstar$-algebra one needs additional hypotheses.

\begin{definition}\label{def: cpcstar}
Let $(F_n)_n$ be a sequence of finite-dimensional $\Cstar$-algebras together with c.p.c.\ maps $\rho_{n+1,n}\colon F_n\longrightarrow F_{n+1}$ for all $n\geq 0$. We call $(F_n,\rho_{n+1,n})_n$ a \emph{$\CPCstar$-system} if for any $k\geq 0$, $x,y\in F_k$, and $\varepsilon>0$, there exists $M>k$ so that for all $m>n,l>M$, 
\begin{align}\label{eq: asym perp}
    \|\rho_{m,l}(1_{F_l})\rho_{m,n}(\rho_{n,k}(x)\rho_{n,k}(y))-\rho_{m,k}(x)\rho_{m,k}(y)\|<\varepsilon.
\end{align} 
In this situation, we call the limit $\Lim$ a $\CPCstar$-limit.
\end{definition}

The aim of this section is to prove that a $\CPCstar$-limit can be turned, in a very robust sense, into a $\Cstar$-algebra (\cref{thm: main 1}), denoted by $\Cstar_{\sbt}\big(\Lim \big)$, which is nuclear (\cref{thm: nuclear}), and such that the map $\Theta\coloneqq \id_{\Lim }\colon \Cstar_{\sbt}\big(\Lim \big)\longrightarrow F_\infty$ is an order zero complete order embedding (\cref{thm: order zero coi}). 
We begin by accumulating some tools and observations.

Systems as in \cref{def: limit} involve c.p.c.\ maps, hence they are clearly compatible with taking matrix amplifications. It is slightly less obvious that the process of lifting individual elements of the inductive limit is also compatible with the system in the following sense.

\begin{lemma}\label{lem: limit of a system}
Let $(F_n,\rho_{n+1,n})_n$ be a system as in \emph{\cref{def: limit}}. 
        For any $r\geq 1$ and $\bar{x}\in \M_r\big( \Lim \big)$, if $(x_n)_n\in \prod \M_r(F_n)$ is some lift of $\bar{x}$, then $\lim_n\|\bar{x}-\rho_n^{(r)}(x_n)\|=0$. 
\end{lemma}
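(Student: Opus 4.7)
My plan is to reduce to the scalar case $r=1$ and then use a density-plus-$3\varepsilon$ argument anchored on elements of the form $\rho_k(y)$.

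First I would handle the reduction to $r=1$. Matrix amplification is an isometric inclusion $\mathrm{M}_r(F_\infty)\hookrightarrow \prod \mathrm{M}_r(F_n)/\bigoplus \mathrm{M}_r(F_n)$, and the maps $\rho_{n+1,n}^{(r)}$ are still c.p.c. Since $\mathrm{M}_r(\overline{\bigcup_n\rho_n(F_n)})=\overline{\bigcup_n \rho_n^{(r)}(\mathrm{M}_r(F_n))}$, the matrix-amplified system has limit $\mathrm{M}_r(X)$ where $X\coloneqq\Lim$. So without loss of generality $r=1$.

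Next I would prove the statement for $\bar{x}$ in the dense union $\bigcup_k \rho_k(F_k)$. Suppose $\bar{x}=\rho_k(y)$ for some $y\in F_k$. Then the sequence $(\rho_{n,k}(y))_{n>k}$ is itself a lift of $\bar{x}$, so any other lift $(x_n)_n$ must satisfy $\lim_n\|x_n-\rho_{n,k}(y)\|=0$. For $m>n>k$, compatibility of the connecting maps gives $\rho_{m,k}(y)=\rho_{m,n}(\rho_{n,k}(y))$, so by contractivity of $\rho_{m,n}$,
\[
\|\rho_{m,n}(x_n)-\rho_{m,k}(y)\|\;\leq\;\|x_n-\rho_{n,k}(y)\|.
\]
Taking $\limsup_{m\to\infty}$ yields $\|\rho_n(x_n)-\bar{x}\|\leq \|x_n-\rho_{n,k}(y)\|\to 0$.

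For the general case, fix $\bar{x}\in X$ and $\varepsilon>0$. By density, choose $k$ and $y\in F_k$ with $\|\bar{x}-\rho_k(y)\|<\varepsilon$. A routine check shows $\rho_n(\rho_{n,k}(y))=\rho_k(y)$ in $F_\infty$ for any $n\geq k$ (the two defining sequences differ in only finitely many entries). Writing
\[
\bar{x}-\rho_n(x_n)\;=\;\bigl(\bar{x}-\rho_k(y)\bigr)+\rho_n\bigl(\rho_{n,k}(y)-x_n\bigr),
\]
contractivity of $\rho_n$ gives $\|\bar{x}-\rho_n(x_n)\|\leq \varepsilon + \|\rho_{n,k}(y)-x_n\|$. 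The sequence $(\rho_{n,k}(y)-x_n)_n$ (extended arbitrarily for $n\leq k$) is a lift of $\rho_k(y)-\bar{x}$, whose norm is less than $\varepsilon$, so $\limsup_n\|\rho_{n,k}(y)-x_n\|<\varepsilon$. Hence $\limsup_n\|\bar{x}-\rho_n(x_n)\|\leq 2\varepsilon$, and since $\varepsilon$ was arbitrary we are done.

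I do not expect any substantive obstacle here; this is essentially a density/contractivity argument. The only mild care needed is the identification $\rho_n(\rho_{n,k}(y))=\rho_k(y)$ and the bookkeeping that $\limsup_m$-norm of a representative sequence equals the $F_\infty$-norm, so that residues from a representative for $\bar{x}-\rho_k(y)$ genuinely become small.
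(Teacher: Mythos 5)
Your proposal is correct and follows essentially the same route as the paper: both proofs combine density of $\bigcup_n \rho_n(F_n)$, contractivity of the connecting maps, and the fact that the quotient norm in $F_\infty$ is the $\limsup$ of the norms of any lift, differing only in bookkeeping (the paper estimates $\|\rho_{m,n}(x_n)-x_m\|$ to show approximate coherence of the lift, while you bound $\|\bar{x}-\rho_n(x_n)\|$ directly). The reduction to $r=1$ is also handled the same way, by observing the argument only uses coherence and complete contractivity, which persist under matrix amplification.
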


\begin{proof}
Consider $r=1$ and fix $\bar{x}\in  \Lim $, a lift $(x_n)_n\in \prod F_n$ of $\bar{x}$, and $\varepsilon>0$. 
Let $y_{k_j}\in F_{k_j}$ so that $\bar{x}=\lim_j\rho_{k_j}(y_{k_j})$. Since the maps are coherent, we can assume for simplicity that the $k_j$ are increasing and set $y_n\coloneqq  \rho_{n,k_j}(y_{k_j})$ for $k_j<n<k_{j+1}$. 
Then we have $\lim_n\rho_n(y_n)= \bar{x}$. 
Now fix $k>0$ so that $\|\rho_k(y_k)-\bar{x}\|<\varepsilon/4$. 
Then we may choose $M>k$ so that $\|\rho_{n,k}(y_k)-x_n\|<\varepsilon/2$ for all $n>M$. Then for all $m>n>M$, 
\begin{align*}
    \|\rho_{m,n}(x_n)-x_m\|&\leq \|\rho_{m,n}(x_n-\rho_{n,k}(y_k))\| + \|\rho_{m,k}(y_k)-x_m\|\\&\leq \|x_n-\rho_{n,k}(y_k)\| + \|\rho_{m,k}(y_k)-x_m\|\\
    &<\varepsilon.
\end{align*}
It follows that $\lim_n \rho_n(x_n)=[(x_n)_n]=\bar{x}$. 

Note that the argument for $r=1$ uses only that the maps $\rho_{m,n}$ are coherent and c.p.c., which is also true for their matrix amplifications, and so  the exact same proof goes through for $r\geq 1$. 
\end{proof}

A key tool for us will be a distinguished matrix order unit\footnote{This terminology is justified by \cref{lem: uniform order unit}\ref{lemit: uniform order unit}.} for the limit of a $\CPCstar$-system $(F_n,\rho_{n+1,n})_n$, which we define to be 
\begin{align}
    e&\coloneqq [(\rho_{n+1,n}(1_{F_n}))_n]\in F_\infty. \label{def: order unit}
\end{align}
Note that we do not in general assume that $e$ is an element of $\Lim $. That latter situation would correspond to $\Cstar_{\sbt}\big(\Lim \big)$ being unital (with $e=\lim_n\rho_n(1_{F_n})$), in which case \eqref{eq: asym perp} can be slightly simplified by eliminating the variable $l$.

We also set 
\begin{align*}
    e^{(r)}\coloneqq [(\rho^{(r)}_{n+1,n}(1_{\M_r(F_n)})_n]\in \M_r(F_\infty),\ \text{ for }\ r\geq 1.
\end{align*}
\begin{remark}\label{lemit: asym perp}
Note that \eqref{eq: asym perp} implies that for any $k\geq 0$ and $x,y\in F_k$, we have 
   \[\rho_k(x)\rho_k(y)=\lim_n e \rho_n\big(\rho_{n,k}(x)\rho_{n,k}(y)\big).\] 
\end{remark}
The following lemma will allow us to interpret $e$ as a commuting nondegenerate order unit for $\Lim$.

\begin{lemma}\label{lem: uniform order unit}
Let $(F_n,\rho_{n+1,n})_n$ be a  $\CPCstar$-system, and let $e$ be as defined in \eqref{def: order unit}.
\begin{enumerate}[label=\textnormal{(\roman*)}]
\item For each $k\geq 0$, $x\in F_k$, and $\varepsilon>0$, there exists $M>k$ so that for all $m>n>M$, 
    \[\|\rho_{m,n}(1_{F_n})\rho_{m,k}(x)-\rho_{m,k}(x)\rho_{m,n}(1_{F_n})\|<\varepsilon.\] 
    In particular, $e\in \big(\Lim \big)'$. \label{lemit: asym comm} 
        \item For any $r\geq 1$ and any self-adjoint $\bar{x}\in \M_r(\Lim )$, we have $\|\bar{x}\|e^{(r)}\geq \bar{x}$ in $\M_r(F_\infty)$. \label{lemit: uniform order unit}
    \item For any $\bar{x}\in \Lim $ and any $j\geq 1$, we have 
$\|e^j\bar{x}\|=\|\bar{x}\|$. In particular, for any $\bar{x},\bar{y}\in \Lim $ and $j\geq 1$, if $e^j\bar{x}=e^j\bar{y}$, then $\bar{x}=\bar{y}$. \label{lemit: isometric}
\end{enumerate} 
\end{lemma}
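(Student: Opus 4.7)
My plan is to prove the three parts in the order (ii), (i), (iii), since (iii) will follow from (ii) by a $2\times 2$ matrix trick that also relies on the commutation $e\in(\Lim)'$ established in (i).

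For (ii), I would reduce to finite-level positivity. Using \cref{lem: limit of a system}, I take a bounded self-adjoint lift $(x_n)_n\in\prod\M_r(F_n)$ of $\bar{x}$ with $\|x_n\|\to\|\bar{x}\|$, which can be arranged by truncating near the quotient norm. Positivity of each $\rho_{m,n}^{(r)}$ applied to $\|x_n\|\cdot 1-x_n\ge 0$ gives $\|x_n\|\rho_{m,n}^{(r)}(1)\ge\rho_{m,n}^{(r)}(x_n)$ in $\M_r(F_m)$. Combined with the monotonicity chain $\rho_{m,m-1}^{(r)}(1)\ge\rho_{m,n}^{(r)}(1)$ in $\M_r(F_m)$---which follows from $1_{F_{m-1}}\ge\rho_{m-1,n}(1_{F_n})$ and positivity of $\rho_{m,m-1}$---this lifts to $\|x_n\|e^{(r)}\ge\rho_n^{(r)}(x_n)$ in $\M_r(F_\infty)$. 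Passing to the limit $n\to\infty$, and using that the positive cone is closed, yields $\|\bar{x}\|e^{(r)}\ge\bar{x}$.

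For (i), the plan is to apply \eqref{eq: asym perp} at level $k$ with $y=1_{F_k}$ in both orderings $(x,1_{F_k})$ and $(1_{F_k},x)$, setting the auxiliary index $l=n$ so that $\rho_{m,l}(1_{F_l})=\rho_{m,n}(1_{F_n})$. This produces two approximate identities relating $\rho_{m,n}(1_{F_n})\cdot\rho_{m,n}(\rho_{n,k}(x)\rho_{n,k}(1_{F_k}))$ to $\rho_{m,k}(x)\rho_{m,k}(1_{F_k})$ and its mirror. The main obstacle, and where most of the work will go, is that naive subtraction yields a commutator involving $\rho_{m,k}(1_{F_k})$ rather than $\rho_{m,n}(1_{F_n})$: the positive gap $\rho_{m,n}(1_{F_n})-\rho_{m,n}(\rho_{n,k}(1_{F_k}))=\rho_{m,n}(1_{F_n}-\rho_{n,k}(1_{F_k}))$ need not be small a priori, and must be absorbed. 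I anticipate handling this by combining \eqref{eq: asym perp} at varying auxiliary $l$---exploiting the monotonicity $\rho_{m,l}(1_{F_l})\ge\rho_{m,l'}(1_{F_{l'}})$ for $l\ge l'$---with positivity manipulations in $F_m$. Once the commutator bound is in hand, specializing to $n=m-1$ makes the $m$-th coordinate of $[e,\rho_k(x)]$ tend to zero in $F_\infty$, and density of $\bigcup_k\rho_k(F_k)$ in $\Lim$ then gives $e\in(\Lim)'$.

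For (iii), the bound $\|e^j\bar{x}\|\le\|\bar{x}\|$ is immediate from $\|e\|\le 1$. For the reverse, I use a $2\times 2$ matrix trick. The self-adjoint element $T\coloneqq\bigl(\begin{smallmatrix}0&\bar{x}^*\\ \bar{x}&0\end{smallmatrix}\bigr)\in\M_2(\Lim)$ has norm $\|\bar{x}\|$, so (ii) applied with $r=2$ gives $\|\bar{x}\|e^{(2)}\ge T$ in $\M_2(F_\infty)$. By Hahn--Banach there exists a state $\Omega$ on $\M_2(F_\infty)$ with $\Omega(T)=\|\bar{x}\|$. The positivity $\|\bar{x}\|\Omega(e^{(2)})\ge\Omega(T)=\|\bar{x}\|$ then forces $\Omega(e^{(2)})=1$, whence $\Omega((1-e^{(2)})^2)=0$ (using $(1-e^{(2)})^2\le 1-e^{(2)}$), and Cauchy--Schwarz for $\Omega$ yields $\Omega(c(1-e^{(2)}))=0$ for every $c\in\M_2(F_\infty)$. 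Iterating gives $\Omega(T)=\Omega(T(e^{(2)})^j)$ for all $j\ge 1$. Invoking (i) to commute $e^j$ past $\bar{x}$ and $\bar{x}^*$, the product $T(e^{(2)})^j$ becomes the self-adjoint anti-diagonal matrix with entries $e^j\bar{x}$ and $(e^j\bar{x})^*$, whose norm equals $\|e^j\bar{x}\|$. Therefore $\|\bar{x}\|=|\Omega(T(e^{(2)})^j)|\le\|e^j\bar{x}\|$. The ``in particular'' claim follows by applying the equality to $\bar{x}-\bar{y}$.
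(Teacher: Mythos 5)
Your part (i) is not a proof but an acknowledged open step, and the route you choose does not contain the idea that makes (i) work. Applying \eqref{eq: asym perp} with $y=1_{F_k}$ (in either order, with $l=n$) only compares $\rho_{m,n}(1_{F_n})\,\rho_{m,n}\big(\rho_{n,k}(x)\rho_{n,k}(1_{F_k})\big)$ with the products $\rho_{m,k}(x)\rho_{m,k}(1_{F_k})$ and $\rho_{m,k}(1_{F_k})\rho_{m,k}(x)$; everything you can extract this way involves $\rho_{m,k}(1_{F_k})$, whereas the statement concerns the commutator with $\rho_{m,n}(1_{F_n})$. The positive gap $\rho_{m,n}(1_{F_n})-\rho_{m,k}(1_{F_k})$ is genuinely not small: if $\sup_{n>k}\|\rho_n(1_{F_n})-\rho_k(1_{F_k})\|$ tended to $0$, the sequence $(\rho_n(1_{F_n}))_n$ would converge in norm and force the limit to be unital, so in the non-unital case this gap stays bounded away from zero along suitable $k<n$. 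Monotonicity of $\rho_{m,l}(1_{F_l})$ in $l$ and positivity give no control whatsoever on a commutator $[\rho_{m,n}(1_{F_n})-\rho_{m,k}(1_{F_k}),\,\rho_{m,k}(x)]$, so the ``absorption'' you defer is exactly the content of (i), and your listed tools do not supply it. The paper's proof uses a different mechanism: take $x\in(F_k)_+^1$ and apply \eqref{eq: asym perp} with $y=x$ and $l=n$, so that the approximated product $\rho_{m,k}(x)^2$ is \emph{self-adjoint}; comparing the resulting estimate with its adjoint shows $\rho_{m,n}(1_{F_n})$ approximately commutes with $\rho_{m,k}(x)^2$, and a uniform polynomial approximation of $t\mapsto t^{1/2}$ on $[0,1]$ transfers this to $\rho_{m,k}(x)$ (then $n=m-1$ and density give $e\in(\Lim)'$). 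Some such self-adjointness/functional-calculus step (or a substitute) is missing from your proposal.

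Parts (ii) and (iii) are fine on their own terms. Your (ii) is essentially the paper's argument (the paper reduces to $\bar x=\rho_k(x)$ via \cref{lem: limit of a system} instead of lifting, but the estimates are the same). Your (iii) is a genuinely different and attractive route: the $2\times 2$ anti-diagonal matrix plus a norming state, the forcing $\Omega(e^{(2)})=1$, and Cauchy--Schwarz replace the paper's two-step argument (the $C_0(\Omega)$ picture for self-adjoint elements, followed by a Cauchy-sequence argument producing $\bar z\in\Lim$ with $e\bar z=\bar x^*\bar x$), and it avoids \cref{lemit: asym perp} entirely; do note that a state with $\Omega(T)=\|\bar x\|$ (not merely $|\Omega(T)|=\|T\|$) exists because $\sigma(T)$ is symmetric, and that both the self-adjointness of $T(e^{(2)})^j$ and the identification of its norm with $\|e^j\bar x\|$ use $e\in(\Lim)'$. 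Consequently (iii), as you have arranged it, inherits the gap in (i).
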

\begin{proof}
For \ref{lemit: asym comm}, we fix $k\geq 0$, $x\in F_k$, and $\varepsilon>0$; without loss of generality, it suffices to assume $x\in (F_k)_+^1$. 
It follows from a standard functional calculus argument using polynomial approximations of $t\mapsto t^{1/2}$ on $[0,1]$
that we are finished when we find for any $\eta>0$ an $M>k$ so that for all $m>n>M$, 
\begin{align}\label{eq: *}
    \|\rho_{m,n}(1_{F_n})\rho_{m,k}(x)^2-\rho_{m,k}(x)^2\rho_{m,n}(1_{F_n})\|<\eta.
\end{align}
To that end, we utilize \eqref{eq: asym perp} to choose $M>k$ so that for all $m>n>M$
\begin{align}\label{eq: 6eta}
\|\rho_{m,n}(1_{F_n})\rho_{m,n}(\rho_{n,k}(x)^2)-\rho_{m,k}(x)^2\|<\eta/6.\end{align}
Then we have

  \vspace{-.25 cm}

\begin{align*}
    &\|\rho_{m,n}(1_{F_n})\rho_{m,k}(x)^2-\rho_{m,k}(x)^2\rho_{m,n}(1_{F_n})\|\\
    &\overset{\phantom{\eqref{eq: 6eta}}}{\leq}   \|\rho_{m,n}(1_{F_n})\rho_{m,k}(x)^2-\rho_{m,n}(1_{F_n})^2\rho_{m,n}(\rho_{n,k}(x)^2)\| \\
    & \phantom{\overset{\eqref{eq: 6eta}}{<}} +    \|\rho_{m,n}(1_{F_n})^2\rho_{m,n}(\rho_{n,k}(x)^2)-\rho_{m,n}(1_{F_n})\rho_{m,n}(\rho_{n,k}(x)^2)\rho_{m,n}(1_{F_n})\|\\
   & \phantom{\overset{\eqref{eq: 6eta}}{<}} +    \|\rho_{m,n}(1_{F_n})\rho_{m,n}(\rho_{n,k}(x)^2)\rho_{m,n}(1_{F_n})-\rho_{m,n}(\rho_{n,k}(x)^2)\rho_{m,n}(1_{F_n})^2\|\\
   &\phantom{\overset{\eqref{eq: 6eta}}{<}} +\|\rho_{m,n}(\rho_{n,k}(x)^2)\rho_{m,n}(1_{F_n})^2-\rho_{m,k}(x)^2\rho_{m,n}(1_{F_n})\|\\
    &\overset{\phantom{\eqref{eq: 6eta}}}{\leq}  \|\rho_{m,n}(1_{F_n})\|\|\rho_{m,k}(x)^2-\rho_{m,n}(1_{F_n})\rho_{m,n}(\rho_{n,k}(x)^2)\| \\
    & \phantom{\overset{\eqref{eq: 6eta}}{<}} +    \|\rho_{m,n}(1_{F_n})\|\|\rho_{m,n}(1_{F_n})\rho_{m,n}(\rho_{n,k}(x)^2)-\rho_{m,n}(\rho_{n,k}(x)^2)\rho_{m,n}(1_{F_n})\|\\
   & \phantom{\overset{\eqref{eq: 6eta}}{<}} +    \|\rho_{m,n}(1_{F_n})\rho_{m,n}(\rho_{n,k}(x)^2)-\rho_{m,n}(\rho_{n,k}(x)^2)\rho_{m,n}(1_{F_n})\|\|\rho_{m,n}(1_{F_n})\|\\
   &\phantom{\overset{\eqref{eq: 6eta}}{<}} +\|\big(\rho_{m,n}(\rho_{n,k}(x)^2)\rho_{m,n}(1_{F_n})-\rho_{m,k}(x)^2\big)^*\|\|\rho_{m,n}(1_{F_n})\|\\
    &\leq 2\|\rho_{m,k}(x)^2-\rho_{m,n}(1_{F_n})\rho_{m,n}(\rho_{n,k}(x)^2)\|\\
    &\phantom{\overset{\eqref{eq: 6eta}}{<}}  +    2\|\rho_{m,n}(1_{F_n})\rho_{m,n}(\rho_{n,k}(x)^2)-\rho_{m,n}(\rho_{n,k}(x)^2)\rho_{m,n}(1_{F_n})\| \\
   &\overset{\eqref{eq: 6eta}}{<}  \eta/3+  2\|\rho_{m,n}(1_{F_n})\rho_{m,n}(\rho_{n,k}(x)^2) - \rho_{m,k}(x)^2\|\\
    &\phantom{\overset{\eqref{eq: 6eta}}{<}}  + 2\|\rho_{m,k}(x)^2-\rho_{m,n}(\rho_{n,k}(x)^2)\rho_{m,n}(1_{F_n})\|\\
    &\overset{\phantom{\eqref{eq: 6eta}}}{=}  \eta/3 +  2\|\rho_{m,n}(1_{F_n})\rho_{m,n}(\rho_{n,k}(x)^2) - \rho_{m,k}(x)^2\|\\
   &\phantom{\overset{\eqref{eq: 6eta}}{<}}  + 2\|\big(\rho_{m,k}(x)^2-\rho_{m,n}(\rho_{n,k}(x)^2)\rho_{m,n}(1_{F_n})\big)^*\|\\
     &\overset{\phantom{\eqref{eq: 6eta}}}{=} \eta/3+4\|\rho_{m,n}(1_{F_n})\rho_{m,n}(\rho_{n,k}(x)^2) - \rho_{m,k}(x)^2\|\\
    &\overset{\eqref{eq: 6eta}}{<}\eta, 
\end{align*}
which establishes \eqref{eq: *} and hence \ref{lemit: asym comm}.



For the order unit condition \ref{lemit: uniform order unit}, we first consider $r=1$. For any self-adjoint $\bar{x}\in \Lim $ and any self-adjoint lift $(x_n)_n\in \prod F_n$ of $\bar{x}$, \cref{lem: limit of a system} 
tells us $\bar{x}=\lim_n\rho_n(x_n)$. 
Hence, it suffices to prove the claim for $\rho_k(x)$ for any fixed $k\geq 0$ and any self-adjoint $x\in F_k$. For each $n>k$, we have $\|\rho_{n,k}(x)\|1_{F_n}\geq \rho_{n,k}(x)$, and so $\|\rho_{n,k}(x)\|\rho_n(1_{F_n})\geq \rho_n(\rho_{n,k}(x))=\rho_k(x)$. Since all the maps are c.p.c., it follows that $e\geq \rho_n(1_{F_n})$ for all $n\geq 0$, and hence $\|\rho_{n,k}(x)\| e \geq \rho_k(x)$ for all $n>k$. Moreover $(\|\rho_{n,k}(x)\|)_{n>k}$ is bounded and non-increasing, which means
$\|\rho_k(x)\|=\limsup_n\|\rho_{n,k}(x)\|=\lim_n \|\rho_{n,k}(x)\|$, and so 
\[\|\rho_k(x)\|e=\lim_n \|\rho_{n,k}(x)\| e \geq \rho_k(x).\]
Since the argument for $r=1$ uses only \cref{lem: limit of a system} and that the maps are c.p.c., we may repeat the same argument for $r\geq 1$.

For the nondegeneracy condition \ref{lemit: isometric}, we assume $\Lim \neq \{0\}$ (otherwise the claim is trivial). We first consider the case where $\bar{x}\in \Lim $ is self-adjoint, and we assume moreover that $\|\bar{x}\|=1$. By possibly replacing $\bar{x}$ with $-\bar{x}$, we may assume $1\in \sigma(\bar{x})$, the spectrum of $\bar{x}$. From \ref{lemit: asym comm}, we know $e$ and $\bar{x}$ commute, and so  
we may identify $\Cstar(e,\bar{x})\subset F_\infty$ with $C_0(\Omega)$ for some locally compact Hausdorff space $\Omega\subset \sigma(e)\times \sigma(\bar{x})\subset [0,1]\times [-1,1]$. Then \ref{lemit: uniform order unit} implies $s\geq t$ for all $(s,t)\in \Omega$.  
Since we assumed $1\in \sigma(\bar{x})$, it follows that  $(1,1)\in \Omega$, and so for any $m\geq 1$,
\[1=\|\bar{x}\|\geq \|e^m\bar{x}\|=\sup_{(s,t)\in \Omega} |s^m t|\geq 1.\]

Now, we handle the general case of any $\bar{x}\in \Lim $. Because $\bigcup_n \rho_n(F_n)$ is dense in $\Lim $, it suffices to check that \ref{lemit: isometric} holds for any fixed $\bar{x}=\rho_k(x)$ with $k\geq 0$ and $x\in F_k$. 
From \cref{lemit: asym perp}, we know that 
\begin{align}
    \rho_k(x)^*\rho_k(x)=\lim_n e \rho_n\big(\rho_{n,k}(x)^*\rho_{n,k}(x)\big).\label{eq: **}
\end{align}
Since $\rho_m\big(\rho_{m,k}(x)^*\rho_{m,k}(x)\big)-\rho_n\big(\rho_{n,k}(x)^*\rho_{n,k}(x)\big)\in \Lim$ is self-adjoint for all $m>n>k$, we already know that
\begin{align*}
   &\|e \rho_m\big(\rho_{m,k}(x)^*\rho_{m,k}(x)\big) - e \rho_n\big(\rho_{n,k}(x)^*\rho_{n,k}(x)\big)\|\\
   &= \|\rho_m\big(\rho_{m,k}(x)^*\rho_{m,k}(x)\big) - \rho_n\big(\rho_{n,k}(x)^*\rho_{n,k}(x)\big)\|,
\end{align*}
which together with \eqref{eq: **} implies that  $\big(\rho_n\big(\rho_{n,k}(x)^*\rho_{n,k}(x)\big)\big)_n$ is Cauchy and hence converges to some self-adjoint $\bar{z}\in \Lim $ with $e \bar{z}=\rho_k(x)^*\rho_k(x)$. From this and the self-adjoint case we have
\begin{align*}
    \|\rho_k(x)\|^2=\|\rho_k(x)^*\rho_k(x)\|=\|e\bar{z}\|=\|\bar{z}\|.
\end{align*}
Since $\bar{z}$ is self-adjoint \ref{lemit: uniform order unit} tells us 
$  \|\rho_k(x)\|^2 e =\|\bar{z}\|e\geq \bar{z}$. By \ref{lemit: asym comm}, $e$ commutes with $\bar{z}$ and moreover 
with $|\rho_k(x)|$. 
It follows that 
\[\|\rho_k(x)\|^2 e^2 \geq e\bar{z} = \rho_k(x)^*\rho_k(x),\]
which implies
\[\|\rho_k(x)\| e \geq |\rho_k(x)|.\]
With this, the proof of the self-adjoint case also shows that $\|e^m |\rho_k(x)|\|\\ =\| |\rho_k(x)| \|$ for any $m\geq 1$, and so it follows that for any $m\geq 1$ we have 
\begin{align*}\|e^m\rho_k(x)\|^2&=
\|e^{2m}|\rho_k(x)|^2\|\\
&=\|(e^m|\rho_k(x)|)^2\|\\
&=\|e^m|\rho_k(x)|\|^2\\
&=\||\rho_k(x)|\|^2\\
&=\|\rho_k(x)\|^2. \qedhere\end{align*}
\end{proof}

With these preliminary observations covered, we are now ready to prove our first structural result about $\CPCstar$-limits. 

\begin{proposition}\label{thm: main 1}
Let $\Lim $ be the limit of a  $\CPCstar$-system\\ $(F_n,\rho_{n+1,n})_n$. Then there exists an associative bilinear map\\ $\sbt \colon \Lim  \times \Lim  \longrightarrow \Lim $ satisfying
\begin{align}
    \rho_k(x)\sbt \rho_k(y) & = \lim_n \rho_n(\rho_{n,k}(x) \rho_{n,k}(y)) \label{eq: explicit mult}
    \end{align}
    for all  $k\geq 0$ and $x,y\in F_k$ and 
    \begin{align}
    e(\bar{x}\sbt \bar{y})&=\bar{x} \bar{y} \label{eq: mult id}
\end{align}
for all $\bar{x},\bar{y}\in \Lim$, 
so that with respect to the multiplication $\sbt$,  $\Lim $ is a $\Cstar$-algebra with norm $\|\cdot\|_{F_\infty}$, denoted $\Cstar_{\sbt}\big(\Lim \big)$.\footnote{We are simultaneously viewing $\Lim $ as a subspace of $F_\infty$ and as the space $\Cstar_{\sbt}\big(\Lim \big)$. For the sake of clarity, multiplication in $F_\infty$ will always be denoted by the usual concatenation (e.g., $\bar{x}\bar{y}$), and multiplication in $\Cstar_{\sbt}\big(\Lim \big)$ will always be denoted with the $\sbt$ (e.g., $\bar{x}\sbt \bar{y}$). So, for $e(\bar{x}\sbt \bar{y})$, we are taking the $F_\infty$-product of $e$ and $\bar{x}\sbt\bar{y}$. }
\end{proposition}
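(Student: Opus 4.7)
The plan is to use \cref{lem: uniform order unit}\ref{lemit: isometric} as the organizing principle: since multiplication by $e$ is isometric on $\Lim$, the relation $e(\bar{x} \sbt \bar{y}) = \bar{x}\bar{y}$ (in $F_\infty$) uniquely pins down the putative product and reduces every identity about $\sbt$ to one in $F_\infty$ after multiplication by a suitable power of $e$. We first define $\sbt$ on the dense subspace $D := \bigcup_k \rho_k(F_k)$ via the formula \eqref{eq: explicit mult}, then extend by continuity.

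For the definition on $D$, fix $k \geq 0$ and $x, y \in F_k$ and set $z_n := \rho_n(\rho_{n,k}(x)\rho_{n,k}(y)) \in \Lim$. By \cref{lemit: asym perp}, $e z_n \to \rho_k(x)\rho_k(y)$ in $F_\infty$, so $(e z_n)_n$ is Cauchy. Since $z_m - z_n \in \Lim$, \cref{lem: uniform order unit}\ref{lemit: isometric} gives $\|z_m - z_n\| = \|e(z_m - z_n)\|$, so $(z_n)_n$ itself is Cauchy and converges to an element we call $\rho_k(x) \sbt \rho_k(y) \in \Lim$. Passing to the limit $n \to \infty$ yields \eqref{eq: mult id} for $\bar{x}, \bar{y}$ of this form. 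Compatibility under replacing $k$ by some $k' \geq k$ (and $x, y$ by $\rho_{k',k}(x), \rho_{k',k}(y)$) follows from $\rho_{n, k'} \circ \rho_{k', k} = \rho_{n,k}$, so $\sbt$ extends bilinearly to $D$, and for $\bar{x}, \bar{y} \in D$ we obtain
\[
\|\bar{x} \sbt \bar{y}\| = \|e(\bar{x} \sbt \bar{y})\| = \|\bar{x}\bar{y}\|_{F_\infty} \leq \|\bar{x}\|\|\bar{y}\|.
\]
Thus $\sbt$ is jointly contractive on $D$ and extends uniquely by continuity to a jointly contractive bilinear map on $\Lim \times \Lim$; \eqref{eq: mult id} persists on all of $\Lim$ by joint continuity of multiplication in $F_\infty$.

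Since $\Lim$ is a closed self-adjoint subspace of $F_\infty$, it remains to verify associativity of $\sbt$, compatibility of the $F_\infty$-involution, and the C*-identity. Each follows by ``stripping off'' powers of $e$ via \cref{lem: uniform order unit}\ref{lemit: isometric}, together with the fact (\cref{lem: uniform order unit}\ref{lemit: asym comm}) that $e$ commutes with $\Lim$ in $F_\infty$. Concretely, associativity is witnessed in $F_\infty$ by
\[
e^2 \big((\bar{x} \sbt \bar{y}) \sbt \bar{z}\big) = e(\bar{x} \sbt \bar{y})\,\bar{z} = \bar{x}\bar{y}\bar{z} = \bar{x}\,e(\bar{y} \sbt \bar{z}) = e^2 \big(\bar{x} \sbt (\bar{y} \sbt \bar{z})\big);
\]
the involution identity $(\bar{x} \sbt \bar{y})^* = \bar{y}^* \sbt \bar{x}^*$ follows from $e((\bar{x} \sbt \bar{y})^*) = (\bar{x}\bar{y})^* = \bar{y}^*\bar{x}^* = e(\bar{y}^* \sbt \bar{x}^*)$ (using $e^* = e$); and the C*-identity reads $\|\bar{x}^* \sbt \bar{x}\| = \|e(\bar{x}^* \sbt \bar{x})\| = \|\bar{x}^*\bar{x}\|_{F_\infty} = \|\bar{x}\|^2$. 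The main obstacle is the Cauchy argument in the middle paragraph, where the asymptotic order-zero condition \eqref{eq: asym perp} does the essential work; once the identity $e(\bar{x} \sbt \bar{y}) = \bar{x}\bar{y}$ is in hand, the C*-algebra verifications are essentially automatic.
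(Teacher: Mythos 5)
Your proposal is correct and follows essentially the same route as the paper: define $\sbt$ on the dense union $\bigcup_k\rho_k(F_k)$ via \eqref{eq: explicit mult} (existence of the limit from \cref{lemit: asym perp} together with \cref{lem: uniform order unit}\ref{lemit: isometric}), extend to $\Lim$, and verify associativity, the involution identity, and the $\Cstar$-identity by stripping off powers of $e$ using nondegeneracy and the commutation statement \cref{lem: uniform order unit}\ref{lemit: asym comm}. The only differences are cosmetic: the paper extends via lifts and \cref{lem: limit of a system} rather than abstract continuity of a bounded bilinear map, and it spells out that the definition on the dense union is independent of the chosen representatives $k$, $x$, $y$ — a point you should state explicitly, though it is immediate from your own organizing principle that $e(\bar{x}\sbt\bar{y})=\bar{x}\bar{y}$ together with \cref{lem: uniform order unit}\ref{lemit: isometric} uniquely determines $\bar{x}\sbt\bar{y}$ in $\Lim$.
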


\begin{proof}
We begin by defining $\sbt\colon\bigcup_n \rho_n(F_n) \times \bigcup_n \rho_n(F_n) \longrightarrow \Lim $. 
For elements $\bar{x},\bar{y}$ in the nested union $\bigcup_n \rho_n(F_n)$ 
we may choose $k\geq 0$ and $x,y\in F_k$ so that $\bar{x}=\rho_k(x)$ and $\bar{y}=\rho_k(y)$ and define $\bar{x}\sbt \bar{y}$ by 
\begin{align}
     \bar{x}\sbt \bar{y} & \coloneqq  \lim_n \rho_n(\rho_{n,k}(x) \rho_{n,k}(y)). 
\end{align}
That the given limit exists follows immediately from \cref{lemit: asym perp} and \cref{lem: uniform order unit}\ref{lemit: isometric}.  
Moreover, this definition is independent of the particular choice of $k$ and $x,y\in F_k$. Indeed, for any $(x_n)_n, (y_n)_n\in \prod F_n$ with $[(x_n)_n]=\rho_k(x)=\bar{x}$ and $[(y_n)_n]=\rho_k(y)=\bar{y}$ we have $[(\rho_{n,k}(x)\rho_{n,k}(y))_n]=[(x_ny_n)_n]$. Then for any $\varepsilon>0$ and $M>k$ with $\sup_{n>M}\|\rho_{n,k}(x)\rho_{n,k}(y)-x_ny_n\|<\varepsilon$, we have $\|\rho_n(\rho_{n,k}(x)\rho_{n,k}(y))-\rho_n(x_ny_n)\|<\varepsilon$ for all $n>M$, and so the sequence $\rho_n(x_ny_n)\in \Lim $ also converges to  $\lim_n \rho_n(\rho_{n,k}(x) \rho_{n,k}(y))$. 

Bilinearity of this map follows from that of the usual product in the $F_n$'s. For each $k\geq 0$ and $x,y\in F_k$ we have from \cref{lemit: asym perp} that
\begin{align}\label{lemit: mult on dense}
\rho_k(x)\rho_k(y) =  e(\rho_k(x)\sbt \rho_k(y)),
\end{align}
which by the nondegeneracy condition in \cref{lem: uniform order unit}\ref{lemit: isometric} implies
\begin{align}\label{eq: C norm}
    \|\rho_k(x)\sbt \rho_k(y)\| =  \|e(\rho_k(x)\sbt \rho_k(y))\| =  \|\rho_k(x)\rho_k(y)\|.
\end{align}
For $\bar{x},\bar{y}\in \Lim$ and $(x_n)_n, (y_n)_n\in \prod F_n$ with $\bar{x}=[(x_n)_n]$ and $\bar{y}=[(y_n)_n]$, it follows from \eqref{lemit: mult on dense} and \cref{lem: limit of a system} that
\[\lim_n e\big(\rho_n(x_n)\sbt\rho_n(y_n)\big)=\lim_n \rho_n(x_n)\rho_n(y_n)=xy.\] Moreover, by 
\cref{lem: uniform order unit}\ref{lemit: isometric} we have for all $m>n\geq 0$
\begin{align*}
    &\|e\big(\rho_m(x_m)\sbt\rho_m(y_m)-\rho_n(x_n)\sbt\rho_n(y_n)\big)\|\\ &=\|\rho_m(x_m)\sbt\rho_m(y_m)-\rho_n(x_n)\sbt\rho_n(y_n)\|.
\end{align*}
Hence $\lim_n \rho_n(x_n) \sbt \rho_n(y_n)$ exists, and we may extend the map to\\ $\sbt\colon  
\Lim 
\times \Lim  
\longrightarrow \Lim $ by defining 
\[\bar{x} \sbt \bar{y} \coloneqq  \lim_n \rho_n(x_n) \sbt \rho_n(y_n)\]
for each $(\bar{x},\bar{y})\in \Lim 
\times \Lim $, where $(x_n)_n, (y_n)_n\in \prod F_n$ are any lifts of $\bar{x}$ and $\bar{y}$. Note that we still have 
\[e(\bar{x} \sbt \bar{y}) = e\big(\lim_n \rho_n(x_n) \sbt \rho_n(y_n)\big) = \lim_n \rho_n(x_n) \rho_n(y_n) = \bar{x}\bar{y},\]
and so it follows again from nondegeneracy (\cref{lem: uniform order unit}\ref{lemit: isometric}) that $\bar{x} \sbt \bar{y}$ is the unique element $\bar{z}\in \Lim $ so that $e \bar{z} = \bar{x}\bar{y}$. In summary, our map $\sbt\colon \Lim 
\times \Lim  
\longrightarrow \Lim $ is well-defined, bilinear and satisfies \eqref{eq: mult id} and \eqref{eq: explicit mult}. To see that it is associative, fix $\bar{x}, \bar{y}, \bar{z} \in \Lim $. Then we have 
\begin{align*}
    e^2((\bar{x} \sbt \bar{y}) \sbt \bar{z}) &= e ((\bar{x} \sbt \bar{y}) \bar{z})\\ &= ((\bar{x}\bar{y}) \bar{z})\\ &= (\bar{x} (\bar{y} \bar{z}))\\ &= \bar{x} e(\bar{y}\sbt\bar{z})\\
    &=e (\bar{x} (\bar{y} \sbt \bar{z}))\\ &= e^2(\bar{x} \sbt (\bar{y} \sbt \bar{z})).
\end{align*}
It follows from nondegeneracy 
that 
\begin{align*}
    (\bar{x} \sbt \bar{y}) \sbt \bar{z} = \bar{x} \sbt (\bar{y} \sbt \bar{z}).
\end{align*}

So, $(\Lim , \sbt)$ is an algebra. To see that it is a $^*$-algebra with respect to the involution it inherits from $F_\infty$, we set $\bar{x}, \bar{y} \in \Lim $, and observe that
\begin{align*}
    e(\bar{x} \sbt \bar{y})^*=(\bar{x} \sbt \bar{y})^*e=(e(\bar{x} \sbt \bar{y}))^*=(\bar{x}\bar{y})^* = \bar{y}^*\bar{x}^* = e(\bar{y}^*\sbt \bar{x}^*).
\end{align*}
Again it follows from nondegeneracy 
that $(\bar{x}\sbt\bar{y})^*=\bar{x}\sbt\bar{y}$, so indeed $(\Lim , \sbt)$ is a $^*$-algebra. 

Since $(\Lim , \sbt)$ is already a Banach space with respect to the given norm on $F_\infty$, it remains to confirm that it is in fact a Banach $^*$-algebra and that the norm still satisfies the $\Cstar$-identity. Both of these follow immediately from the fact that  
\begin{align}\label{eq: same norm on prod}
    \|\bar{x}\sbt \bar{y}\| = \|e(\bar{x}\sbt \bar{y})\| = \|\bar{x} \bar{y}\| 
\end{align} 
for any 
$\bar{x}, \bar{y} \in \Lim $.
\end{proof}

\begin{proposition}\label{thm: order zero coi}
Let $(F_n,\rho_{n+1,n})_n$ be a $\CPCstar$-system. 
Then \[\Theta \coloneqq  \id_{\Lim } \colon  \Cstar_{\sbt}\big(\Lim \big) \longrightarrow F_\infty\] is an order zero complete order embedding. 
\end{proposition}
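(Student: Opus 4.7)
The plan is to invoke \cref{prop: isom perp is coe}, which asserts that an isometric c.p.\ order zero map between $\Cstar$-algebras is automatically a complete order embedding. Hence it suffices to verify that $\Theta$ is c.p., order zero, and isometric.

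Two of these properties come essentially for free. By construction in \cref{thm: main 1}, the norm on $\Cstar_{\sbt}\big(\Lim\big)$ is the one it inherits as a subspace of $F_\infty$, so $\Theta = \id_{\Lim}$ is trivially isometric at the scalar level. For the order zero property, if $\bar{x},\bar{y}\in \Cstar_{\sbt}\big(\Lim\big)_+$ satisfy $\bar{x}\sbt\bar{y}=0$, then \eqref{eq: mult id} yields
\[
\Theta(\bar{x})\Theta(\bar{y})=\bar{x}\bar{y}=e(\bar{x}\sbt\bar{y})=0
\]
in $F_\infty$, so orthogonal positives are sent to orthogonal elements.

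The substantive step is complete positivity. Fix $r\geq 1$ and a positive element $\bar{x}\in \M_r\big(\Cstar_{\sbt}(\Lim)\big)$, which I write as $\bar{x}=\bar{y}^*\sbt\bar{y}$ for some $\bar{y}\in\M_r\big(\Cstar_{\sbt}(\Lim)\big)$, where $\sbt$ now denotes the induced matrix product. Applying \eqref{eq: mult id} to each summand in every entry of $\bar{y}^*\sbt\bar{y}$, and using that $e^{(r)}=e\otimes 1_r$ acts coordinatewise, I obtain
\[
e^{(r)}\bar{x}=\bar{y}^*\bar{y}\geq 0 \quad\text{in }\M_r(F_\infty).
\]
The matrix analogue of \cref{lem: uniform order unit}\ref{lemit: asym comm}, immediate by applying the scalar case entrywise, tells me that $e^{(r)}$ commutes with $\bar{x}$ in $\M_r(F_\infty)$. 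Since $e^{(r)}$ and $\bar{x}$ are commuting self-adjoints, $\Cstar(e^{(r)},\bar{x})\subset \M_r(F_\infty)$ is commutative, say isomorphic to $C_0(\Omega)$ under the Gelfand transform, with coordinates $(s,t)$ corresponding to $(e^{(r)},\bar{x})$. The conditions $e^{(r)}\geq 0$ and $e^{(r)}\bar{x}\geq 0$ translate to $s\geq 0$ and $st\geq 0$ on $\Omega$, while \cref{lem: uniform order unit}\ref{lemit: uniform order unit} applied to $\pm\bar{x}$ yields $|t|\leq \|\bar{x}\|\,s$. Consequently $t=0$ wherever $s=0$ and $t\geq 0$ wherever $s>0$, so $t\geq 0$ on all of $\Omega$, i.e., $\bar{x}\geq 0$ in $\M_r(F_\infty)$, as required.

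With c.p., order zero, and isometric all established, \cref{prop: isom perp is coe} delivers the conclusion. The main obstacle is the complete positivity step; its resolution hinges on reducing to a commutative sub-$\Cstar$-algebra via the commutation of $e$ with $\Lim$, and then exploiting the uniform order-unit inequality from \cref{lem: uniform order unit}\ref{lemit: uniform order unit} to force the spectral content of $\bar{x}$ off the locus where $e^{(r)}$ vanishes.
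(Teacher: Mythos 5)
Your proof is correct and follows essentially the same route as the paper: reduce to complete positivity via \cref{prop: isom perp is coe}, observe $e^{(r)}\bar{x}=\bar{y}^*\bar{y}\geq 0$ from \eqref{eq: mult id}, and conclude positivity in $\M_r(F_\infty)$ by functional calculus in the commutative algebra $\Cstar(e^{(r)},\bar{x})$ together with the order-unit inequality of \cref{lem: uniform order unit}\ref{lemit: uniform order unit}. The only (harmless) difference is that you work directly with the positive, hence self-adjoint, element $\bar{x}=\bar{y}^*\sbt\bar{y}$, so you can skip the paper's nondegeneracy argument that a general $\bar{y}$ with $e^{(r)}\bar{y}\geq 0$ is self-adjoint.
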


\begin{proof}
Clearly the map is isometric, $^*$-preserving, and linear since the given $^*$-linear structures and norms on $\Cstar_{\sbt}\big(\Lim \big)$ and $\Lim  \subset F_\infty$ are the same. Moreover, from \eqref{eq: same norm on prod} we can conclude that $\Theta$ is order zero. By \cref{prop: isom perp is coe}, it remains  
only to show that it is completely positive, i.e., for any $r\geq 1$ and $\bar{x} \in \M_r(\Cstar_{\sbt}\big(\Lim \big))$, the element $\bar{x}^*\sbt \bar{x}\in \M_r(\Lim )$ is in $\M_r(F_\infty)_+$. (We still use $\sbt$ to denote the multiplication in $\M_r(\Cstar_{\sbt}\big(\Lim \big))$.)

Fix $r\geq 1$, and note that by applying  \eqref{eq: mult id} coordinate-wise we have for any $\bar{x} \in \M_r(\Cstar_{\sbt}\big(\Lim \big))$ that 
\[e^{(r)}(\bar{x}^*\sbt \bar{x}) = \bar{x}^*\bar{x}\geq 0.\]
So it suffices to show that $e^{(r)}\bar{y}\geq 0$ implies $\bar{y}\geq 0$ for any $\bar{y}\in \M_r(\Lim )\subset \M_r(F_\infty)$. 

To that end, fix $\bar{y}\in \M_r(\Lim )\subset \M_r(F_\infty)$ and assume $e^{(r)}\bar{y}\geq 0$. If $y=0$, there is nothing to show, so we assume 
$\|\bar{y}\|=1$. 
Using \cref{lem: uniform order unit}\ref{lemit: isometric}, we also get a nondegeneracy condition for $e^{(r)}$, which says that $e^{(r)}\bar{z}=0$ implies $\bar{z}=0$ for any 
$\bar{z}\in \M_r(\Lim )\subset \M_r(F_\infty)$.
Then since 
\[e^{(r)}\bar{y}= (e^{(r)}\bar{y})^*= e^{(r)}\bar{y}^*,\]
we conclude that $\bar{y}= \bar{y}^*$. 

From \cref{lem: uniform order unit}\ref{lemit: uniform order unit}, we know $e^{(r)}- \bar{y}$ and $e^{(r)}+ \bar{y}\in \M_r(F_\infty)_+$.  
Since $e^{(r)}$ and $\bar{y}$ commute (by \cref{lem: uniform order unit}\ref{lemit: asym comm}), this implies that $e^{(r)} \geq |\bar{y}|$ and that we can identify $\Cstar(e^{(r)},\bar{y})\subset \M_r(F_\infty)$ with $C_0(\Omega)$ for some locally compact $\Omega\subset \sigma(e^{(r)})\times \sigma(\bar{y})$, 
where $e^{(r)}$ corresponds to the map $(s,t)\mapsto s$ and $\bar{y}$ corresponds to the map $(s,t)\mapsto t$. 
Since $e^{(r)} \geq |\bar{y}|$, we have $s \geq |t|$ for all $(s,t)\in \Omega$. Now, suppose there exists $t_0\in \sigma(\bar{y})\cap (-\infty,0)$, and let $s_0\in \sigma(e^{(r)})$ so that $(s_0,t_0)\in \Omega$. Then $s_0 \geq |t_0|>0$ implies $s_0>0$, whence $s_0t_0<0$, contradicting $e^{(r)}\bar{y}\geq 0$. Hence $\bar{y}\geq 0$, and our proof is complete. 
\end{proof}

\begin{corollary}\label{prop: CPC*-algebras'}
Let $(F_n,\rho_{n+1,n})_n$ be a $\CPCstar$-system, and let $\Theta$ be as in \emph{\cref{thm: order zero coi}}.
Then for any $\Cstar$-algebra $A$ and complete order embedding $\psi\colon A\longrightarrow F_\infty$ with $\psi(A)= \Lim $, 
the map $\Theta^{-1}\circ\psi\colon A\longrightarrow \Cstar_{\sbt}\big(\Lim \big)$ is a $^*$-isomorphim. 
\end{corollary}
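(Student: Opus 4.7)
The plan is very short: we just need to recognize this as a direct application of \cref{rmk: folklore 1}(i), which says a complete order isomorphism between $\Cstar$-algebras is automatically a $^*$-isomorphism.

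First I would unpack what $\Theta$ gives us. By \cref{thm: order zero coi}, $\Theta\colon \Cstar_\sbt\big(\Lim\big)\longrightarrow F_\infty$ is a complete order embedding. Since $\Theta$ is literally the identity on the underlying set $\Lim$, it is a bijection onto $\Lim$; regarding the codomain as $\Lim$ rather than $F_\infty$, $\Theta$ becomes a surjective complete order embedding, hence a complete order isomorphism $\Cstar_\sbt\big(\Lim\big)\longrightarrow \Lim$ (where $\Lim$ carries its operator space structure from $F_\infty$). Consequently $\Theta^{-1}\colon \Lim\longrightarrow \Cstar_\sbt\big(\Lim\big)$ is a complete order isomorphism.

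Next, by hypothesis $\psi\colon A\longrightarrow F_\infty$ is a complete order embedding with $\psi(A)=\Lim$, so viewed as a map $A\longrightarrow \Lim$ it is likewise a complete order isomorphism. The composition of two complete order isomorphisms is again a complete order isomorphism, so $\Theta^{-1}\circ\psi\colon A\longrightarrow \Cstar_\sbt\big(\Lim\big)$ is a complete order isomorphism between two $\Cstar$-algebras.

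Finally, I would invoke \cref{rmk: folklore 1}(i) to conclude that $\Theta^{-1}\circ\psi$ is automatically a $^*$-isomorphism. There is no real obstacle here; the only subtlety worth mentioning is that while $\Theta$ itself is order zero (not multiplicative), what matters for the corollary is its behavior as a complete order isomorphism between $\Cstar$-algebras, not as an order zero map into $F_\infty$ — the $\Cstar$-structure transported back through $\Theta^{-1}$ is precisely the structure built in \cref{thm: main 1}.
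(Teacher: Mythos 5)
Your argument is correct and is essentially the paper's own proof: both use \cref{thm: order zero coi} to see that $\Theta^{-1}\circ\psi$ is a surjective complete order isomorphism between $\Cstar$-algebras and then conclude with Remark \ref{rmk: folklore 1}(i). Your extra remark about $\Theta$ being order zero but not multiplicative is a harmless clarification, not a deviation.
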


\begin{proof}
With \cref{thm: order zero coi} we see that $\Theta^{-1}\circ\psi\colon A\longrightarrow \Cstar_{\sbt}\big(\Lim \big)$ is a surjective complete order isomorphism. 
The claim then follows from Remark \ref{rmk: folklore 1}(i).
\end{proof}

\begin{remark}

By definition, $\Lim$ and $\Cstar_{\sbt}\big(\Lim \big)$ are identical as involutive Banach spaces. With the previous results at hand, we may describe the $\Cstar$-algebra structure in two alternative ways: 
Let $C$ denote the $\Cstar$-algebra generated by $\Lim $ in $F_\infty$ and $\pi_\Theta\colon\Cstar_{\sbt}\big(\Lim \big)\longrightarrow \mathcal{M}(C)$ the $^*$-homomorphism guaranteed by Theorem \ref{thm: structure thm}. From Theorem \ref{thm: structure thm}, we see that $\ker(\pi_\Theta)=\ker(\Theta)=\{0\}$, which means $\pi_\Theta$ is injective, and $\Cstar_{\sbt}\big(\Lim \big)\cong \pi_\Theta\big(\Cstar_{\sbt}\big(\Lim \big)\big)$, so we may regard $\Cstar_{\sbt}\big(\Lim \big)$ as a $\Cstar$-algebra of multipliers of $C$. Moreover, if $e\in \Cstar_{\sbt}\big(\Lim \big)$, then $\Theta(x)=x=e\pi_\Theta(x)$ for all $x\in \Cstar_{\sbt}\big(\Lim \big)$. 

On the other hand, if $e\in \Lim $, then one can also show that $(\Lim, \{\M_r(\Lim)\cap \M_r(F_\infty)_+\}_r,e)$ forms an abstract operator system (cf.\ \cite{CE77} or \cite[Chapter 13]{Pau02}). As an operator system, this is equal to the operator system $(\Cstar_{\sbt}\big(\Lim\big), \{\M_r\big(\Cstar_{\sbt}(\Lim)\big)\}_r,e)$, which happens to also be a $\Cstar$-algebra, and so for each $r\geq 1$ 
the norm on $\M_r\big(\Cstar_{\sbt}(\Lim)\big)$ 
agrees with the matrix norm on  $\M_r\big(\Lim\big)$  
induced by the matrix order. 
Hence by \cite[Corollary 4.2]{Ham79}, we may identify $\Cstar_{\sbt}\big(\Lim \big)$ with the enveloping $\Cstar$-algebra of this operator system.
\end{remark}

\begin{remark}
Though the subspace $\Lim \subset F_\infty$ 
might not be a sub-$\Cstar$-algebra in general,  
the correspondence between $\Lim $ and $\Cstar_{\sbt}\big(\Lim \big)$ is so robust that we take the liberty to call both $\Lim $ and  $\Cstar_{\sbt}\big(\Lim \big)$ the limit of a $\CPCstar$-system, and we refer to\\  $\Cstar_{\sbt}\big(\Lim \big)$ as a \emph{$\CPCstar$-limit}.
\end{remark}

As we mentioned before, $e$ need not be in $\Lim $. Nonetheless, the sequence $(\rho_n(1_{F_n}))_n$ will still provide an approximate identity for the limit of the system.   

\begin{corollary}\label{cor: rho 1 approx id}
Let $(F_n,\rho_{n+1,n})_n$ be a $\CPCstar$-system. 
Then $(\rho_n(1_{F_n}))_n$ forms an increasing approximate identity for $\Cstar_{\sbt}\big(\Lim \big)$. 
\end{corollary}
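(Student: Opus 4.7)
The plan is to verify three things in turn: that each $\rho_n(1_{F_n})$ is a positive contraction in $\Cstar_{\sbt}\big(\Lim\big)$, that the sequence is increasing, and that it is an approximate identity. For the first two, the key tool is that $\Theta$ is a complete order embedding (\cref{thm: order zero coi}): since $\rho_{n+1,n}$ is c.p.c., $1_{F_{n+1}} - \rho_{n+1,n}(1_{F_n}) \in (F_{n+1})_+$, and applying the c.p.\ map $\rho_{n+1}$ gives $\rho_{n+1}(1_{F_{n+1}}) - \rho_n(1_{F_n}) \in F_\infty^+$, which lifts to a positivity statement in $\Cstar_{\sbt}\big(\Lim\big)$ via \cref{thm: order zero coi}. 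Positivity of each $\rho_n(1_{F_n})$ and the contractive bound follow similarly.

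For the approximate identity property, I would first reduce to a dense subset of the positive cone via the standard Cauchy--Schwarz-type inequality $\|a - p\sbt a\|^2 \leq \|a\sbt a^* - p\sbt(a\sbt a^*)\|$, valid for any positive contraction $p$; this reduces the task to showing $\rho_n(1_{F_n}) \sbt b \to b$ for $b$ in a dense subset of $\Cstar_{\sbt}\big(\Lim\big)_+$. The convenient dense subset is $\{\rho_k(x)^*\sbt \rho_k(x) : k \geq 0,\ x \in F_k\}$, dense because $y \mapsto y^* \sbt y$ is continuous and $\bigcup_k \rho_k(F_k)$ is dense in $\Cstar_{\sbt}\big(\Lim\big)$. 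Fixing such a $b$ and using \eqref{eq: mult id} together with nondegeneracy from \cref{lem: uniform order unit}\ref{lemit: isometric}, I will translate the $\sbt$-norm into the $F_\infty$-norm:
\[
\|\rho_n(1_{F_n}) \sbt b - b\| = \|e(\rho_n(1_{F_n})\sbt b - b)\|_{F_\infty} = \|(\rho_n(1_{F_n}) - e)\,b\|_{F_\infty}.
\]
Since \eqref{eq: explicit mult} gives $b = \lim_n \rho_n(\rho_{n,k}(x^*)\rho_{n,k}(x))$ and $\|e - \rho_n(1_{F_n})\|_{F_\infty} \leq 1$, the problem further reduces to showing $\|(e - \rho_n(1_{F_n}))\,\rho_n(\rho_{n,k}(x^*)\rho_{n,k}(x))\|_{F_\infty} \to 0$.

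The hard part will be this last estimate, where the $\CPCstar$-condition \eqref{eq: asym perp} enters. My plan is to apply \eqref{eq: asym perp} at level $k$ with the pair $x^*, x$ twice: once with $l = n$ and once with $l = m - 1$. Both applications approximate the common reference $\rho_{m,k}(x^*)\rho_{m,k}(x)$, so subtracting should yield, for $n$ sufficiently large and $m > n$,
\[
\big\|(\rho_{m,m-1}(1_{F_{m-1}}) - \rho_{m,n}(1_{F_n}))\,\rho_{m,n}(\rho_{n,k}(x^*)\rho_{n,k}(x))\big\| < 2\varepsilon,
\]
which is precisely the norm of the $m$-th entry of the $F_\infty$-element in question. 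The conceptual obstacle to be navigated is that $e$ may not belong to $\Lim$ (so $\Cstar_{\sbt}\big(\Lim\big)$ may be non-unital), preventing a direct ``$e$ is the unit'' argument; the trick is to exploit the freedom in the index $l$ of \eqref{eq: asym perp} to compare the asymptotic roles of $e$ and of $\rho_n(1_{F_n})$ against the same reference element.
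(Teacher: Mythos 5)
Your proposal is correct and follows essentially the same route as the paper's own proof: establish monotonicity by pushing positivity through the complete order embedding of \cref{thm: order zero coi}, reduce the approximate identity statement to a dense family of $\sbt$-products of elements $\rho_k(\cdot)$, use nondegeneracy (\cref{lem: uniform order unit}\ref{lemit: isometric}) together with \eqref{eq: mult id} to convert the $\sbt$-norm into the $F_\infty$-norm, and then conclude with \eqref{eq: explicit mult} and the $\CPCstar$-condition \eqref{eq: asym perp}. The only deviations are cosmetic and both work: the paper tests on arbitrary products $\rho_k(x)\sbt\rho_k(y)$ (every element of the $\Cstar$-algebra factors as a $\sbt$-product) instead of your Cauchy--Schwarz reduction to positive elements $\rho_k(x)^*\sbt\rho_k(x)$, and it disposes of the term involving $e$ via the exact identity $e(\bar{x}\sbt\bar{y})=\bar{x}\bar{y}$ from \eqref{eq: mult id}, so a single application of \eqref{eq: asym perp} with $l=n$ suffices, whereas you achieve the same effect by a second application with $l=m-1$ against the common reference $\rho_{m,k}(x^*)\rho_{m,k}(x)$.
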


\begin{proof}
The sequence $(\rho_n(1_{F_n}))_n$ is increasing in $F_\infty$ since the maps $\rho_{m,n}$ are all c.p.c., and so by  
\cref{thm: order zero coi}, it is also increasing in $\Cstar_{\sbt}\big(\Lim \big)$.

To see that this forms an approximate identity, it suffices to consider elements of the form $\rho_k(x)\sbt \rho_k(y)$ for some $k\geq 0$ and $x,y\in F_k$. Indeed, since $\Cstar_{\sbt}\big(\Lim \big)$ is a $\Cstar$-algebra, it follows that for any $\bar{z}\in \Lim $, there exists $\bar{x},\bar{y}\in \Lim $ so that $\bar{z}=\bar{x}\sbt\bar{y}$. Then for any lift $(x_n)_n, (y_n)_n$ of $\bar{x}$ and $\bar{y}$, respectively, \cref{lem: limit of a system}  
tells us $\rho_n(x_n)\longrightarrow \bar{x}$ and $\rho_n(y_n)\longrightarrow \bar{y}$, and so $\rho_n(x_n)\sbt\rho_n(y_n)\longrightarrow \bar{x}\sbt\bar{y}=\bar{z}$. 

Now fix $k\geq 0, x,y \in F_k$, and $\varepsilon>0$, and choose $M>k$ so that 
\begin{align*}
\|\rho_{m,n}(1_{F_n})\rho_{m,n}(\rho_{n,k}(x)\rho_{n,k}(y))-\rho_{m,k}(x)\rho_{m,k}(y)\|&<\varepsilon/2\ \text{ and}\\
\|\rho_k(x)\sbt \rho_k(y)-\rho_n(\rho_{n,k}(x)\rho_{n,k}(y))\|&<\varepsilon/2
\end{align*}
for all $m>n>M$. 
Then using \cref{lem: uniform order unit}\ref{lemit: isometric} and \eqref{eq: mult id}, we have for all $n>M$, 
\begin{align*}
    &\|\rho_n(1_{F_n})\sbt(\rho_k(x)\sbt\rho_k(y))-\rho_k(x)\sbt\rho_k(y)\|\\
    &=\|e\ \big(\rho_n(1_{F_n})\sbt(\rho_k(x)\sbt\rho_k(y)))-\rho_k(x)\sbt\rho_k(y)\big)\|\\
    &=\|\rho_n(1_{F_n})\big(\rho_k(x)\sbt \rho_k(y)\big)-\rho_k(x)\rho_k(y)\|\\
    &< \varepsilon/2 + \|\rho_n(1_{F_n})\rho_n(\rho_{n,k}(x)\rho_{n,k}(y))-\rho_k(x)\rho_k(y)\|\\
    &\leq \varepsilon. \qedhere
\end{align*}
\end{proof}

It remains to show that all $\CPCstar$-limits are nuclear. For this, we will use Ozawa and Sato's one-way-CPAP, which appeared implicitly in \cite{Ozawa2002} (via \cite{KS03}); see \cite[Theorem 5.1]{Sato2019} for the explicit statement and its  proof. 

\begin{theorem}[\cite{Sato2019, Ozawa2002}]\label{thm: Sato}
A $\Cstar$-algebra $A$ is nuclear if and only if there exists a net $\{\varphi_\lambda\colon F_\lambda\longrightarrow A\}_{\lambda\in \Lambda}$ of c.p.c.\ maps from finite-dimensional $\Cstar$-algebras $\{F_\lambda\}_{\lambda \in \Lambda}$ such that the induced c.p.c.\ map 
\begin{align*}
    \Phi=(\varphi_\lambda)_\lambda\colon \textstyle{\prod} F_\lambda/\textstyle{\bigoplus} F_\lambda &\longrightarrow \ell^\infty (\Lambda,A)/c_0(\Lambda, A),
    \end{align*}
    given by
    $\Phi([(x_\lambda)_{\lambda\in \Lambda}])=[(\varphi_\lambda(x_\lambda))_{\lambda\in \Lambda}],
$ 
satisfies
\[\iota(A^1)\subset \Phi\big((\textstyle{\prod} F_\lambda/ \textstyle{\bigoplus} F_\lambda)^1\big),\]
where $\iota\colon A \longrightarrow \ell^\infty (\Lambda,A)/c_0(\Lambda, A)$ denotes the identification of $A$ with the sub-$\Cstar$-algebra of $\ell^\infty (\Lambda,A)/c_0(\Lambda, A)$ consisting of equivalence classes of constant nets.
\end{theorem}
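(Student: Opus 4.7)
The plan is to establish the two directions of the equivalence separately. The forward direction (nuclearity implies the one-way CPAP) is a direct application of the standard CPAP: given a nuclear $A$, fix a net of c.p.c.\ maps $\psi_\lambda\colon A\longrightarrow F_\lambda$ and $\varphi_\lambda\colon F_\lambda\longrightarrow A$ with $\varphi_\lambda\circ\psi_\lambda\longrightarrow \id_A$ in point-norm. For any $a\in A^1$, set $x_\lambda\coloneqq \psi_\lambda(a)$; contractivity of $\psi_\lambda$ gives $[(x_\lambda)_\lambda] \in (\prod F_\lambda/\bigoplus F_\lambda)^1$, and by construction $\Phi([(x_\lambda)_\lambda])=[(\varphi_\lambda\psi_\lambda(a))_\lambda]=\iota(a)$, since $\|\varphi_\lambda\psi_\lambda(a)-a\|\longrightarrow 0$ makes the two nets equivalent modulo $c_0(\Lambda,A)$.

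The nontrivial direction is the converse. The strategy is to show that the one-way CPAP forces $A^{**}$ to be injective; Connes' theorem then yields nuclearity of $A$. I would fix a faithful representation $A\subset B(H)$, extend each $\varphi_\lambda\colon F_\lambda\longrightarrow A\subset B(H)$ to a c.p.c.\ map $\tilde\varphi_\lambda\colon F_\lambda\longrightarrow B(H)$ via Arveson's extension theorem, and then use the one-way CPAP to factor $\id_A$, in a point-weak-$*$ limit sense, through the ultraproduct $\prod_\omega F_\lambda$ along a free ultrafilter $\omega$ on $\Lambda$. Since each $F_\lambda$ is finite-dimensional, this ultraproduct is a semidiscrete von Neumann algebra, and the induced factorization can be averaged (using the weak-$*$ compactness of the unit ball of c.p.\ maps $B(H)\longrightarrow B(H)^{**}$) to produce a c.p.\ projection of $B(H)^{**}$ onto $A^{**}$. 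This establishes injectivity of $A^{**}$, whence nuclearity of $A$ via Connes' theorem.

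The main obstacle is the passage from the elementwise lifting supplied by the one-way CPAP to a genuinely completely positive map. The hypothesis only guarantees, for each $a$ separately, the existence of \emph{some} preimage $[(x_\lambda^a)_\lambda]$ in the unit ball; the assignment $a\mapsto [(x_\lambda^a)_\lambda]$ need not be linear, let alone positive. The ultraproduct/bidual framework circumvents this by trading pointwise existence for a weak-$*$ cluster point that automatically respects the linear and order structure, at the cost of landing in $A^{**}$ rather than $A$. Verifying that the resulting limit is indeed a conditional expectation (and not merely a c.p.\ map into $B(H)^{**}$) is the most delicate technical point, and is where Arveson's extension theorem, weak-$*$ compactness of the c.p.\ unit ball, and the finite-dimensionality of the $F_\lambda$ all play together.
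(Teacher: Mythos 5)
The paper does not prove this theorem at all: it is imported verbatim from Sato (Theorem 5.1 of the cited paper, going back to Ozawa), and the authors only record that the proof there is non-constructive, passing through injectivity of the bidual and Connes' theorem. Your forward direction is complete and correct, and your top-level plan for the converse (injectivity of $A^{**}$, then Connes) does match the route of the cited proof. But the converse as you sketch it has a genuine gap exactly at the step you yourself flag as delicate, and the sketch does not close it. The hypothesis hands you, for each $a\in A^1$ \emph{separately}, some preimage $[(x^a_\lambda)_\lambda]$ in the unit ball of $\prod F_\lambda/\bigoplus F_\lambda$; it hands you no maps out of $A$ (or out of $B(H)$) whatsoever --- that is the whole point of ``one-way''. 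Consequently there is no net of c.p.\ maps available whose point-weak-$*$ cluster point you could take: weak-$*$ compactness of the c.p.\ unit ball only produces limits of nets of maps you already possess, and the assignment $a\mapsto [(x^a_\lambda)_\lambda]$ is not linear, not canonical, and not positive. Saying that the ultraproduct/bidual framework ``automatically respects the linear and order structure'' assumes precisely what has to be proved; manufacturing a genuine c.p.\ map (and ultimately a conditional expectation onto $A''$) out of the purely elementwise surjectivity condition is the actual content of the Ozawa--Sato argument and requires substantially more than a compactness argument.

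Two of your supporting claims are also off. Arveson's extension theorem does nothing here: each $\varphi_\lambda$ is already defined on all of $F_\lambda$, so composing with $A\subset B(H)$ requires no extension (Arveson would be needed to extend maps \emph{defined on $A$} to $B(H)$, but the hypothesis provides no such maps). And the norm ultraproduct $\prod_\omega F_\lambda$ of finite-dimensional algebras is in general not a von Neumann algebra, so ``semidiscrete von Neumann algebra'' is not available as stated; the object that is an injective von Neumann algebra is the $\ell^\infty$-product $\prod_\lambda F_\lambda$, and exploiting its injectivity correctly is part of what makes the cited proof nontrivial. As written, then, your proposal establishes the easy implication and correctly identifies the strategy for the hard one, but does not constitute a proof of it; for this paper's purposes the statement is a quoted external result, and a self-contained argument would have to reproduce the Ozawa--Sato construction rather than appeal to a cluster-point heuristic.
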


\begin{theorem}\label{thm: nuclear}
$\CPCstar$-limits are nuclear.
\end{theorem}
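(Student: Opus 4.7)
The plan is to apply \cref{thm: Sato} with index set $\N$, using the canonical maps $\rho_n\colon F_n\longrightarrow \Cstar_{\sbt}\big(\Lim\big)$ as the required c.p.c.\ net. Two things must then be checked: that the $\rho_n$ are indeed c.p.c.\ when their codomain is regarded as $\Cstar_{\sbt}\big(\Lim\big)$ rather than $F_\infty$, and that the induced map $\Phi$ sends the closed unit ball of $\prod F_n/\bigoplus F_n$ onto a set that contains $\iota\big(\Cstar_{\sbt}\big(\Lim\big)^1\big)$.

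For the first point, each $\rho_n\colon F_n\longrightarrow F_\infty$ has image inside $\Lim$ and is c.p.c.\ by construction. Since \cref{thm: order zero coi} states that $\Theta=\id_{\Lim}\colon \Cstar_{\sbt}\big(\Lim\big)\longrightarrow F_\infty$ is a complete order embedding, the matrix orders and matrix norms of $\Cstar_{\sbt}\big(\Lim\big)$ coincide with those of $\Lim\subset F_\infty$. Therefore $\rho_n$, viewed as a map $F_n\longrightarrow \Cstar_{\sbt}\big(\Lim\big)$, remains completely positive and completely contractive.

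For the second point, fix $a\in \Cstar_{\sbt}\big(\Lim\big)^1$. As an element of $\Lim\subset F_\infty$, its norm is at most $1$, so it admits a bounded lift $(x_n)_n\in \prod F_n$ with $\limsup_n \|x_n\|\leq 1$, i.e., with $[(x_n)_n]\in \big(\prod F_n/\bigoplus F_n\big)^1$. By \cref{lem: limit of a system}, $\rho_n(x_n)\longrightarrow a$ in the norm of $F_\infty$, which is the same as the norm of $\Cstar_{\sbt}\big(\Lim\big)$. Hence $\Phi\big([(x_n)_n]\big)=[(\rho_n(x_n))_n]=\iota(a)$ in $\ell^\infty(\N,\Cstar_{\sbt}\big(\Lim\big))/c_0(\N,\Cstar_{\sbt}\big(\Lim\big))$, which verifies the image condition.

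With these two verifications, \cref{thm: Sato} immediately implies that $\Cstar_{\sbt}\big(\Lim\big)$ is nuclear. There is no real obstacle here beyond bookkeeping; the substantive content has already been absorbed into \cref{thm: order zero coi} (giving the complete order isomorphism between the operator-space and $\Cstar$-algebra pictures) and \cref{lem: limit of a system} (giving lift-compatibility). The mild subtlety worth emphasizing is that the unit-ball condition required by \cref{thm: Sato} concerns the quotient norm on $\prod F_n/\bigoplus F_n$, which for bounded sequences is $\limsup_n\|x_n\|$, so that any norm-one element of $\Cstar_{\sbt}\big(\Lim\big)$ admits a lift lying in the unit ball of the quotient without any rescaling gymnastics.
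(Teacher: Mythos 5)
Your proposal is correct and follows essentially the same route as the paper: both apply Sato's one-way CPAP (\cref{thm: Sato}) to the upward maps $\Theta^{-1}\circ\rho_n$, using \cref{thm: order zero coi} to see these are c.p.c.\ and then verifying the unit-ball image condition from the fact that any bounded lift of an element of $\Lim$ is carried to that element (your use of \cref{lem: limit of a system} versus the paper's coherence-plus-density computation is only a bookkeeping difference). No gaps.
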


\begin{proof}
Let $(F_n,\rho_{n+1,n})_n$ be a $\CPCstar$-system with complete order embedding 
$\Theta\coloneqq  \id_{\Lim }\colon\Cstar_{\sbt}\big(\Lim \big) \longrightarrow F_\infty$ from \cref{thm: order zero coi}. For each $m\geq 0$, we define $\varphi_m\coloneqq \Theta^{-1}\circ\rho_m\colon F_m\longrightarrow \Cstar_{\sbt}\big(\Lim \big)$. It follows from \cref{thm: order zero coi} that each $\varphi_m$ is a c.p.c. map. We denote  the sequence algebra $\prod_m \Cstar_{\sbt}\big(\Lim \big)/ \bigoplus_m \Cstar_{\sbt}\big(\Lim \big)$ by $\Cstar_{\sbt}\big(\Lim \big)_\infty$, and we write $\iota\colon \Cstar_{\sbt}\big(\Lim \big)\longrightarrow \Cstar_{\sbt}\big(\Lim \big)_\infty$ 
for the 
embedding as equivalence classes of constant sequences. 
Let $\Phi\colon F_\infty\longrightarrow \Cstar_{\sbt}\big(\Lim \big)_\infty$ be the c.p.c.\ map induced by the $\varphi_m$ as in \cref{thm: Sato} with 
$\Phi([(x_m)_m])=[(\varphi_m(x_m))_m]$. 
Note that 
for $k\geq 0$ and $x\in F_k$, we have 
\begin{align*}
\Phi(\rho_k(x))&=\Phi([(\rho_{m,k}(x))_{m>k}])\\&=[(\varphi_m(\rho_{m,k}(x)))_{m>k}]\\
&=[((\Theta^{-1}\circ\rho_m)(\rho_{m,k}(x))_{m>k}]\\
&=[(\Theta^{-1}\circ\rho_k(x))_{m>k}]\\
&=\iota\circ\Theta^{-1}\circ\rho_k(x)\\
&=\iota\circ\varphi_k(x).
\end{align*}
Since these elements are dense in $\Lim$, it follows that $\Phi(\bar{x})=\iota\circ\Theta^{-1}(\bar{x})$ for each $\bar{x}\in \Lim \subset F_\infty$.  
Since $\Theta^{-1}$ is isometric, that gives us  \begin{align*}\iota\big(\Cstar_{\sbt}\big(\Lim \big)^1\big) =\iota\circ\Theta^{-1}\big(\Lim ^1\big)= \Phi\big(\Lim ^1\big)\subset \Phi(F_\infty^1).\end{align*} 
Now with \cite[Theorem 5.1]{Sato2019} (as stated above in \cref{thm: Sato}), we conclude that $\Cstar_{\sbt}\big(\Lim \big)$ is nuclear. 
\end{proof}




\section{$\CPCstar$-systems from systems of c.p.c.\ approximations}\label{sect: cpap}

\noindent The goal of this section is a converse to \cref{thm: Sato}: Any separable nuclear $\Cstar$-algebra is $^*$-isomorphic to a $\CPCstar$-limit. Moreover, the $\CPCstar$-system will arise from a system of c.p.c.\ approximations. We first recall some facts  regarding systems of c.p.c.\ approximations of separable nuclear $\Cstar$-algebras.

\begin{definition}\label{def: summable}
Let $A$ be a separable nuclear $\Cstar$-algebra.

(i) 
A \emph{system of c.p.c.\ approximations} for $A$ is a sequence  $(A\xlongrightarrow{\psi_n}F_n\xlongrightarrow{\varphi_n}A)_n$, where the  $F_n$ are finite-dimensional $\Cstar$-algebras and  $A\xlongrightarrow{\psi_n}F_n\xlongrightarrow{\varphi_n}A$ are c.p.c.\ maps so that for all $a\in A$  \[\lim_n\| \varphi_n \circ \psi_n(a) - a\|=0 .\]

(ii) We call a system of c.p.c.\ approximations as above \emph{summable} if there exists a decreasing sequence $(\varepsilon_n)_n\in \ell^1(\N)_+^1$ such that for all $n>k\geq 0$
\begin{align*}
  \|\varphi_k-\varphi_n \circ \psi_n\circ\varphi_k\|<\varepsilon_n. \end{align*}
  
  (iii) We say a system of c.p.c.\ approximations as in (i) has \emph{approximately multiplicative downwards maps} if for any $a,b\in A$, we have  $\lim_n\|\psi_n(ab)-\psi_n(a)\psi_n(b)\|=0$.
  
  (iv) We say a system of c.p.c.\ approximations as in (i) has \emph{approximately order zero downwards maps} if for any $a,b\in A_+$ with $ab=0$, we have $\lim_n\|\psi_n(a)\psi_n(b)\|= 0$.

(v) Given a system of c.p.c.\ approximations as in (i), we  define for all $m>n\geq 0$
\begin{align*}
    \rho_{m,n}&\coloneqq \psi_m \circ \varphi_{m-1}\circ\hdots\circ\varphi_n\colon F_n\longrightarrow F_m\ 
    \text{ and }\\ \rho_{n,n}&\coloneqq \id_{F_n}. 
\end{align*}
thus obtaining an associated system $(F_n,\rho_{n+1,n})_n$. 
\end{definition}

\begin{remarks}\label{rmk: summable} 
(i) We will usually consider summable systems of c.p.c.\ approximations, in which case $\rho_{m,n}$ and $\psi_m\circ\varphi_n$ become arbitrarily close for $m>n$ sufficiently large. Moreover, for any $k\geq 0$ and $x\in F_k$, the sequence $(\varphi_n(\rho_{n,k}(x)))_n$ converges in $A$.

(ii) Asking for summability will not result in any loss of generality, since after passing to a subsystem we can always arrange this for any choice of $(\varepsilon_n)_n \in \ell^1(\N)^1_+$. Indeed, given a system $(A\xlongrightarrow{\psi_n}F_n\xlongrightarrow{\varphi_n}A)_n$ of c.p.c.\ approximations of a nuclear $\Cstar$-algebra $A$ and a decreasing sequence $(\varepsilon_j)\in \ell^1(\N)^1_+$, we can use compactness of the unit balls of the $F_n$ to find an increasing sequence $(n_j)_j \subset \N$ so that for all $n_k\geq 0$ 
and $j>k$,
    $$\|\varphi_{n_k}-\varphi_{n_j} \circ \psi_{n_j} \circ \varphi_{n_k}\|<\varepsilon_{n_j}.$$

(iii) For a system of c.p.c.\ approximations of a $\Cstar$-algebra $A$ as in \ref{def: summable} and  any $a\in A$, we have 
\[\lim_n\|a-a\varphi_n(1_{F_n})\|= 0.\]
To see this, it suffices to consider $a\in A_+^1$. Let $\varepsilon>0$, and choose $\eta>0$ so that $2\eta+\sqrt{3\eta}<\varepsilon$ and $N>0$ so that  $\|a-\varphi_n(\psi_n(a))\|<\eta$ and $\|a^2-\varphi_n(\psi_n(a^2))\|<\eta$ for all $n>N$. 
Then a consequence of Stinespring's Theorem (\cite[Lemma 3.5]{KW}), guarantees that $\|\varphi_n(\psi_n(a)b)-\varphi_n(\psi_n(a))\varphi_n(b)\|<\sqrt{3\eta}$ for any $b\in (F_n)^1$. Applying this to $b=1_{F_n}$ yields
\begin{align*}
    \|a-a\varphi_n(1_{F_n})\|&\leq 2\eta + \|\varphi_n(\psi_n(a))-\varphi_n(\psi_n(a))\varphi_n(1_{F_n})\|\\
    &= 2\eta + \|\varphi_n(\psi_n(a)1_{F_n})-\varphi_n(\psi_n(a))\varphi_n(1_{F_n})\|\\
    &<2\eta + \sqrt{3\eta}\\
    &<\varepsilon.
\end{align*}
\end{remarks}

One can always ask for approximately order zero downwards maps, as follows from \cite{BK97} in connection with \cite[Theorem 5]{Voi91}); see  \cite[Theorem 3.1]{BCW} for an explicit statement: 

\begin{theorem}\label{thm: asym order zero cpap}
For a separable $\Cstar$-algebra $A$, the following are equivalent. 
\begin{enumerate}[label=\textnormal{(\roman*)}]
    \item $A$ is nuclear.
    \item $A$ admits a system of c.p.c.\ approximations with approximately order zero downwards maps. 
\end{enumerate}  
\end{theorem}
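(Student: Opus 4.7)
The direction (ii)$\Longrightarrow$(i) is immediate -- any system of c.p.c.\ approximations witnesses the CPAP and hence nuclearity, regardless of further conditions on the downwards maps. For the substantive direction (i)$\Longrightarrow$(ii), my plan is to start from an arbitrary system of c.p.c.\ approximations $(A\xlongrightarrow{\psi_n}F_n\xlongrightarrow{\varphi_n}A)_n$ of $A$, which exists by nuclearity, and to replace each pair $(\psi_n,\varphi_n)$ by a new pair $(\tilde\psi_n,\tilde\varphi_n)$ in which the downwards map is progressively more order zero, while preserving $\tilde\varphi_n\circ\tilde\psi_n\to\id_A$ in point-norm.

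The technical engine will be a factorization lemma in the spirit of \cite[Theorem 5]{Voi91} and \cite{BK97}: for any c.p.c.\ map $\psi\colon A\longrightarrow F$ into a finite-dimensional $\Cstar$-algebra, any finite set $\mathcal{F}\subset A$, and any $\varepsilon>0$, one produces a finite-dimensional $\tilde F$, a c.p.c.\ order zero map $\tilde\psi\colon A\longrightarrow \tilde F$, and a c.p.c.\ map $\sigma\colon\tilde F\longrightarrow F$ with $\|\sigma\circ\tilde\psi(a)-\psi(a)\|<\varepsilon$ for $a\in \mathcal{F}$. I would construct this by decomposing $F=\bigoplus_i\M_{r_i}$ and, for each summand, approximating $\psi_i\colon A\longrightarrow\M_{r_i}$ on $\mathcal{F}$ by a finite sum $\sum_{l=1}^{N_i}\theta_{i,l}$ of c.p.c.\ order zero maps into $\M_{r_i}$. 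Starting from a Stinespring dilation $\psi_i(a)=V^*\pi(a)V$, the pieces $\theta_{i,l}$ arise by slicing with fine spectral projections of $V^*V$ that approximately commute with $\psi_i(\mathcal{F})$. Then $\tilde F=\bigoplus_{i,l}\M_{r_i}$ and $\tilde\psi=\bigoplus_{i,l}\theta_{i,l}$ is order zero (as a direct sum of order zero maps), while the blockwise sum $\sigma\colon\tilde F\longrightarrow F$, suitably normalized to retain complete contractivity, recovers $\psi$ on $\mathcal{F}$ up to $\varepsilon$.

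Granted the lemma, I would fix an increasing sequence of finite sets $\mathcal{F}_n\subset A$ with dense union together with a summable null sequence $(\varepsilon_n)_n$, apply the lemma to each $\psi_n$ with parameters $(\mathcal{F}_n,\varepsilon_n)$ to obtain $(\tilde F_n,\tilde\psi_n,\sigma_n)$, and set $\tilde\varphi_n\coloneqq\varphi_n\circ\sigma_n$. The resulting $(A\xlongrightarrow{\tilde\psi_n}\tilde F_n\xlongrightarrow{\tilde\varphi_n}A)_n$ is again a system of c.p.c.\ approximations. For the approximately order zero condition: given $a,b\in A_+$ with $ab=0$, for large $n$ one approximates both by $a',b'\in\mathcal{F}_n\cap A_+$ with $a'b'=0$; since $\tilde\psi_n$ is genuinely order zero, $\tilde\psi_n(a')\tilde\psi_n(b')=0$, and uniform boundedness together with norm continuity forces $\|\tilde\psi_n(a)\tilde\psi_n(b)\|\to 0$.

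The hard part will be the factorization lemma, and specifically the order zero decomposition of a matrix-valued c.p.c.\ map. The delicate point is that spectral projections of $V^*V$ in general need not even approximately commute with $\psi_i(\mathcal{F})$, so one cannot naively slice $\psi_i$ through these projections. The standard route around this is to pass to a sufficiently large matrix amplification $\M_{r_i}\otimes\M_k$ and invoke a quasicentral approximate unit argument (Voiculescu's technique from \cite{Voi91}) to produce projections with the required approximate centrality; this is the genuine content that upgrades the raw CPAP to a system with approximately order zero downwards maps.
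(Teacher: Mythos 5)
Your reduction of (i)$\Longrightarrow$(ii) to a finite-set factorization lemma is a sensible plan, and (ii)$\Longrightarrow$(i) is indeed immediate; but the mechanism you propose for the lemma breaks down at exactly the step you yourself single out as the genuine content. Compressing $\psi_i$ by spectral projections of $V^*V$ -- or by any projections chosen to approximately commute with $\psi_i(\mathcal{F})$ inside $\M_{r_i}$ -- does not produce order zero maps: a compression of a c.p.c.\ map is just another c.p.c.\ map, and order zero behaviour of a compression $a\mapsto p\,\pi(a)\,p$ would have to come from approximate centrality of $p$ with respect to the Stinespring representation $\pi$, not from commutation with the image $\psi_i(\mathcal{F})$. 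On the other hand, finite-rank projections which are approximately central for $\pi(\mathcal{F})$ and approximately contain the range of $V$ cannot be found in general: such compressions are approximately \emph{multiplicative}, and running them along the given system would produce asymptotically multiplicative, asymptotically isometric c.p.c.\ maps into matrix algebras, i.e., quasidiagonality of $A$ -- false for $\mathcal{O}_2$ or the Toeplitz algebra, both nuclear. No matrix amplification or quasicentral approximate unit argument can produce such projections; what Arveson/Voiculescu quasicentrality yields are approximately central \emph{positive} finite-rank contractions, and the passage from projections (the multiplicative, quasidiagonal setting of \cite{BK97}) to positive contractions (the order zero setting of this theorem) is precisely the point at issue. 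Independently of all this, your recovery map $\sigma$, the blockwise sum $\bigoplus_{l}\M_{r_i}\longrightarrow\M_{r_i}$, has norm equal to the number of summands, and renormalizing it to be contractive destroys $\sigma\circ\tilde{\psi}\approx\psi$; so even granting the order zero pieces, the asserted factorization is not achieved.

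For comparison: the paper does not reprove this statement but quotes it (\cite[Theorem 3.1]{BCW}), and the underlying argument is the cone trick indicated by its citations. Since $CA=C_0((0,1],A)$ is nuclear and quasidiagonal (the latter by \cite[Theorem 5]{Voi91}), \cite{BK97} provides c.p.c.\ approximations $CA\longrightarrow F_n\longrightarrow CA$ with approximately multiplicative downwards maps; composing with the canonical order zero embedding $\iota\colon A\longrightarrow CA$ and with $\mathrm{ev}_1\colon CA\longrightarrow A$ gives approximations of $A$ whose downwards maps $\psi_n\circ\iota$ are approximately order zero, because $\iota(a)\iota(b)=0$ whenever $a,b\in A_+$ are orthogonal. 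If you prefer to stay close to your Stinespring picture, the salvage is to abandon the decomposition into exactly order zero summands: take a single positive finite-rank quasicentral contraction $e$ with $eV\approx V$, use $a\mapsto e^{1/2}\pi(a)e^{1/2}$, viewed as a map into the finite-dimensional algebra of operators on the range of $e$, as the new downwards map (it is approximately order zero on the prescribed orthogonal pairs), and recover $\psi$ via $\sigma=V^*(\cdot)V$, which is genuinely c.p.c. Finally, a smaller point: \cref{def: summable}(iv) quantifies over all orthogonal $a,b\in A_+$, and a fixed dense set need not contain orthogonal approximants of a given orthogonal pair; the standard fix is to run your finite-set control over the pairs $(c_+,c_-)$ for $c$ ranging over a countable dense subset of the self-adjoint part of $A$.
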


\begin{lemma}\label{prop: bdd below}
Let $(A\xlongrightarrow{\psi_n}F_n\xlongrightarrow{\varphi_n}A)_n$ be a system of c.p.c.\ approximations of a separable nuclear $\Cstar$-algebra $A$. 
Then the induced map $\Psi\colon A\longrightarrow F_\infty$ given for all $a\in A$ by
\begin{align*}
 \Psi(a)\coloneqq [(\psi_n(a))_n],
\end{align*}
 is a complete order embedding. 

If the system of approximations is summable, then we have  $\Psi(A)=\Lim$, i.e., the image of $\Psi$ agrees with the limit of the associated system in the sense of \emph{Definitions \ref{def: summable}(v)} and \emph{\ref{def: limit}}. 
\end{lemma}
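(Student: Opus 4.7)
The plan splits into two independent claims. For the first, $\Psi$ is automatically c.p.c.: positivity and contractivity of each $\psi_n$ at every matrix level pass through the product and the quotient. By Remark \ref{rmk: folklore 1}(ii), since $\Psi$ is defined on the whole of $A$, it suffices to verify that $\Psi$ is completely isometric. Fix $r\geq 1$ and $\bar a\in \M_r(A)$. The quotient norm gives $\|\Psi^{(r)}(\bar a)\|_{\M_r(F_\infty)} = \limsup_n \|\psi_n^{(r)}(\bar a)\| \leq \|\bar a\|$ from complete contractivity of $\psi_n$, while pointwise convergence $\varphi_n^{(r)}\circ\psi_n^{(r)}(\bar a)\to \bar a$ together with complete contractivity of $\varphi_n$ yields $\|\bar a\|\leq \liminf_n \|\psi_n^{(r)}(\bar a)\|$, so the two limits coincide.

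The second claim rests on a telescoping estimate under summability: for all $n>k$ and $x\in F_k$,
\[
\|\rho_{n,k}(x) - \psi_n\circ\varphi_k(x)\|\;\leq\;\|x\|\sum_{j=k+1}^{n-1}\varepsilon_j.
\]
To derive this I would expand the alternating composition $\rho_{n,k} = \psi_n\circ\varphi_{n-1}\circ\psi_{n-1}\circ\cdots\circ\psi_{k+1}\circ\varphi_k$, set $b_j\coloneqq(\varphi_j\circ\psi_j)\circ\cdots\circ(\varphi_{k+1}\circ\psi_{k+1})\circ\varphi_k(x)\in A$ for $k\leq j\leq n-1$, and telescope $b_{n-1}-b_k$. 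The key point is that each $b_{j-1}$ lies in the image of $\varphi_{j-1}$ with a preimage of norm at most $\|x\|$, so the summability bound $\|\varphi_{j-1}-\varphi_j\circ\psi_j\circ\varphi_{j-1}\|<\varepsilon_j$ yields $\|b_j-b_{j-1}\|<\varepsilon_j\|x\|$ at each step. Passing this estimate into $F_\infty$ via $\rho_n\circ\rho_{n,k}=\rho_k$ produces the uniform bound
\[
\|\rho_k(x) - \Psi(\varphi_n(\rho_{n,k}(x)))\|\;\leq\;\|x\|\sum_{j>n}\varepsilon_j
\]
for every $n\geq k$.

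Both inclusions defining $\Psi(A) = \Lim$ then fall out. For $\Lim\subseteq\Psi(A)$: for any $\rho_k(x)\in\bigcup_n\rho_n(F_n)$, the sequence $(\varphi_n(\rho_{n,k}(x)))_n$ is Cauchy in $A$ by Remark \ref{rmk: summable}(i), and its limit $a\in A$ satisfies $\Psi(a)=\rho_k(x)$ by the displayed estimate together with continuity of $\Psi$. Closedness of $\Psi(A)$, which follows from $\Psi$ being isometric, extends this to all of $\Lim$. For the reverse inclusion: given $a\in A$, the pointwise CPAP gives $\Psi(a)=\lim_k \Psi(\varphi_k(\psi_k(a)))$, and the estimate (applied with $n=k$ and $x=\psi_k(a)$) places each $\Psi(\varphi_k(\psi_k(a)))$ within $\|a\|\sum_{j>k}\varepsilon_j$ of $\rho_k(\psi_k(a))\in\Lim$, so $\Psi(a)\in\Lim$. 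The main obstacle is the telescoping estimate: one must carefully track that the intermediate elements $b_{j-1}$ remain in the range of $\varphi_{j-1}$ with a uniformly bounded preimage, so that summability can be invoked at every level; otherwise the bound would depend on accumulated norms and fail to collapse to a tail of the $\ell^1$ sequence $(\varepsilon_j)_j$.
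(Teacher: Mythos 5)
Your proposal is correct and follows essentially the same route as the paper: complete isometry of $\Psi$ via contractivity of the $\psi_n$, $\varphi_n$ together with the (entrywise) CPAP, then Remark \ref{rmk: folklore 1}(ii), and for the summable case the same telescoping use of $\|\varphi_{j-1}-\varphi_j\circ\psi_j\circ\varphi_{j-1}\|<\varepsilon_j$ that underlies Remark \ref{rmk: summable}(i) and the paper's estimate \eqref{eq: summ 2}. The only cosmetic difference is that you spell out the telescoping bound explicitly and verify $\Psi(A)\subseteq\Lim$ by estimating directly in $F_\infty$, whereas the paper first shows density of $\{\lim_n\varphi_n\circ\rho_{n,k}(x)\}$ in $A$ and then applies the isometry of $\Psi$.
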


\begin{proof}
Since each $\psi_n$ is c.p.c., it follows that $\Psi$ is a c.p.c. map.
Moreover, since the maps $\psi_n,\varphi_n$ are all c.p.c., 
it follows that \[\textstyle\liminf_n \|\psi^{(r)}_n(a)\|\geq \|a\|\] for all $r\geq 1$ and $a\in \M_r(A)$. Hence $\|\Psi^{(r)}(a)\|\geq \|a\|$ for all $r\geq 0$ and $a\in \M_r(A)$,
and so $\Psi$ is completely isometric and hence a complete order embedding. 

Now we assume moreover that the system is summable with respect to some decreasing sequence $(\varepsilon_n)_n\in \ell^1(\N)^1_+$. 
Let $k\geq 0$ and $x\in F_k$, and set $a\coloneqq \lim_n\varphi_n(\rho_{n,k}(x))$. (Recall that the limit exists by Remark \ref{rmk: summable}(i).) 
Then we have 
\begin{align}\label{eq: approx coh}
    \rho_k(x)&=[(\rho_{m,k}(x))_{m>k}]\\ &=[(\psi_m(\varphi_{m-1}(\rho_{m-1,k}(x))))_{m>k}] \nonumber\\ &=[(\psi_m(a))_m] \nonumber\\
    &=\Psi(a), \nonumber
\end{align}
and so 
\begin{align}
    \textstyle{\bigcup_n} \rho_n(F_n)=\{\Psi\big(\lim_m \varphi_m \circ \rho_{m,n}(x)\big) \mid k\geq 0,\ x\in F_n\}\subset \Psi(A).\label{eq: approx coh'}
\end{align} On the other hand, it follows from summability 
that 
\begin{align*}
    \|\varphi_m\circ \rho_{m,n} \circ \psi_n-\varphi_n\circ\psi_n\|<  \textstyle{\sum}_{j=n+1}^m\varepsilon_j. 
\end{align*}
for all $m>n\geq 0$. Hence for any given $a\in A$ and $\varepsilon>0$, we can find $M>0$ such that for all $m>n>M$, 
\begin{align}\label{eq: summ 2}
&\|\varphi_m \circ \rho_{m,n} \circ \psi_n(a)-a\|\\
&\leq \|\varphi_m \circ \rho_{m,n} \circ \psi_n(a)-\varphi_n\circ\psi_n(a)\|+\|\varphi_n\circ\psi_n(a)-a\|\nonumber \\
&<\varepsilon. \nonumber
    \end{align}
    From this, we conclude that the set $\{\lim_n \varphi_n \circ \rho_{n,k}(x) \mid k\geq 0,\ x\in F_k\}$ is dense in $A$, and so by \eqref{eq: approx coh'}, 
    $\textstyle{\bigcup_n} \rho_n(F_n)$ 
is dense in $\Psi(A)$. Since $\Psi$ is isometric, that means $\Psi(A)=\overline{\textstyle{\bigcup_n} \rho_n(F_n)}=\Lim$.    
\end{proof}

In the situation of \cref{prop: bdd below}, if the system of c.p.c.\ approximations has approximately order zero downwards maps, then the map $\Psi$ clearly is order zero. In this case, since the codomain is $F_\infty$, we obtain refined information about the positive contraction appearing in the structure theorem for order zero maps (\cref{thm: structure thm}).

\begin{proposition}\label{prop: vN quotient}
Let $(A\xlongrightarrow{\psi_n}F_n\xlongrightarrow{\varphi_n}A)_n$ be a system of c.p.c.\ approximations of a separable nuclear $\Cstar$-algebra $A$ with approximately order zero downwards maps, and let $\Psi\colon A\longrightarrow F_\infty$ be the order zero complete order embedding from \emph{\cref{prop: bdd below}}. 
We write $C$ for the $\Cstar$-algebra generated by $\Psi(A)$ in $F_\infty$ and $J_C=\{b\in F_\infty \mid b C\cup C b\subset C\}$ for the idealizer of $C$ in $F_\infty$, and we denote by $h$ the element in $\mathcal{M}(C)$ provided by the structure theorem for order zero maps; cf.\ \emph{Theorem \ref{thm: structure thm}}. 

Then there exists $\bar{h}\in (J_C)_+^1 \subset F_\infty$ so that $\bar{h}c=hc$ 
for all $c\in C$. In particular, for all $a,b\in A$, 
 \begin{align}
    \bar{h}\Psi(a)&=\Psi(a)\bar{h}, \label{eq: h1}  \\
    \|\bar{h}\Psi(a)\|&=\|\Psi(a)\|=\|a\|,\ \text{ and } \label{eq: h2} \\
    \bar{h}\Psi(ab)&=\Psi(a)\Psi(b). \label{eq: h3}
 \end{align}
 Moreover, \eqref{eq: h1}, \eqref{eq: h2}, and \eqref{eq: h3} still holds when replacing $\bar{h}$ by any $\bar{e}\in (F_\infty)_+^1$ such that $\bar{e} \Psi(a)=\bar{h}\Psi(a)$ for all $a\in A$.
\end{proposition}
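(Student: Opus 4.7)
The natural candidate for $\bar{h}$ is a \emph{diagonal lift} of $h$ through the approximations $\psi_n$. By separability of $A$, fix a countable increasing approximate identity $(u_k)_k\subset A_+^1$ and a dense sequence $(a_j)_j\subset A$. Applied to the order zero map $\Psi$, \cref{thm: structure thm} gives $\Psi(a)=h\pi_\Psi(a)$ for some $^*$-homomorphism $\pi_\Psi\colon A\longrightarrow \mathcal{M}(C)$, with $h$ the strong operator limit of $\Psi(u_k)$ in $C^{**}$. The order zero identity for the unitization of $\Psi$ (cf.\ \cref{cor: order zero for unital}) gives $\Psi(u_k)\Psi(a)=h\Psi(u_ka)$, which together with $\|u_ka-a\|\to 0$ and the isometry of $\Psi$ yields
\[
\lim_{k\to\infty}\|\Psi(u_k)\Psi(a)-h\Psi(a)\|_{F_\infty}=0\quad\text{for every }a\in A.
\]

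The first substantive step is to package these convergences into a single element of $F_\infty$ via a standard diagonal argument. For each $j$ fix a lift $(c_j^{(n)})_n\in\prod_n F_n$ of $h\Psi(a_j)\in C$. Inductively choose $k_1<k_2<\cdots$ and $N_1<N_2<\cdots$ so that for every $m$, every $j\le m$ and every $n\ge N_m$,
\[
\|\psi_n(u_{k_m})\psi_n(a_j)-c_j^{(n)}\|<1/m.
\]
Set $k(n):=k_m$ for $N_m\le n<N_{m+1}$, and define $\bar{h}:=[(\psi_n(u_{k(n)}))_n]\in F_\infty$. Since each component lies in $(F_n)_+^1$ we have $\bar{h}\in(F_\infty)_+^1$; and for each fixed $j$ the estimate above forces $\bar{h}\Psi(a_j)=h\Psi(a_j)$. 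By density of $(a_j)_j$ and norm-continuity, $\bar{h}\Psi(a)=h\Psi(a)$ for every $a\in A$. Associativity and induction give $\bar{h}\cdot\Psi(a_1)\cdots\Psi(a_m)=h\cdot\Psi(a_1)\cdots\Psi(a_m)$, and taking adjoints (using self-adjointness of $\bar{h}$ and $h$) gives $c\bar{h}=ch$ on the dense $^*$-subalgebra of $C$ generated by $\Psi(A)$, hence on all of $C$ by continuity. Since $h\in\mathcal{M}(C)$, this places $\bar{h}$ in the idealizer, so $\bar{h}\in(J_C)_+^1$ and $\bar{h}c=hc$ for every $c\in C$.

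It then remains to verify the three identities. Identity \eqref{eq: h1} follows from $h\in C'\cap\mathcal{M}(C)$: $\bar{h}\Psi(a)=h\Psi(a)=\Psi(a)h=\Psi(a)\bar{h}$. Identity \eqref{eq: h3} is immediate from the order zero relation: $\bar{h}\Psi(ab)=h\Psi(ab)=\Psi(a)\Psi(b)$. For \eqref{eq: h2}, iterating $\bar{h}c=hc$ yields $\bar{h}^n\Psi(a)=h^n\Psi(a)$ for every $n\ge 1$, so
\[
\|\bar{h}\Psi(a)\|^2=\|\Psi(a)^*\bar{h}^2\Psi(a)\|=\|h^4\pi_\Psi(a^*a)\|.
\]
Passing to the unital commutative $C^*$-subalgebra of $\mathcal{M}(C)$ generated by $h$, $\pi_\Psi(a^*a)$ and $1$, identified with $C(K)$ for some compact $K\subset[0,1]\times[0,\|a\|^2]$, the identity $\|h\pi_\Psi(a^*a)\|=\|\Psi(a^*a)\|=\|a\|^2$ combined with $h\le 1$ forces the supremum $\sup_{(s,t)\in K}st=\|a\|^2$ to be attained at a point with $s=1$, whence $\sup_{(s,t)\in K}s^nt=\|a\|^2$ for every $n\ge 1$; that is, $\|h^n\pi_\Psi(a^*a)\|=\|a\|^2$, and so $\|\bar{h}\Psi(a)\|=\|a\|$. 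The ``moreover'' clause is immediate: $\bar{e}\Psi(a)=\bar{h}\Psi(a)$ for all $a\in A$, combined with adjointing, gives $\Psi(a)\bar{e}=\Psi(a)\bar{h}$, and each of \eqref{eq: h1}, \eqref{eq: h2}, \eqref{eq: h3} transfers verbatim to $\bar{e}$. The main technical burden is the diagonal construction in the second paragraph, which must simultaneously track countably many lifts of elements of $F_\infty$ while exploiting the asymptotic approximate-identity behavior of $(\psi_n)_n$; the remaining steps are clean bookkeeping around the structure theorem.
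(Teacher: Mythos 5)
Your proof is correct, but it reaches the key existence statement by a genuinely different route than the paper. The paper obtains $\bar{h}$ abstractly: it shows that the canonical map $\theta\colon J_C\longrightarrow\mathcal{M}(C)$ is \emph{surjective}, by realizing $\mathcal{M}(D)$ (for a separable lift $D\subset\prod F_n$ of $C$) as an idealizer inside $\prod F_n$ and invoking the noncommutative Tietze extension theorem to push $\mathcal{M}(D)$ onto $\mathcal{M}(C)$; any positive contractive preimage of $h$ then serves as $\bar{h}$. You instead build the lift explicitly as a diagonal element $\bar{h}=[(\psi_n(u_{k(n)}))_n]$ coming from a countable approximate identity of $A$, using the unitized order zero identity $\Psi(u_k)\Psi(a)=h\Psi(u_ka)$ to make $\bar{h}\Psi(a)=h\Psi(a)$ on a countable dense set, and then deduce $\bar{h}c=hc$, $c\bar{h}=ch$ on $C$ and hence $\bar{h}\in(J_C)_+^1$. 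Your construction is more elementary and constructive (no multiplier-lifting machinery, and the lift has a concrete form adapted to the downward maps), while the paper's argument proves the stronger structural fact that \emph{every} element of $\mathcal{M}(C)$ lifts to the idealizer $J_C$, which is more than is needed here but of independent interest. You also treat \eqref{eq: h2} differently: a joint-spectrum/Gelfand argument for the commutative algebra generated by $h$ and $\pi_\Psi(a^*a)$, versus the paper's Schwarz-inequality estimate combined with the order bound $\|a\|^2h\geq\Psi(aa^*)$ from \cref{cor: order zero for unital}; both are valid, and the commutation $h\pi_\Psi(a)=\pi_\Psi(a)h$ you use implicitly does follow from the two displayed factorizations in \cref{thm: structure thm} (apply them to $a^*$ and take adjoints), so there is no gap there. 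The diagonal selection of $k_m$ and $N_m$ is legitimate, since for each $m$ the finitely many conditions are achieved first by choosing $k_m$ with $\|u_{k_m}a_j-a_j\|$ small and then choosing $N_m$ from the corresponding $\limsup$ bounds; the "moreover" clause is handled essentially as in the paper.
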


\begin{proof}
Since $C$ is an ideal in $J_C$, there is a unique $^*$-homorphism $\theta\colon J_C\longrightarrow \mathcal{M}(C)$ extending the canonical embedding $C \longrightarrow\mathcal{M}(C)$ so that $\theta(b)c=bc$ and $c\theta(b)=cb$ for all $b\in J_C$ and $c\in C$. We claim that $\theta$ is surjective. 
Indeed, let $\pi\colon \prod F_n\longrightarrow F_\infty$ denote the quotient map. Since $C$ is separable, we can find a separable sub-$\Cstar$-algebra $D\subset \prod F_n$ with $\pi(D)=C$. Though $D$ may be degenerate in $\prod F_n$ (in the sense that its support projection is not $1_{\prod F_n}$), it is non-degenerate in its ultra-weak closure $\overline{D}^{\sigma\text{-wk}}\subset \prod F_n$ (in the same sense), and so \cite[Proposition 2.4]{APT} tells us we can 
identify $\mathcal{M}(D)$ with $\{b\in \overline{D}^{\sigma\text{-wk}} \mid b D\cup D b\subset D\}\subset \prod F_n$, 
and with this identification, we have $\pi(\mathcal{M}(D))\subset J_C$. 
Since $D$ is separable and $\pi|_D\colon D\longrightarrow C$ is a surjection, 
\cite[Theorem 4.2]{APT} guarantees a surjective $^*$-homomorphism $\hat{\pi}\colon \mathcal{M}(D)\longrightarrow \mathcal{M}(C)$ so that for all $d\in D$ and $x\in \mathcal{M}(D)$ we have 
$\hat{\pi}(x)\pi(d)=\pi(xd)$ and $\pi(d)\hat{\pi}(x)=\pi(dx)$. All this together gives us the following diagram, which we claim commutes. 
\[
\begin{tikzcd}
D\arrow[two heads, swap]{d}{\pi|_D}\arrow[r,phantom, "\rotatebox{360}{$\subseteq$}", description] & \mathcal{M}(D)\arrow[swap]{d}{\pi|_{\mathcal{M}(D)}}\arrow[two heads]{rd}{\hat{\pi}} & \\
C \arrow[r,phantom, "\rotatebox{360}{$\subseteq$}", description] & J_C \arrow{r}{\theta} & \mathcal{M}(C)
\end{tikzcd}
\]
To see that $\hat{\pi}=\theta\circ\pi|_{\mathcal{M}(D)}$, let $x\in \mathcal{M}(D)$ and $c\in C$, and choose $d\in D$ such that $\pi(d)=c$. Then
\begin{align*}
\theta(\pi(x))c=\pi(x)c=\pi(x)\pi(d)=\pi(xd)=\hat{\pi}(x)\pi(d)=\hat{\pi}(x)c, 
\end{align*}
and likewise $c\theta(\pi(c))=c\hat{\pi}(x)$. 
Hence $\hat{\pi}=\theta\circ\pi|_{\mathcal{M}(D)}$, and $\theta$ is surjective as desired. 
 
Now let $\bar{h}\in (J_C)_+^1$ be a lift of $h$. 
Then we have for each $c\in C$ 
\begin{align*}
    \bar{h}c=\theta(\bar{h})c=hc\hspace{.5 cm} \text{ and } \hspace{.5 cm}
    c\bar{h}=c\theta(\bar{h})=ch.
\end{align*}
It then follows from our choice of $h$ 
that for all $a,b\in A$, 
\begin{align*}
\bar{h}\Psi(ab)&=h\Psi(ab)=\Psi(a)\Psi(b),\ \text{ and }\\
\bar{h}\Psi(a)&=h\Psi(a)=\Psi(a)h=\Psi(a)\bar{h}. 
\end{align*}
Since $\Psi$ is isometric, $\|a\|^2h=\|\Psi(aa^*)\|h\geq \Psi(aa^*)$ for all $a\in A$ (see Remark \ref{cor: order zero for unital}(i)), and so
\begin{align*}
    \|a\|^4&=\|\Psi(a)\|^4\\
    &=\|\Psi(a)^*\Psi(a)\Psi(a)^*\Psi(a)\|\\ 
    &\leq \|\Psi(a)^*\Psi(aa^*)\Psi(a)\|\\
    &\leq \|\Psi(a)^*\|a\|^2h\Psi(a)\|\\
    &\leq \|a\|^3\|h\Psi(a)\|.
\end{align*}
In particular, for all $a\in A$
\begin{align*}
   \|\bar{h}\Psi(a)\|= \|h\Psi(a)\|=\|\Psi(a)\|=\|a\|. 
\end{align*}

Finally, suppose $\bar{e}\in (F_\infty)_+^1$ and $\bar{e} \Psi(a)=\bar{h}\Psi(a)=h\Psi(a)$ for all $a\in A$. Then for each $a\in A$, 
\[\|\bar{e} \Psi(a)-\Psi(a) \bar{e}\|=\|h\Psi(a)-\Psi(a) \bar{e}\|=\|h\Psi(a)^*-\bar{e} \Psi(a)^*\|=0,\]
 and it follows that $\Psi(a)\bar{e} = \Psi(a)\bar{h} = \Psi(a)h$ for all $a\in A$ as well.  
Since this extends to all $c\in C$, it follows that $\bar{e}\in (J_C)_+^1$ and $\theta(\bar{e})=h$. Since $\bar{e}$ is also a lift of $h$, the preceding arguments establish \eqref{eq: h1}, \eqref{eq: h2}, and \eqref{eq: h3} for $\bar{e}$.  
\end{proof}

Now we are ready to prove the main result of this section.

\begin{theorem}\label{thm: main 2}
Let $(A\xlongrightarrow{\psi_n}F_n\xlongrightarrow{\varphi_n}A)_n$ be a system of c.p.c.\ approximations of a separable nuclear $\Cstar$-algebra $A$ with approximately order zero downwards maps. After possibly passing to a subsystem of approximations, the associated system $(F_n,\rho_{n+1,n})_n$ is a $\CPCstar$-system in the sense of \emph{\cref{def: cpcstar}}, and the map $\Psi\colon A\longrightarrow F_\infty$ from \emph{\cref{prop: bdd below}} is an order zero complete order embedding with $\Psi(A)=\Lim $.

Upon composing $\Psi$ with the inverse of $\Theta$ from \emph{Proposition \ref{thm: order zero coi}}, we obtain an isomorphism $\Theta^{-1}\circ\Psi\colon A\longrightarrow \Cstar_{\sbt}\big(\Lim\big)$. In particular, any separable nuclear $\Cstar$-algebra is $^*$-isomorphic to a $\CPCstar$-limit. 
\end{theorem}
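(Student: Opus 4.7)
The plan is to combine the results of Sections~\ref{sect: preliminaries} and~\ref{sect: cpc systems} with the available hypothesis. I would first invoke \cref{rmk: summable}(ii) to pass to a summable subsystem of the given c.p.c.\ approximations; the approximately-order-zero property of the downwards maps is inherited by any subsystem. In this summable setting, \cref{prop: bdd below} ensures that $\Psi\colon A\longrightarrow F_\infty$ is a complete order embedding with $\Psi(A)=\Lim$. Approximate order-zero-ness of the $\psi_n$ upgrades $\Psi$ to a c.p.c.\ order zero map (for $a,b\in A_+$ with $ab=0$ one has $\|\psi_n(a)\psi_n(b)\|\to 0$, hence $\Psi(a)\Psi(b)=0$), so \cref{prop: vN quotient} yields an element $\bar h\in (F_\infty)^1_+$ satisfying $\bar h\Psi(ab)=\Psi(a)\Psi(b)$, $\bar h\Psi(a)=\Psi(a)\bar h$, and $\|\bar h\Psi(a)\|=\|a\|$ for all $a,b\in A$. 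This $\bar h$ encodes the order-zero structure of $\Psi$ and serves as the reference against which the various ``order units'' $\rho_{m,l}(1_{F_l})$ are compared.

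The main technical step is the verification of \eqref{eq: asym perp}. Fix $k\geq 0$, $x,y\in F_k$, and set $a_x:=\lim_m \varphi_m\rho_{m,k}(x)$ and $a_y$ analogously, so that by \cref{prop: bdd below} one has $\rho_k(x)=\Psi(a_x)$, $\rho_k(y)=\Psi(a_y)$, and therefore $\rho_k(x)\rho_k(y)=\bar h\Psi(a_xa_y)$. A direct telescoping from summability gives
\[
\|\rho_{m,l}(1_{F_l})-\psi_m(\varphi_l(1_{F_l}))\|\;\leq\;\sum_{j=l+1}^{m-1}\varepsilon_j,
\]
so that $\rho_l(1_{F_l})=[(\rho_{m,l}(1_{F_l}))_m]\in F_\infty$ differs from $\Psi(\varphi_l(1_{F_l}))$ by $O(\sum_{j>l}\varepsilon_j)$. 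Combined with \cref{rmk: summable}(iii), which says that $\varphi_l(1_{F_l})$ asymptotically acts as a unit on $A$, this yields $\rho_l(1_{F_l})\Psi(a)\longrightarrow \bar h\Psi(a)$ in $F_\infty$ as $l\to\infty$, for each $a\in A$. Writing $\rho_n(\rho_{n,k}(x)\rho_{n,k}(y))=\Psi(w_n)$ for some $w_n\in A$ (using $\Psi(A)=\Lim$), the difference appearing in \eqref{eq: asym perp} takes, at the level of $F_\infty$, the shape
\[
\rho_l(1_{F_l})\cdot\rho_n(\rho_{n,k}(x)\rho_{n,k}(y))-\rho_k(x)\rho_k(y)\;=\;\bigl(\rho_l(1_{F_l})-\bar h\bigr)\Psi(w_n)+\bar h\,\Psi(w_n-a_xa_y),
\]
whose first term vanishes as $l\to\infty$ uniformly over the bounded family $\{w_n\}$, while the second has norm $\|w_n-a_xa_y\|$ by the norm identity for $\bar h$. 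The remaining step is $w_n\to a_xa_y$: the summability telescope gives $w_n\approx \varphi_n(\rho_{n,k}(x)\rho_{n,k}(y))$, and the convergence of the latter to $a_xa_y$ is extracted from the approximately-order-zero structure of the $\psi_n$ together with $\varphi_n\psi_n\to\mathrm{id}_A$. Once this $F_\infty$-level convergence is in hand, a standard diagonal argument (if necessary by a further passage to a subsystem) converts it into the coordinate-wise bound~\eqref{eq: asym perp}.

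With $(F_n,\rho_{n+1,n})_n$ now a $\CPCstar$-system, \cref{thm: order zero coi} supplies the order-zero complete order embedding $\Theta\colon \Cstar_{\sbt}\big(\Lim\big)\longrightarrow F_\infty$ with image $\Lim$, and \cref{prop: CPC*-algebras'} then promotes $\Theta^{-1}\circ\Psi$ to a $^*$-isomorphism from $A$ onto $\Cstar_{\sbt}\big(\Lim\big)$, yielding the theorem. The principal obstacle is the convergence $\varphi_n(\rho_{n,k}(x)\rho_{n,k}(y))\to a_xa_y$: the maps $\varphi_n$ are only c.p.c., not even approximately multiplicative, so one cannot simply split $\varphi_n$ over the product. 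The argument instead has to harness the approximately-order-zero structure of the $\psi_n$ via a Cauchy--Schwarz- or Stinespring-type estimate for the multiplicativity defect of $\varphi_n$ on the image of $\psi_n$, in coordinated tandem with summability, to pin the limit down to $a_xa_y$.
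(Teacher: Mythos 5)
Your overall architecture does track the paper's: pass to a summable subsystem, use \cref{prop: bdd below} to get $\Psi$ with $\Psi(A)=\Lim$, note that approximate order-zero-ness of the $\psi_n$ makes $\Psi$ order zero, and bring in $\bar h$ from \cref{prop: vN quotient} so that $\rho_k(x)\rho_k(y)=\bar h\,\Psi(a_xa_y)$. But the decisive step is exactly the one you defer: the convergence $w_n\to a_xa_y$, i.e.\ essentially $\varphi_n\big(\psi_n(a)\psi_n(b)\big)\to ab$, equivalently $\varphi_n\big(h_n\psi_n(c)\big)\to c$. Saying that the argument ``has to harness'' a Cauchy--Schwarz-type estimate is a correct diagnosis but not a proof. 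This is where the paper does its real work: it inductively selects a subsystem along which $\|\varphi_n(\psi_n(\varphi_{n-1}(1_{F_{n-1}})^i))-\varphi_{n-1}(1_{F_{n-1}})^i\|<\varepsilon_n^2/3$ for $i=1,2$ (condition \eqref{eq: second approx3}), so that \cite[Lemma 7.11]{ER02} (equivalently \cite[Lemma 3.5]{KW}) controls the multiplicativity defect of $\varphi_n$ on products with $\rho_{n,n-1}(1_{F_{n-1}})=\psi_n(\varphi_{n-1}(1_{F_{n-1}}))$; combining this with $\psi_m(\varphi_{n}(1_{F_{n}}))\psi_m(a)\approx h_m\psi_m(a)$ (from \eqref{eq: approx 1}) and $\varphi_n(1_{F_n})a\approx a$ yields the paper's Claims 1--4, from which \eqref{eq: asym perp} follows. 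Without carrying out this multiplicative-domain argument (and the accompanying choice of subsystem, or at least a double-limit version of it for the original summable system), your central claim remains an assertion.

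There are also two quantifier problems in your reduction. First, the claim that $(\rho_l(1_{F_l})-\bar h)\Psi(w_n)$ vanishes as $l\to\infty$ ``uniformly over the bounded family $\{w_n\}$'' is false as stated: $\rho_l(1_{F_l})\Psi(a)\to\bar h\Psi(a)$ is only a pointwise statement on $A$, and uniformity on bounded sets is not available; the fix (split off $\Psi(w_n-a_xa_y)$ first) presupposes the very convergence $w_n\to a_xa_y$ that is missing. Second, even granting the $F_\infty$-level convergence, \cref{def: cpcstar} requires the bound \eqref{eq: asym perp} at the level of representatives, for \emph{all} $m>n,l>M$, whereas a norm estimate in $F_\infty$ only controls $\limsup_m$ of the coordinate norms; since $\rho_{m',m}$ is not multiplicative, there is no monotonicity in $m$ to upgrade this. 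The paper sidesteps the issue by proving all of its claims coordinate-wise with summable errors built into the chosen subsystem; your appeal to ``a standard diagonal argument'' would have to be spelled out (it is plausible, because the maps $\rho_{m,n}$ compose coherently so a subsystem's connecting maps are again of the form $\rho_{n_{j'},n_j}$, but it is a genuine further argument, not routine bookkeeping).
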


\begin{proof}
As we saw in Remark \ref{rmk: summable}(ii), after possibly passing to a subsystem, we may assume that the system of approximations (and any further subsystem) is summable.
    
 Let $\Psi\colon A\longrightarrow F_\infty$ be the induced order zero complete order embedding from \cref{prop: bdd below}, $h$ be the element in $\mathcal{M}(\Cstar(\Psi(A)))$ from the structure theorem for order zero maps (\cref{thm: structure thm}), and $\bar{h}\in (F_\infty)_+^1$ be an element guaranteed by \cref{prop: vN quotient} so that $\bar{h}\Psi(a)=h\Psi(a)$ for all $a\in A$. 
We fix a lift $(h_n)_n\in (\prod F_n)_+^1$ of $\bar{h}$ and note that since 
 \begin{align*}\|\Psi(b)\Psi(a)-\bar{h}\Psi(a)\|&\overset{\eqref{eq: h3}}{=}\|\bar{h}\Psi(ba)-\bar{h}\Psi(a)\|\\ 
 &\overset{\phantom{\eqref{eq: h3}}}{=}\|\bar{h}\Psi(ba-a)\|\\
 &\overset{\eqref{eq: h2}}{=} \|ba-a\|\end{align*} 
 for all $a,b\in A$, 
we have that for any $a,b\in A$ and $\varepsilon>0$ there exists $M>0$ so that for all $m\geq M$
\begin{align}\label{eq: approx 1}
    \|\psi_m(b)\psi_m(a)-h_m\psi_m(a)\|< \|ba-a\|+ \varepsilon.
\end{align}

Now we are ready to determine an appropriate subsystem of approximations. Fix a countable dense subset $\{a_k\}_k\subset A^1$ and a strictly decreasing sequence $(\varepsilon_n)_n\in \ell^1(\N)_+^1$.
  By Remark \ref{rmk: summable}(iii) we may choose $j_0>0$ so that 
    \begin{align*}
    \|\varphi_{j_0}(1_{F_{j_0}})a_0-a_0\|<\varepsilon_0/2.\end{align*} 
Then we can choose $j_1>j_0$ so that for all $m\geq j_1$ and $i\in \{1,2\}$
\begin{align*}
    \|\psi_m(\varphi_{j_0}(1_{F_{j_0}}))\psi_m(a_0)-h_m\psi_m(a_0)\|&\overset{\eqref{eq: approx 1}}{<}\|\varphi_{j_0}(1_{F_{j_0}})a_0-a_0\|+ \varepsilon_0/2 \\
    &\overset{\phantom{\eqref{eq: approx 1}}}{<}\varepsilon_0,\\
    \max_{0\leq k\leq 1}\|\varphi_{j_1}(1_{F_{j_1}})a_k-a_k\|&\overset{\phantom{\eqref{eq: approx 1}}}{<}\varepsilon_1/2, \text{ and}\\
     \|\varphi_{j_1}(\psi_{j_1}(\varphi_{j_0}(1_{F_{j_0}})^i))-\varphi_{j_0}(1_{F_{j_0}})^i\|&\overset{\phantom{\eqref{eq: approx 1}}}{<}\varepsilon_1^2/3. 
\end{align*}
For all $n>0$ we may inductively choose $j_n>j_{n-1}$ so that for all $m\geq j_n$ and $i\in \{1,2\}$,
\begin{align}
 \max_{0\leq k <n}  \|\psi_m(\varphi_{j_{n-1}}(1_{F_{j_{n-1}}}))\psi_m(a_k)-h_m\psi_m(a_k)\|&<\varepsilon_{n-1}, \label{eq: first approx1}\\
 \max_{0\leq k\leq n} \|\varphi_{j_n}(1_{F_{j_n}})a_k-a_k\|&<\varepsilon_n/2,  \text{ and} \label{eq: first approx2}\\
    \|\varphi_{j_n}(\psi_{j_n}(\varphi_{j_{n-1}}(1_{F_{j_{n-1}}})^i))-\varphi_{j_{n-1}}(1_{F_{j_{n-1}}})^i\|&<\varepsilon_n^2/3.  
    \label{eq: first approx3}
\end{align}
 
Let $\hat{\Psi}\colon A\longrightarrow \prod F_{j_n}/\textstyle{\bigoplus} F_{j_n}$ denote the map induced by the subsystem $(A\xlongrightarrow{\psi_{j_n}}F_{j_n}\xlongrightarrow{\varphi_{j_n}}A)_n$. 
 Then $\hat{\Psi}$ is still an order zero complete order embedding, and 
 for $\hat{\bar{h}}=[(h_{j_n})_n]\in \prod F_{j_n}/\textstyle{\bigoplus} F_{j_n}$ we still have $\hat{\bar{h}}\hat{\Psi}(a)=\hat{h}\hat{\Psi}(a)$ for all $a\in A$, where $\hat{h}\in \mathcal{M}(\Cstar(\hat{\Psi(A)}))$ is the element guaranteed by \cref{thm: structure thm}.
 Thus we may pass to the (summable) subsystem $(A\xlongrightarrow{\psi_{j_n}}F_{j_n}\xlongrightarrow{\varphi_{j_n}}A)_n$, drop the subscripts, and write $\Psi$, $h$, and $\bar{h}$ for $\hat{\Psi}$, $\hat{h}$, and $\hat{\bar{h}}$, respectively. 
Now \eqref{eq: first approx1}--\eqref{eq: first approx3} say that for any $n>0$, we have for for $m> n$ and $i\in\{1,2\}$
 \begin{align}
  \max_{0\leq k \leq n} \|\psi_m(\varphi_{n}(1_{F_{n}}))\psi_m(a_k)-h_m\psi_m(a_k)\|&<\varepsilon_{n}, 
  \label{eq: second approx1}\\
   \max_{0\leq k\leq n}\|\varphi_{n}(1_{F_{n}})a_k-a_k\|&<\varepsilon_n/2, \text{ and}\label{eq: second approx2}\\
  \|\varphi_n(\psi_n(\varphi_{n-1}(1_{F_{n-1}})^i))-\varphi_{n-1}(1_{F_{n-1}})^i\|&<\varepsilon_n^2/3. 
  \label{eq: second approx3}
\end{align}
We aim to show that the associated system $(F_n,\rho_{n+1,n})_n$ as in Definition \ref{def: summable}(v) is $\CPCstar$.\\

\noindent {\bf Claim 1:} 
For all $a\in A$, we have $[(\rho_{n+1,n}(1_{F_n}))_n] \Psi(a)=\bar{h}\Psi(a)$, and \eqref{eq: h1}, \eqref{eq: h2}, and \eqref{eq: h3} also hold for $[(\rho_{n+1,n}(1_{F_n}))_n]$ in place of $\bar{h}$. 

We check the claim on an element $a_k$ from our fixed dense subset of $A^1$. Since
\begin{align}
&\|\rho_{n,n-1}(1_{F_{n-1}}) \psi_n(a_k)-h_n\psi_n(a_k)\| \label{eq: Claim 1} \\
&\overset{\phantom{\eqref{eq: second approx1}}}{=}\|\psi_n(\varphi_{n-1}(1_{F_{n-1}})) \psi_n(a_k)-h_n\psi_n(a_k)\| \nonumber\\
&\overset{\eqref{eq: second approx1}}{<}\varepsilon_{n-1}\nonumber 
\end{align}
for all $n>k$, it follows that $[(\rho_{n+1,n}(1_{F_n}))_n]\Psi(a)=\bar{h}\Psi(a)$ for all $a\in A$, which establishes the claim by \cref{prop: vN quotient} with $\bar{e}=[(\rho_{n+1,n}(1_{F_n}))_n]$. 

\medskip

\noindent {\bf Claim 2:} For any $a\in A$ and $\varepsilon>0$, there exists an $M>0$ so that for $m>n>M$ 
\begin{align*}
\|\rho_{m,n}(1_{F_n})\psi_m(a)-\rho_{m,m-1}(1_{F_{m-1}})\psi_m(a)\|<\varepsilon.\end{align*}

Again, we check the claim on some $a_k$ from our fixed dense subset of $A^1$. Fix $\varepsilon>0$. It follows from summability 
(see Remark \ref{rmk: summable}(i)) that we may choose $M>k$ with $\varepsilon_M<\varepsilon/3$ so that $\|\rho_{m,n}-\psi_m\circ\varphi_n\|<\varepsilon/3$ for all $m>n>M$. 
Then it follows from \eqref{eq: second approx1} and \eqref{eq: Claim 1} that for all $m>n>M$, 
\begin{align*}
    &\|\rho_{m,n}(1_{F_n})\psi_m(a_k)-\rho_{m,m-1}(1_{F_{m-1}})\psi_m(a_k)\|\\
    &<  \|\psi_m(\varphi_{n}(1_{F_{n}}))\psi_m(a_k)-\rho_{m,m-1}(1_{F_{m-1}})\psi_m(a_k)\| + \varepsilon/3\\
     &\leq  
     \|\psi_m(\varphi_{n}(1_{F_{n}}))\psi_m(a_k)-h_m\psi_m(a_k)\| \\
    & \phantom{{}\leq {}
     } 
     + \|h_m\psi_m(a_k)-\rho_{m,m-1}(1_{F_{m-1}})\psi_m(a_k)\| +\varepsilon/3 
     \\
 &<  \varepsilon_{n}+\varepsilon_{m-1}+\varepsilon/3\\
 &<\varepsilon.
\end{align*}

\medskip

\noindent {\bf Claim 3:} For any $a,b\in A$ and $\varepsilon>0$, there exists an $M>0$ so that $m> n>M$
\begin{align*}
    \|\psi_m(a)\psi_m(b)-\rho_{m,n}(1_{F_n})\psi_m(ab)\|&<\varepsilon. 
\end{align*}

Let $a,b\in A^1$ and $\varepsilon>0$. Claim 1 tells us that $[(\rho_{n+1,n}(1_{F_n}))_n] \Psi(ab)=\Psi(a)\Psi(b)$, and so there is an $M_0>0$ so that for all $m>M_0$ 
\begin{align*}
    \|\psi_m(a)\psi_m(b)-\rho_{m,m-1}(1_{F_{m-1}})\psi_m(ab)\|
    <\varepsilon/2.
    \end{align*}
    Then this with Claim 2 (for $ab$ in place of $a\in A$ and $\varepsilon/2$ in place of $\varepsilon$) tells us that there exists $M>M_0$ so that for all $m> n>M$, 

\begin{align*}
&\|\psi_m(a)\psi_m(b)-\rho_{m,n}(1_{F_n})\psi_m(ab)\| \\ &<\|\rho_{m,m-1}(1_{F_{m-1}})\psi_m(ab)-\rho_{m,n}(1_{F_n})\psi_m(ab)\| + \varepsilon/2\\
&<\varepsilon.
\end{align*}

\noindent {\bf Claim 4:}
For any $a\in A$ and $\varepsilon>0$, there exists an $M>0$ so that for all $m>n>M$, 
\begin{align*}
    \|\rho_{m,n}\big(\rho_{n,n-1}(1_{F_{n-1}})\psi_n(a)\big)-\psi_m(a)\|&<\varepsilon. 
\end{align*}

Again, we check the claim on some $a_k$ from our fixed dense subset of $A^1$. Fix $1>\varepsilon>0$. 
By \eqref{eq: second approx3} and \cite[Lemma 7.11]{ER02}, for each $n>0$

\begin{align*}
    &\|\varphi_n\big(\rho_{n,n-1}(1_{F_{n-1}})\psi_n(a_k)\big)-\varphi_n(\rho_{n,n-1}(1_{F_{n-1}}))\ \varphi_n(\psi_n(a_k))\|\\ 
   & <\big(3\varepsilon_n^2/3\big)^{1/2}\\
   &=\varepsilon_n.
\end{align*}
We now can choose $M>k$ with $\varepsilon_M<\varepsilon/6$ so that for all $m>n>M$,

\begin{align}
    &\|\varphi_n\big(\rho_{n,n-1}(1_{F_{n-1}})\psi_n(a_k)\big)-\varphi_n(\rho_{n,n-1}(1_{F_{n-1}}))\ \varphi_n(\psi_n(a_k))\|\label{eq: claim 4.1}\\
    &<\varepsilon_n \nonumber\\
    &<\varepsilon/6,\nonumber 
    \end{align}
    \begin{align}
    \|\varphi_n(\psi_n(a_k))-a_k\|&<\varepsilon/6,\label{eq: claim 4.2}
    \end{align}
    and
    \begin{align}
    \|\rho_{m,n}(\psi_n(a_k))-\psi_m(a_k)\|&\leq \|\varphi_{m-1}\circ\rho_{m-1,n}(\psi_n(a_k))-a_k\| \label{eq: claim 4.3}\\
    &<\varepsilon/2. \nonumber
\end{align}

Then we have for all $m>n>M$, 

\begin{align*}
    & \|\rho_{m,n}\big(\rho_{n,n-1}(1_{F_{n-1}})\psi_n(a_k)\big)-\psi_m(a_k)\|\\  
    &\overset{\phantom{\eqref{eq: second approx3}}}{\leq} 
    \|\rho_{m,n}\big(\rho_{n,n-1}(1_{F_{n-1}})\psi_n(a_k)\big)-\rho_{m,n+1}(\psi_{n+1}(a_k))\|\\
    &\phantom{{}\overset{\eqref{eq: second approx3}}{\leq}{}}+\|\rho_{m,n+1}(\psi_{n+1}(a_k))-\psi_m(a_k)\|
    \\
&\overset{\eqref{eq: claim 4.3}}{<} \varepsilon/2 + \|\rho_{m,n}\big(\rho_{n,n-1}(1_{F_{n-1}})\psi_n(a_k)\big)-\rho_{m,n+1}(\psi_{n+1}(a_k))\|\\
    &\overset{\phantom{\eqref{eq: second approx3}}}{=}  \varepsilon/2 + \|\rho_{m,n+1}\circ\psi_{n+1}\big(\varphi_n\big(\rho_{n,n-1}(1_{F_{n-1}})\psi_n(a_k)\big)-a_k\big)\|\\
    &\overset{\phantom{\eqref{eq: second approx3}}}{\leq}  \varepsilon/2 + \|\varphi_n\big(\rho_{n,n-1}(1_{F_{n-1}})\psi_n(a_k)\big)-a_k\|\\
  &\overset{\eqref{eq: claim 4.1}}{<} \varepsilon/2 +\varepsilon/6 + \|\varphi_n(\rho_{n,n-1}(1_{F_{n-1}}))\  \varphi_n(\psi_n(a_k))-a_k\|\\
  &\overset{\eqref{eq: claim 4.2}}{<} 2\varepsilon/3 + \varepsilon/6 + \|\varphi_n(\rho_{n,n-1}(1_{F_{n-1}}))a_k-a_k\|\\
   &\overset{\phantom{\eqref{eq: second approx3}}}{\leq}  5\varepsilon/6+ \|\varphi_n(\rho_{n,n-1}(1_{F_{n-1}}))a_k-\varphi_{n-1}(1_{F_{n-1}})a_k\|\\ 
   &\phantom{{}\overset{\eqref{eq: second approx3}}{\leq}{}}+ \|\varphi_{n-1}(1_{F_{n-1}})a_k-a_k\|\\
   &\overset{\eqref{eq: second approx3}}{<} 5\varepsilon/6+ \varepsilon_n^2/3+\|\varphi_{n-1}(1_{F_{n-1}})a_k-a_k\|\\
   &\overset{\eqref{eq: second approx2}}{<} 5\varepsilon/6+ \varepsilon_n^2/3+\varepsilon_{n-1}/2\\ &\overset{\phantom{\eqref{eq: second approx3}}}{<}\varepsilon.
\end{align*} 

\xdef\tpd{\the\prevdepth}
This finishes the proof of the claim.

\bigskip 

Now we are ready to verify that $(F_n,\rho_{n+1,n})_n$ is a $\CPCstar$-system.  
Let $k\geq 0$, $x,y\in F^1_k$, and $\varepsilon>0$. Define $a\coloneqq \lim_n\varphi_n(\rho_{n,k}(x))\in A$ and $b\coloneqq \lim_n\varphi_n(\rho_{n,k}(y))\in A$. (Recall that these limits exist by Remark \ref{rmk: summable}(i).) We then in particular have
\begin{align*}
\lim_{n}\|\rho_{n,k}(x)\rho_{n,k}(y)-\psi_n(a)\psi_n(b)\|= 0.
\end{align*}
Combining this with Claims 3 and 4, we can choose $M>0$ so that for all $m>n\geq M$,
\begin{align*}
 \|\rho_{m,k}(x)\rho_{m,k}(y)-\rho_{m,n}(1_{F_n})\psi_m(ab)\|&<\varepsilon/4\ \text{ and} 
 \\
 \|\rho_{m,n}\big(\rho_{n,n-1}(1_{F_{n-1}})\psi_n(ab)\big)-\psi_m(ab)\|&<\varepsilon/2. 
\end{align*}
Using 
these two approximations, we compute for $m>n,l>M$,

\begin{align*}
    &\|\rho_{m,l}(1_{F_l})\rho_{m,n}\big(\rho_{n,k}(x)\rho_{n,k}(y)\big)-\rho_{m,k}(x)\rho_{m,k}(y)\|\\ 
    &\leq
    \|\rho_{m,l}(1_{F_l})\  \rho_{m,n}\big(\rho_{n,k}(x)\rho_{n,k}(y)-\rho_{n,n-1}(1_{F_{n-1}})\psi_n(ab)\big)\| \\
    &\phantom{{}\leq{}} + \|\rho_{m,l}(1_{F_l})\ \rho_{m,n}\big(\rho_{n,n-1}(1_{F_{n-1}})\psi_n(ab)\big)-\rho_{m,l}(1_{F_l})\  \psi_m(ab)\| \\
   &\phantom{{}\leq{}}+\|\rho_{m,l}(1_{F_l})\psi_m(ab)-\rho_{m,k}(x)\rho_{m,k}(y)\|
   \\
   &\leq  
   \|\rho_{n,k}(x)\rho_{n,k}(y)-\rho_{n,n-1}(1_{F_{n-1}})\psi_n(ab)\| \\
   &\phantom{{}\leq{}} + \|\rho_{m,n}\big(\rho_{n,n-1}(1_{F_{n-1}})\psi_n(ab)\big)-\psi_m(ab)\| \\
   &\phantom{{}\leq{}}+\|\rho_{m,l}(1_{F_l})\psi_m(ab)-\rho_{m,k}(x)\rho_{m,k}(y)\| 
   \\
&< \varepsilon. 
\end{align*}

Finally, since our subsystem is still summable, $\Psi(A)=\Lim $ by  \cref{prop: bdd below}, so by  \cref{prop: CPC*-algebras'} $A$ is $^*$-isomorphic to a $\CPCstar$-limit. 
    \end{proof}

    \begin{remark}\label{rmk: Upsilon}
    If $A$ is a separable unital nuclear $\Cstar$-algebra with a summable system $(A\xlongrightarrow{\psi_n}F_n\xlongrightarrow{\varphi_n}A)_n$ of c.p.c.\ approximations with approximately order zero downwards maps, then $(\varphi_n(1_{F_n}))_n$ converges to $1_A$, and hence $\Psi(1_A)=\lim_n\rho_n(1_{F_n})=e$. In this case, the associated system $(F_n,\rho_{n+1,n})_n$ will automatically be a $\CPCstar$-system even without passing to a subsystem. 
    \end{remark}

We close this section with a description of the product of a nuclear $\Cstar$-algebra in terms of an associated $\CPCstar$-system, interpreting \cref{prop: CPC*-algebras'} along the lines of \cref{thm: main 1}.
\begin{corollary}
Let $(A\xlongrightarrow{\psi_n}F_n\xlongrightarrow{\varphi_n}A)_n$ be a summable system of c.p.c.\ approximations of a separable nuclear $\Cstar$-algebra $A$ with approximately order zero downwards maps so that the associated system $(F_n,\rho_{n+1,n})_n$ is a $\CPCstar$-system. Let $\Psi\colon A\longrightarrow F_\infty$ be the induced order zero complete order embedding. Then for any $k\geq 0$, $x,y\in F_k$, and $a=\lim_n\varphi_n(\rho_{n,k}(x))$, $b=\lim_n\varphi_n(\rho_{n,k}(y))\in A$, we have 
$\Psi(ab)=\Psi(a)\sbt\Psi(b)=\lim_n\rho_n(\rho_{n,k}(x)\rho_{n,k}(y))$. 
\end{corollary}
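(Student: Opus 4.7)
\medskip

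\noindent The plan is to deduce both equalities by a direct combination of results already established in \cref{sect: cpc systems} and \cref{sect: cpap}, so there is no significant obstacle to overcome.

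\medskip

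\noindent First I would handle the second equality
\[
\Psi(a)\sbt \Psi(b)=\lim_n\rho_n(\rho_{n,k}(x)\rho_{n,k}(y)).
\]
By \eqref{eq: approx coh} in the proof of \cref{prop: bdd below} (which applies because the system is summable), we know that $\rho_k(x)=\Psi(a)$ and $\rho_k(y)=\Psi(b)$. Plugging this into the defining formula \eqref{eq: explicit mult} of the multiplication $\sbt$ from \cref{thm: main 1} immediately yields
\[
\Psi(a)\sbt\Psi(b)=\rho_k(x)\sbt\rho_k(y)=\lim_n\rho_n(\rho_{n,k}(x)\rho_{n,k}(y)),
\]
as desired.

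\medskip

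\noindent Next I would establish the first equality $\Psi(ab)=\Psi(a)\sbt\Psi(b)$. By \cref{thm: main 2}, the composition $\Theta^{-1}\circ \Psi\colon A\longrightarrow \Cstar_\sbt(\Lim)$ is a $^*$-isomorphism and hence multiplicative, so
\[
\Theta^{-1}(\Psi(ab))=\Theta^{-1}(\Psi(a))\sbt \Theta^{-1}(\Psi(b)).
\]
Since $\Theta=\id_{\Lim}$ acts as the identity on underlying elements (it only reinterprets them as living in $F_\infty$ rather than in $\Cstar_\sbt(\Lim)$), applying $\Theta$ to both sides yields $\Psi(ab)=\Psi(a)\sbt \Psi(b)$ in $\Lim$.

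\medskip

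\noindent As a sanity check (and an alternative route which avoids invoking \cref{thm: main 2} directly), one can verify this via the nondegeneracy of $e$: by \eqref{eq: mult id} we have $e(\Psi(a)\sbt\Psi(b))=\Psi(a)\Psi(b)$ in $F_\infty$, while Claim~1 in the proof of \cref{thm: main 2} guarantees $e\Psi(ab)=\bar h\Psi(ab)=\Psi(a)\Psi(b)$. Thus $e$ multiplies $\Psi(ab)$ and $\Psi(a)\sbt\Psi(b)$ to the same element of $F_\infty$, and \cref{lem: uniform order unit}\ref{lemit: isometric} forces $\Psi(ab)=\Psi(a)\sbt\Psi(b)$. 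Either route completes the proof.
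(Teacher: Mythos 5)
Your argument is correct and takes essentially the same route as the paper: identify $\Psi(a)=\rho_k(x)$ and $\Psi(b)=\rho_k(y)$ via summability as in \eqref{eq: approx coh}, use multiplicativity of the $^*$-isomorphism $\Theta^{-1}\circ\Psi$ together with $\Theta=\id$, and conclude with the explicit formula \eqref{eq: explicit mult}. One small citation point: since the corollary assumes the given system itself is already a $\CPCstar$-system, the isomorphism $\Theta^{-1}\circ\Psi$ is more precisely obtained from \cref{prop: bdd below} combined with \cref{prop: CPC*-algebras'} rather than from \cref{thm: main 2}, whose conclusion is phrased only after possibly passing to a subsystem (the same caveat applies to your alternative route via Claim 1 of that proof) — but this is a matter of referencing, not a gap.
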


 \begin{proof}
 Like in \eqref{eq: approx coh} we have $\Psi(a)=\rho_k(x)$ and $\Psi(b)=\rho_k(y)$, and so by Propositions \ref{thm: main 1} and \ref{thm: order zero coi}, 
 \begin{align*}
     \Psi(ab)&=\Theta^{-1}(\Psi(ab))\\
     &= \big( \Theta^{-1}\circ\Psi(a)\big)\sbt \big(\Theta^{-1}\circ\Psi(b)\big)\\ &=\rho_k(x)\sbt \rho_k(y)\\ &=\lim_n\rho_n(\rho_{n,k}(x)\rho_{n,k}(y)). \qedhere
 \end{align*}
 \end{proof}

\section{NF systems and $\CPCstar$-systems}\label{sect: NF}
\noindent We conclude by comparing our $\CPCstar$-systems with Blackadar and Kirchberg's NF systems from \cite{BK97} and establish that the latter may indeed be regarded as a special case of the former.

\begin{definition}[{\cite[Definitions 2.1.1 and 5.2.1]{BK97}}]\label{def: NF}
A system $(F_n,\rho_{n+1,n})_n$ consisting of finite dimensional $\Cstar$-algebras and c.p.c.\ maps as in Definition \ref{def: limit} is called an \emph{NF system} if the maps $\rho_{m,n}$ are \emph{asymptotically multiplicative}, in the sense that for all $k\geq 0$, $x,y\in F_k$, and $\varepsilon>0$, there exists $M>k$ so that for all $m>n>M$, 
\[ \|\rho_{m,n}(\rho_{n,k}(x)\rho_{n,k}(y))-\rho_{m,k}(x)\rho_{m,k}(y)\|<\varepsilon.\]
The limit of this system (in the sense of \cref{def: limit}) is a $\Cstar$-algebra with multiplication given for all $k\geq 0$ and $x,y\in F_k$ by
\[\rho_k(x)\rho_k(y)=\lim_n \rho_n(\rho_{n,k}(x)\rho_{n,k}(y)).\]
A $\Cstar$-algebra that is $^*$-isomorphic to the inductive limit of an NF system is called an \emph{NF algebra}.
\end{definition}

\begin{remark}
A unital $\CPCstar$-system is NF, and if $(F_n,\rho_{n+1,n})_n$ is an NF system such that for any $\varepsilon>0$ there exists $M>0$ so that for all $m>n>M$,
\begin{align*}
    \|\rho_{m,n}(1_{F_n})-1_{F_m}\|<\varepsilon,
\end{align*}
then it is also a $\CPCstar$-system. With \cref{cor: NF has cpcstar subsystem} below, one can drop this condition upon passing to a subsystem. 
\end{remark}

The following excerpt from \cite[Theorem 5.2.2]{BK97} characterizes nuclear and quasidiagonal $\Cstar$-algebras in terms of NF systems. In comparison with \cref{thm: main 2}, here one asks for approximately multiplicative downwards maps as opposed to just approximately order zero ones; this corresponds to adding quasidiagonality to the nuclearity hypothesis.

\begin{theorem}
For a separable $\Cstar$-algebra $A$ the following are equivalent:
\begin{enumerate}[label=\textnormal{(\roman*)}]
    \item $A$ is an NF algebra. 
    \item $A$ is nuclear and quasidiagonal.
    \item There exists a system $(A\xlongrightarrow{\psi_n} F_n \xlongrightarrow{\varphi_n} A)_n$ of c.p.c.\ approximations with approximately multiplicative downwards maps.
\end{enumerate}
\end{theorem}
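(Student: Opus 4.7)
The plan. The implications (i)$\Rightarrow$(ii) and (ii)$\Rightarrow$(iii) are classical and form the content of \cite[Theorem 5.2.2]{BK97} combined with Voiculescu's characterisation of quasidiagonality \cite{Voi91} (a $\Cstar$-algebra is quasidiagonal iff it admits c.p.c.\ maps into matrix algebras that are asymptotically isometric and asymptotically multiplicative), so I would invoke those results directly. The genuinely new implication is (iii)$\Rightarrow$(i), which I would handle by a careful refinement of the proof of \cref{thm: main 2}: approximate multiplicativity strictly strengthens the approximate order zero property exploited there, and this strengthening should translate into the stronger NF-relation at the level of the connecting maps.

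For (iii)$\Rightarrow$(i), note first that approximately multiplicative downwards maps are in particular approximately order zero, so \cref{thm: main 2} applies and, after passing to a subsystem, produces a $\CPCstar$-system $(F_n,\rho_{n+1,n})_n$ with $A\cong\Cstar_{\sbt}(\Lim)$. The remaining task is to upgrade the $\CPCstar$-relation \eqref{eq: asym perp} to the NF-relation of \cref{def: NF}, i.e.\ to erase the factor $\rho_{m,l}(1_{F_l})$. I would revisit the proof of \cref{thm: main 2}: under the stronger hypothesis of approximate multiplicativity, the element $\bar h\in (F_\infty)_+^1$ of \cref{prop: vN quotient} should act as a genuine multiplicative unit on $\Psi(A)$, in the sense that $\bar h\Psi(a)=\Psi(a)$ for every $a\in A$. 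This strengthens Claim 3 in that proof to the unit-free statement
\[
\|\psi_m(a)\psi_m(b)-\psi_m(ab)\|<\varepsilon
\]
for $m$ large, and combining this with Claim 4 precisely as in the concluding computation of \cref{thm: main 2} yields the NF-relation $\|\rho_{m,n}(\rho_{n,k}(x)\rho_{n,k}(y))-\rho_{m,k}(x)\rho_{m,k}(y)\|<\varepsilon$ directly.

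The main obstacle is verifying that $\bar h\Psi(a)=\Psi(a)$ for all $a\in A$, since this is the ingredient that collapses the $\CPCstar$-relation onto the NF-relation. For $a\in A$, approximate multiplicativity gives $\|\psi_n(u_\lambda)\psi_n(a)-\psi_n(u_\lambda a)\|\to 0$ along any approximate unit $(u_\lambda)_\lambda$ of $A$; combined with Remark \ref{rmk: summable}(iii) and the formula $\bar h\Psi(a)=h\Psi(a)=\text{s.o.-}\lim_\lambda \Psi(u_\lambda)\Psi(a)$ coming from \cref{thm: structure thm}, this should force $\bar h\Psi(a)=\Psi(a)$. Once this step is in hand, the rest of the argument follows the template of \cref{thm: main 2} with no further surprises, and \cref{prop: CPC*-algebras'} then realises $A$ as the limit of the resulting NF system.
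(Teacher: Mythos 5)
The paper does not actually prove this statement: it is quoted verbatim as an excerpt of \cite[Theorem 5.2.2]{BK97}, so your proposal takes a genuinely different route by re-deriving the one implication (iii)$\Rightarrow$(i) internally from the $\CPCstar$-machinery while citing \cite{BK97} and \cite{Voi91} for the rest, as the paper implicitly does. Your route is sound and conceptually attractive: it exhibits the Blackadar--Kirchberg characterization as a specialization of \cref{thm: main 2} (approximate multiplicativity collapses the unit factor in \eqref{eq: asym perp} to the NF-relation), which nicely complements \cref{cor: NF has cpcstar subsystem}, whereas the paper's bare citation buys brevity and independence from the summability/subsystem bookkeeping. Two points to tighten, both routine. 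First, your key claim $\bar h\Psi(a)=\Psi(a)$ is correct, but the approximate-unit argument is a detour: approximate multiplicativity gives $\Psi(a)\Psi(b)=\Psi(ab)$ in $F_\infty$, so \eqref{eq: h3} yields $\bar h\Psi(ab)=\Psi(ab)$, and since every element of $A$ factors as a product this is the claim; note also that approximate multiplicativity passes to the subsystem extracted in \cref{thm: main 2}, which you need in order to use Claim 1 there. Second, ``combining with Claim 4 precisely as in the concluding computation'' is not literal: Claim 4 still carries the factor $\rho_{n,n-1}(1_{F_{n-1}})$, so you either need its unit-free analogue $\|\rho_{m,n}(\psi_n(c))-\psi_m(c)\|\to 0$ uniformly in $m>n$ as $n\to\infty$ (this is exactly the summability estimate \eqref{eq: claim 4.3}), or you keep Claim 4 and use Claim 1 together with your unit claim to replace $\psi_n(ab)$ by $\rho_{n,n-1}(1_{F_{n-1}})\psi_n(ab)$; either way the chain
\begin{align*}
\rho_{m,n}\big(\rho_{n,k}(x)\rho_{n,k}(y)\big)\approx\rho_{m,n}(\psi_n(ab))\approx\psi_m(ab)\approx\psi_m(a)\psi_m(b)\approx\rho_{m,k}(x)\rho_{m,k}(y)
\end{align*}
closes the argument, and \cref{prop: bdd below} with \cref{prop: CPC*-algebras'} identifies the resulting NF limit with $A$. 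So the only gaps are expository, not mathematical.
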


\begin{theorem}\label{cor: NF has cpcstar subsystem}
Any NF system admits a $\CPCstar$-subsystem.
\end{theorem}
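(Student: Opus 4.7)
Let $(F_n,\rho_{n+1,n})_n$ be an NF system with inductive limit $A = \overline{\bigcup_n \rho_n(F_n)} \subset F_\infty$. The $\CPCstar$ condition \eqref{eq: asym perp} differs from NF asymptotic multiplicativity only by an extra factor $\rho_{m,l}(1_{F_l})$ on the left, so my plan is to pass to a subsystem in which this factor behaves like an approximate identity on the relevant elements. The crucial input, established in \cite{BK97} and parallel to \cref{cor: rho 1 approx id}, is that $(\rho_n(1_{F_n}))_n$ is an approximate unit for $A$ in $F_\infty$-norm: for each $k\geq 0$, $x\in F_k$, and $\varepsilon>0$, there exists $L$ with $\|\rho_l(1_{F_l})\rho_k(x) - \rho_k(x)\|_{F_\infty} < \varepsilon$ for all $l\geq L$; since this norm is a $\limsup$ in $m$, each such fixed $l$ admits a further threshold $M'$ with the pointwise bound $\|\rho_{m,l}(1_{F_l})\rho_{m,k}(x) - \rho_{m,k}(x)\| < \varepsilon$ for $m\geq M'$.

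With this in hand, the triangle inequality
\begin{align*}
&\|\rho_{m,l}(1_{F_l})\rho_{m,n}(\rho_{n,k}(x)\rho_{n,k}(y)) - \rho_{m,k}(x)\rho_{m,k}(y)\|\\
&\leq \|\rho_{m,n}(\rho_{n,k}(x)\rho_{n,k}(y)) - \rho_{m,k}(x)\rho_{m,k}(y)\| + \|y\|\cdot\|\rho_{m,l}(1_{F_l})\rho_{m,k}(x) - \rho_{m,k}(x)\|
\end{align*}
reduces the $\CPCstar$ inequality to the NF inequality (first summand) and an approximate identity bound (second summand). I would then produce the subsystem by a diagonal argument: fix $\varepsilon_j \searrow 0$, a countable dense subset $(d_{n,i})_i$ of each $(F_n)^1$, and an enumeration of test triples, and inductively choose $n_0 < n_1 < \cdots$ so that at stage $j$ the index $n_j$ is large enough to enforce both bounds with error $\varepsilon_j$ for the finitely many data accumulated by that stage. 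The NF bound is arranged via NF asymptotic multiplicativity applied to the finitely many pairs in play; the approximate identity bound is arranged by taking $n_j$ past the finitely many exceptional $m$'s for each previously chosen $l = n_{i'}$ with $i' < j$ that is already large enough for the associated $F_\infty$-norm to fall below $\varepsilon_j$.

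For the final $\CPCstar$ verification, given $(k, x, y, \varepsilon)$, I would take a subsystem threshold $J$ large enough that $(k,x,y)$ has appeared in the enumeration, $\varepsilon_J < \varepsilon/3$, and $n_J \geq L(k, x, \varepsilon/(3\|y\|))$; then for subsystem indices past $J$ both summands in the triangle-inequality estimate fall below $\varepsilon/2$, and the bound extends to all of $F_{n_k}$ by density and continuity of multiplication. The main obstacle is decoupling the $l$ and $n$ variables in \eqref{eq: asym perp}: the approximate identity bound is a $\limsup$ in $m$ for each fixed $l$, so subsystem indices must jump past finitely many exceptional $m$'s for each previously chosen $l = n_{i'}$ and each accumulated pair $(k,x)$, which is exactly what the diagonal scheme accomplishes by handling only finitely many triples per stage.
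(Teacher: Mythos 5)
Your route is genuinely different from the paper's. The paper does not estimate the $\CPCstar$ defect directly: it passes to the Blackadar--Kirchberg augmented system $B_n=\bigoplus_{j\le n}F_j$ with complete-order-embedding connecting maps, uses Arveson extension to build a system of c.p.c.\ approximations of the limit $B$ with approximately multiplicative (hence order zero) downwards maps, feeds this into \cref{thm: main 2} to extract a $\CPCstar$-subsystem of the $B$-system, and then reads off \eqref{eq: asym perp} for the original $F_n$'s from the $m$-th direct summands. You instead work entirely inside the original system: the correct observation that the $\CPCstar$ defect is dominated by the NF defect plus $\|y\|\,\|\rho_{m,l}(1_{F_l})\rho_{m,k}(x)-\rho_{m,k}(x)\|$ reduces everything to showing that $(\rho_l(1_{F_l}))_l$ is an approximate unit for the NF limit, followed by a diagonal extraction to turn the $\limsup_m$ statements into the uniform ``for all $m>n,l>M$'' quantifier of \cref{def: cpcstar}. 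This is more elementary (no Arveson extension, no appeal to \cref{thm: main 2}) and it sharpens the paper's Remark 5.2, which needs $\rho_{m,n}(1_{F_n})\approx 1_{F_m}$ globally, to an approximate-unit condition on the relevant elements only.

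Two caveats. First, the approximate-unit fact is exactly where the real work sits, and you only cite it; \cref{cor: rho 1 approx id} is for $\CPCstar$-systems and cannot be invoked, and I would not rely on it being stated in \cite{BK97}. It is true, but it needs an argument: e.g.\ use the NF product formula to see that every positive contraction in the limit $A$ is approximated by elements $\rho_n(z)$ with $z\in (F_n)_+^1$, note $\rho_n(z)\le\rho_l(1_{F_l})$ for $l\ge n$, conclude that $\sup_l\rho_l(1_{F_l})=1_{A^{**}}$, and then apply the standard lemma that an increasing sequence in $A_+^1$ converging $\sigma$-weakly to $1_{A^{**}}$ is an approximate unit. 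Without some such proof the argument is incomplete. Second, your diagonal bookkeeping has a tolerance mismatch: you only enforce the pointwise bound at stage $j$ for pairs whose $F_\infty$-norm has already fallen below $\varepsilon_j$, whereas the final verification only guarantees that norm is below $\varepsilon/(3\|y\|)$, which may exceed no $\varepsilon_j$ relevant to the stage at which the bound was supposed to be installed. This is easily repaired (at stage $j$, for every accumulated pair push $n_j$ past the finitely many $m$ where the pointwise value exceeds the corresponding $F_\infty$-norm plus $\varepsilon_j$, unconditionally), and note that enforcing this at stage $j_l+1$ already covers all later original indices $m$, so the scheme does close up; but as written the quantifiers do not quite match.
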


\begin{proof}
Let $(F_n,\rho_{n+1,n})_n$ be an NF system. From this we form another NF system $(B_n,\varphi_{n+1,n})$ (with a different limit) just as in \cite[Proposition 5.1.3]{BK97} 
by taking $B_n=\bigoplus_{j=0}^n F_j$ and $\varphi_{n+1,n}\coloneqq \id_{B_n}\oplus \rho_{n+1,n}$:

\[
\begin{tikzcd}
B_0\arrow[d,phantom, "\rotatebox{90}{$=$}", description]
\arrow{r}{\varphi_{1,0}} & B_1 \arrow[d,phantom, "\rotatebox{90}{$=$}", description]
\arrow{r}{\varphi_{2,1}} & B_2 \arrow[d,phantom, "\rotatebox{90}{$=$}", description]
&\hdots 
\\
F_0\arrow[swap]{dr}{\rho_{1,0}}\arrow{r}{\id}& F_0\arrow[d,phantom, "\rotatebox{0}{$\bigoplus$}", description] \arrow{r}{\id} & F_0\arrow[d,phantom, "\rotatebox{0}{$\bigoplus$}", description]
&
\hdots\\
& F_1\arrow[swap]{dr}{\rho_{2,1}}\arrow{r}{\id} & F_1\arrow[d,phantom, "\rotatebox{0}{$\bigoplus$}", description]
&
\hdots \\
&& F_2 
& 
\hdots
\end{tikzcd} 
\] 
Then for each $k\geq 0$, $\hat{x}=(x_0,...,x_k)\in B_k$, and $n>k$, we have 
\[\varphi_{n,k}(\hat{x})= \hat{x} \oplus \oplus_{j=k+1}^n \rho_{j,k}(x_k).\] 

We claim we can find a subsystem $(B_{n_j},\varphi_{n_j+1,n_j})_j$ which is $\CPCstar$. Set $B\coloneqq{\overset{\tikz \draw [arrows = {->[harpoon]}] (-1,0) -- (0,0);}{(B_n,\varphi_n)}_n}$. Then each $\varphi_{n+1,n}$ is a complete order embedding, and hence so are the induced c.p.c.\ maps $\varphi_n\colon B_n\longrightarrow  B\subset B_\infty$. 
Using Arveson's Extension Theorem (cf.\ \cite[Lemma 5.1.2]{BK97}), we extend each c.p.c.\ contraction $\varphi_n^{-1}\colon \varphi_n(B_n)\longrightarrow  B_n$ to a c.p.c.\ map $\psi_n\colon B\longrightarrow  B_n$ with $\psi_n\circ\varphi_n=\id_{B_n}$.  
Since the union $\bigcup_n \varphi_n(B_n)$ is nested, 
$(\varphi_m\circ\psi_m)|_{\varphi_n(B_n)}=\id_{\varphi_n(B_n)}$ for all $m>n\geq 0$, and so $(B\xlongrightarrow {\psi_n}B_n\xlongrightarrow {\varphi_n}B)_n$ is a system of c.p.c.\ approximations of $B$. Note that the associated system $(B_n,\psi_{n+1}\circ\varphi_n)_n$ is exactly $(B_n,\varphi_{n+1,n})_n$: Indeed, for any $n\geq 0$ and $x\in B_n$, we have for all $m>n$
\begin{align}
    &(\psi_m\circ\varphi_{m-1}\circ\psi_{m-1}\circ\hdots\varphi_{n+1}\circ\psi_{n+1}) (\varphi_n(x))\label{eq: jump} \\ 
    &=\psi_m(\varphi_n(x)) \nonumber\\ &=\psi_m(\varphi_m(\varphi_{m,n}(x)))\nonumber \\ &=\varphi_{m,n}(x). \nonumber
\end{align}
Moreover, the downwards maps of our system $(B\xlongrightarrow {\psi_n}B_n\xlongrightarrow {\varphi_n}B)_n$ of c.p.c.\ approximations are approximately multiplicative. To see this, it suffices to consider $\varphi_k(x),\varphi_k(y)$ for some $k\geq 0$ and $x,y\in B_k$. Fix $\varepsilon>0$. Since $\bigcup_n \varphi_n(B_n)$ is dense in $B$, we may choose $N>k$ and $z\in B_N$ so that $\|\varphi_N(z)-\varphi_k(x)\varphi_k(y)\|<\varepsilon/3$. Since the system is NF, we have 
\begin{align*}
    \varphi_k(x)\varphi_k(y)&=\lim_n\varphi_n\big(\varphi_{n,k}(x)\varphi_{n,k}(y)\big)\overset{\eqref{eq: jump}}{=}\lim_n\varphi_n\big(\psi_n(\varphi_k(x))\psi_n(\varphi_k(y))\big),
\end{align*}
and so we may choose $M>N$ so that \[\|\varphi_k(x)\varphi_k(y)-\varphi_m\big(\psi_m(\varphi_k(x))\psi_m(\varphi_k(y))\big)\|<\varepsilon/3\] for all $m>M$. 
Since each $\varphi_n$ is isometric, we have for all $m>M>N$
\begin{align*}
    &\|\psi_m(\varphi_k(x)\varphi_k(y))-\psi_m(\varphi_k(x))\psi_m(\varphi_k(y))\|\\
    &< \|\psi_m(\varphi_N(z))-\psi_m(\varphi_k(x))\psi_m(\varphi_k(y))\| + \varepsilon/3 \\
  &=\|\varphi_m\big(\psi_m(\varphi_N(z))\big)-\varphi_m\big(\psi_m(\varphi_k(x))\psi_m(\varphi_k(y)))\big)\| + \varepsilon/3 \\
   &=\|\varphi_N(z)-\varphi_m\big(\psi_m(\varphi_k(x))\psi_m(\varphi_k(y))\big)\| + \varepsilon/3 \\
  &<\varepsilon.
\end{align*}
Since the downwards maps are approximately multiplicative, they are also approximately order zero, and so \cref{thm: main 2} guarantees that the associated system $(B_n,\psi_{n+1}\circ\varphi_n)_n=(B_n,\varphi_{n+1,n})_n$, has a $\CPCstar$-subsystem $(B_{n_j},\varphi_{n_{j+1},n_j})_j$. 
\vspace{.25 cm}

We claim that the system $(F_{n_j},\rho_{n_{j+1},n_j})_j$ is also $\CPCstar$. Since this will hold without passing to a further subsystem, at this point we may drop the subscripts for ease of notation.
Fix $k\geq 0$, $x,y\in F_k$, and $\varepsilon>0$. Let $\hat{x}=(0,...,0,x), \hat{y}=(0,...,0,y)\in B_k$, and choose $M>k$ so that for all $m>n,l>M$ we have 
\begin{align}
    \|\varphi_{m,l}(1_{B_l})\varphi_{m,n}\big(\varphi_{n,k}(\hat{x})\varphi_{n,k}(\hat{y})\big)-\varphi_{m,k}(\hat{x})\varphi_{m,k}(\hat{y})\|&<\varepsilon. \label{eq: NF}
\end{align}
Expanding these terms, we get
\begin{align*}
\varphi_{m,k}(\hat{x})\varphi_{m,k}(\hat{y})&=\big(\hat{x} \oplus \big(\oplus_{j=k+1}^m \rho_{j,k}(x)\big)\big)\big(\hat{y} \oplus \big(\oplus_{j=k+1}^m \rho_{j,k}(y)\big)\big)\\
&=\hat{x}\hat{y} \oplus \big(\oplus_{j=k+1}^m \rho_{j,k}(x)\rho_{j,k}(y)\big),\\
    \varphi_{m,l}(1_{B_l})&=1_{B_l}\oplus \big(\oplus_{j=l+1}^m \rho_{j,l}(1_{F_l})\big),
    \end{align*}
    and
    \begin{align*}
    &\varphi_{m,n}(\varphi_{n,k}(\hat{x})\varphi_{n,k}(\hat{y}))\\&= \varphi_{m,n}\big(\hat{x}\hat{y} \oplus \big(\oplus_{j=k+1}^n \rho_{j,k}(x)\rho_{j,k}(y)\big)\big)\\
   & =\hat{x}\hat{y} \oplus \big(\oplus_{j=k+1}^n \rho_{j,k}(x)\rho_{j,k}(y)\big) \oplus \big(\oplus_{j=n+1}^m \rho_{j,k}\big(\rho_{n,k}(x)\rho_{n,k}(y)\big)\big).
\end{align*} 
By considering only the $m^{\text{th}}$ entries of these, it follows from \eqref{eq: NF} that
\begin{align*}
    \|\rho_{m,n}(1_{F_n})\rho_{m,n}\big(\rho_{n,k}(x)\rho_{n,k}(y)\big)-\rho_{m,k}(x)\rho_{m,k}(y)\|<\varepsilon, 
\end{align*}
which shows that $(F_n,\rho_{n+1,n})_n$ is a $\CPCstar$-system.
\end{proof}


\bibliographystyle{plain}
\makeatletter\renewcommand\@biblabel[1]{[#1]}\makeatother
\bibliography{References2}

\end{document}